\numberwithin{equation}{section}
\def\Aut{\operatorname{Aut}}
\def\End{\operatorname{End}}
\def\ker{\operatorname{ker}}
\def\dim{\operatorname{dim}}
\def\id{\operatorname{id}}
\def\Stab{\operatorname{Stab}}
\def\Der{\operatorname{Der}}
\def\Spec{\operatorname{Spec}}
\def\Hom{\operatorname{Hom}}
\def\C{\mathbb{C}}
\def\R{\mathbb{R}}
\def\N{\mathbb{N}}
\def\Z{\mathbb{Z}}
\def\TT{\mathcal{T}}
\def\LL{\mathcal{L}}
\def\OO{\mathcal{O}}
\def\KK{\mathcal{K}}
\def\BB{\mathcal{B}}
\def\FF{\mathcal{F}}
\def\JJ{\mathcal{J}}
\def\MM{\mathcal{M}}
\def\NN{\mathcal{N}}
\def\PP{\mathcal{P}}
\def\II{\mathcal{I}}
\def\a{\mathfrak{a}}
\def\b{\mathfrak{b}}
\def\m{\mathfrak{m}}
\def\n{\mathfrak{n}}
\def\p{\mathfrak{p}}
\def\q{\mathfrak{q}}
\def\g{\mathfrak{g}}
\def\h{\mathfrak{h}}
\def\r{\mathfrak{r}}
\def\k{\mathfrak{k}}
\def\l{\mathfrak{l}}
\def\s{\mathfrak{s}}
\def\o{\mathfrak{o}}
\def\d{\partial}
\def\ol{\overline}
\def\Ext{\text{Ext}}
\def\ul{\underline}
\def\sub{\subseteq}
\newtheorem{thm}{Theorem}[section]
\newtheorem{cor}[thm]{Corollary}
\newtheorem{lemma}[thm]{Lemma}
\newtheorem{prop}[thm]{Proposition}
\theoremstyle{definition}
\newtheorem{definition}[thm]{Definition}
\theoremstyle{remark}
\newtheorem{remark}[thm]{Remark}
\newtheorem{example}[thm]{Example}
\numberwithin{equation}{section}
\begin{document}
	
	\title{Spherical Supervarieties}
	
	\author[Alexander Sherman]{Alexander Sherman}
	
	\maketitle
	\pagestyle{plain}

	\begin{abstract}  We give a definition of the notion of spherical varieties in the world of complex supervarieties with actions of algebraic supergroups.  A characterization of affine spherical supervarieties is given which generalizes a characterization in the classical case.  We also explain some general properties of the monoid of highest weights.  Several examples are given that are interesting in their own right and highlight differences with the classical case, including the regular representation, symmetric supervarieties, and actions of split supergroups.
	\end{abstract}

\section{Introduction}\label{sec_intro}

Let $G$ be a complex algebraic supergroup with $G_{0}$ reductive, where $G_{0}$ is the even underlying algebraic group of $G$.  We call such supergroups quasireductive.  We would like to consider supervarieties with actions of such supergroups which have an especially large amount of symmetry; namely, we would like a hyperborel subgroup (see \cref{defn_borel_group}) to have an open orbit.  For those familiar with Lie superalgebras, the notion of hyperborel subalgebra agrees with the usual notion of Borel subalgebra for many heavily studied cases, apart from queer superalgebras (see \cref{borel_rmk}).  We call such supervarietes spherical, generalizing the classical notion to the super world.

In the classical world, spherical varieties are a highly rich and well-studied class of varieties which simultaneously generalizes toric varieties, flag varieties, and symmetric spaces.  They provide connections between representation theory, combinatorics, and algebraic geometry.   Affine spherical varieties also have a close relationship with multiplicity-free spaces in symplectic geometry, and were used by F. Knop and I. Losev to prove Delzant's conjecture (\cite{losev2009proof}).  In work spanning several decades up to the mid-2010s, Bravi, Brion, Cupit-Foutou, Knop, Losev, Luna, Pezzini, Vust, and others completed the combinatorial classification of all spherical varieties.  

It is interesting to ask how spherical varieties generalize to the super world.   Classically, the first theorem giving a connection to representation theory states that an affine $G$-variety $X$ is spherical if and only if $\C[X]$ is multiplicity-free as a $G$-module.  In the super case complete reducibility is rare and so such a statement is too much to hope for.  It is seen in this article that $\C[X]$ may not be completely reducible for an affine spherical supervariety $X$; however the socle of $\C[X]$ must be multiplicity-free. 

On the flip side, and perhaps more surprising superficially, there are situations in which a $G$-supervariety $X$ is affine, $\C[X]$ is completely reducible and multiplicity-free, but $X$ is not spherical.  Thus this connection does not generalize nicely to the super world.  However, we find a characterization of sphericity in terms of the subalgebra of $\C[X]$ generated by $B$-highest weight functions, where $B$ is a hyperborel subgroup	 (\cref{spher_affine_char}).  This characterization generalizes the classical fact that an affine $G$-variety $X$ is spherical if and only if $X//N$ is a toric variety for a maximal torus $T$ of a Borel subgroup $B$, where $N$ is the unipotent radical of $B$.

The author began studying examples of spherical varieties in \cite{sherman2020spherical}.  In that work, indecomposable spherical representations were found for a large class of quasireductive groups, and the structure of the algebra of functions was determined.  That paper and this one seek to understand affine $G$-supervarieties better, in particular in understanding how the geometry of the action is connected to the representation theory of the space of functions.  

This work has been in progress for several years now by other authors within the study of symmetric superspaces.  In \cite{sahi2016capelli} and \cite{sahi2018capelli} the Capelli eigenvalue problem has been studied for supersymmetric pairs coming from simple Jordan superalgebras.  In  \cite{alldridge2012harish} a generalization of the Harish-Chandra isomorphism theorem was given, and in \cite{alldridge2015spherical} certain facts about the socle of the space of functions is proven, amongst other things.  Further, in \cite{sergeev2004deformed} the combinatorics of root systems gotten from supersymmetric pairs is used to construct integrable systems.  We hope further insight can be gained through the more general lens of spherical supervarieties.

\subsection{Structure of paper} We begin with definitions and explanations about notation for supervarieties in section 2.  In section 3 we define actions by supergroups and state some lemmas about actions we plan to use later on, and in section 4 we define quasireductive supergroups and the notion of hyperborel subgroup.  In section 5 we define spherical supervarieties and give characterization theorems and some consequences.  Then in section 6 we discuss several examples the author has considered, with new results stated.  The first appendix briefly looks at the notion of spherical actions of quasireductive Lie superalgebras on supervarieties by vector fields.  Finally, the second appendix addresses some generalities about smoothness of supervarieties.

\subsection{Acknowledgments}  The author would like to thank his advisor, Vera Serganova, for stimulating their interest in spherical supervarieties, and useful discussions along the way.  The author also thanks Alexander Alldridge for many insightful explanations and discussions about supergeometry, supergroups, and their actions.

\section{Supergeometry}

We are work in the algebraic setting.  For the basic definitions on superschemes, see chapter 10 of \cite{carmeli2011mathematical}.  Although we work over the complex numbers, one could just as well work over any algebraically closed field of characteristic zero.

\subsection{Notation and terminology} For a super vector space $V$ we write $V=V_{\ol{0}}\oplus V_{\ol{1}}$ for its parity decomposition, and $\Pi V=\C^{0|1}\otimes V$ for the parity shift, where $\C^{0|1}=0\oplus\C$.  Given a homogeneous element $v\in V$, we write $\ol{v}\in\Z/2\Z=\{\ol{0},\ol{1}\}$ for its parity.

For a superscheme $X$, write $|X|$ for the underlying topological space of $X$.  Let $\OO_X$ denote its structure sheaf and $\OO_X=(\OO_{X})_{\ol{0}}\oplus(\OO_{X})_{\ol{1}}$ for the parity decomposition of this sheaf.  For a point $x\in|X|$, we write $\OO_{X,x}$ for the stalk of the sheaf $\OO_X$ at $x$ which will be a local superalgebra, and $\m_x$ for its unique maximal ideal.  For a superalgebra $R$, we write $X(R)$ for the $\Spec R$-points of $X$.  In particular, we write $X(\C)$ for the complex points of $X$, which for the spaces we consider will be exactly the closed points.  For an open subset $|U|\sub|X|$, we write $U$ for the superscheme obtained by restriction of $X=(|X|,\OO_X)$ to $|U|$, and we call $U$ an open subscheme of $X$.

We write $X_{0}$ for the even subvariety of a superscheme $X$, that is the space cut out by the ideal sheaf $\JJ_X$ generated by $(\OO_X)_{\ol{1}}$.  Write $i_{X}:X_{0}\hookrightarrow X$ for the corresponding closed embedding, or sometimes simply $i$ if the space is clear from context.  Let $\NN_X:=\JJ_X/\JJ_X^2$ be the conormal sheaf, which is a quasicoherent sheaf on $X_{0}$. 

For a superscheme $X$ such that $|X|$ is Noetherian and irreducible, write $\C(X)$ for the stalk of $\OO_{X}$ at the generic point of $|X|$.  Then for any open subscheme $U$ of $X$ we have a natural map $\Gamma(|U|,\OO_X)\to\C(X)$.  This map may not be injective (although for us it always will be), but if $f$ is a section over $|U|$ we will sometimes speak of it as an element of $\C(X)$ with the understanding that we are talking about its image under this restriction map.

Quasi-coherent, coherent sheaves, and line bundles on a superscheme may be defined in the same way as for schemes.  Further we may take direct sums and tensor products between them to obtain new (quasi)-coherent sheaves.  For a quasicoherent sheaf $\FF$ on a superscheme $X$, and a point $x\in|X|$, we write $\FF_x$ for its stalk at $x$.  We say that $\FF$ is globally generated if the natural map
\[
\Gamma(X,\FF)\otimes_{\C}\OO_X\to \FF
\]
is a surjective map of sheaves.  Similarly, we say a subspace $V\sub\Gamma(X,\FF)$ generates $\FF$ if the morphism
\[
V\otimes_{\C}\OO_X\to \FF
\]
is a surjective map of sheaves.

\subsection{Supervarieties}

\begin{definition}
	We define a supervariety to be an irreducible superscheme $X$ over $\C$ such that the following conditions are satisfied:
	\begin{enumerate}
		\item $X$ admits a finite cover by affine open subschemes of the form $\Spec A$, where $A$ is a finitely-generated superalgebra over $\C$;
		\item For any open subscheme $U\sub X$, the map $\Gamma(U,\OO_X)\to \C(X)$ is injective;
		\item The superalgebra $\C(X)$ is an integral superdomain, that is the zero divisors of $\C(X)$ are exactly $(\C(X)_{\ol{1}})$;
		\item $X$ is separated over $\C$, that is the the diagonal morphism $X\to X\times X$ is a closed embedding.
	\end{enumerate}
\end{definition}

\begin{remark}
	\begin{itemize}
		\item If $X$ is a supervariety, then for all open sets $|U|,|U'|\sub |X|$ with $|U'|\sub |U|$, the restriction map $\Gamma(|U|,\OO_X)\to\Gamma(|U'|,\OO_X)$ is injective.  This follows from functoriality of restriction.
		\item If $A$ is a finitely generated $\C$-algebra such that $\Spec A$ is a supervariety, condition (2) implies that the zero divisors of $A_{\ol{0}}$ are all nilpotent. 
		\item Closures of supervarieties are supervarieties, as are orbit closures of actions by supergroups.
		\item If $X$ is a supervariety, $X_0$ need not be a variety in the usual sense in that it may not be integral.  A natural example arises from the action of the supergroup $G=GL(1|2)$ on $S^2\C^{1|2}$.  The orbit closure of an eigenvector for $G_0$ is a supervariety, but the underlying scheme has nilpotent functions and is not a variety.  Nevertheless, $X_0$ will be integral on an open subscheme.
	\end{itemize}
	
\end{remark}

\subsection{Quasiprojective supervarieties}
\begin{definition}
\begin{enumerate}
	\item An affine supervariety is a supervariety of the form $\Spec A$ for a superalgebra $A$.
	\item Given an $\N$-graded superalgebra $A=\bigoplus\limits_{n\in\N} A_n$, one may define $\operatorname{Proj}A$ as is done in \cite{carmeli2011mathematical}, following \cite{hartshorne2013algebraic}.  We say a supervariety is projective if it is of the form $\operatorname{Proj}A$ where $A_0=\C$ and $A_1$ is finite-dimensional.  A supervariety is quasiprojective if it is a dense open subscheme of a projective supervariety.
\end{enumerate}	
\end{definition}

\begin{remark}
	Quasi-projective supervarieties are not as pervasive as quasiprojective varieties in the category of varieties.  Indeed, there are many linear algebraic supervarieties of importance which are not projective, including `most' super Grassmanians.  (See \cite{manin2013gauge} for a discussion in the analytic setting,  or \cite{carmeli2011mathematical} for the algebraic setting.)  This is a significant hindrance in understanding such spaces and how supergroups act on them.
\end{remark}

Projective superspace $\mathbb{P}^{m|n}$ and very ample line bundles may be defined as usual (see \cite{carmeli2011mathematical}).  As in the classical setting, we have the following two important results for quasiprojective supervarieties.

\begin{lemma}
	Let $X$ be a quasiprojective supervariety and $\LL$ a very ample line bundle on $X$.  Then for every homogeneous $f\in \C(X)$, there exists $n\geq0$ and homogeneous sections $s_1,s_2\in\Gamma(X,\LL^{\otimes n})$ such that $f=s_1/s_2$.
\end{lemma}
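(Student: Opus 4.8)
The plan is to mimic the classical proof that on a quasiprojective variety every rational function is a quotient of two sections of a sufficiently high power of a very ample line bundle. First I would reduce to a convenient model: since $X$ is quasiprojective it embeds as a dense open subscheme of some projective supervariety $\ol{X}=\operatorname{Proj}A$, and replacing $\LL$ by its restriction we may assume $\LL$ extends to a very ample bundle on $\ol{X}$; since $\C(X)=\C(\ol{X})$, it suffices to prove the statement for $\ol X$. Thus without loss of generality $X=\operatorname{Proj}A$ with $A_0=\C$, $A_1$ finite-dimensional, and $\LL$ the twisting sheaf $\OO_X(1)$ (after re-grading $A$ by a Veronese so that $\OO_X(1)$ is very ample and $A$ is generated in degree one). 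Under this identification $\Gamma(X,\LL^{\otimes n})$ contains $A_n$, and the homogeneous localizations $A_{(f)}$ for $f\in A_1$ homogeneous cover $X$ by the standard affine opens $D_+(f)=\Spec A_{(f)}$.

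Next I would realize the given $f\in\C(X)$ inside one of these charts. By condition (2) in the definition of supervariety the restriction maps into $\C(X)$ are injective, so $f$ lies in $\C(X)$ which is the stalk of $\OO_X$ at the generic point; choosing any homogeneous $t\in A_1$ with $D_+(t)$ nonempty, the ring $A_{(t)}$ injects into $\C(X)$ and its fraction superfield is $\C(X)$. Hence $f$ can be written, after clearing denominators in $A_{(t)}$, as a ratio $a/b$ with $a,b$ homogeneous elements of $A$ of the \emph{same} degree $d$ (both being of the form $(\text{degree-}d\text{ element})/t^{k}$, and the $t^{k}$'s cancel). Now set $n=d$: then $a,b\in A_d\subseteq\Gamma(X,\OO_X(d))=\Gamma(X,\LL^{\otimes n})$, these are homogeneous sections of the same parity as $a,b$ respectively, and $f=a/b$ as elements of $\C(X)$, which is exactly the assertion. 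If one wants $s_1,s_2$ of a prescribed parity one notes $f$ homogeneous forces $a,b$ to be taken homogeneous with $\ol a=\ol b+\ol f$; this is automatic from the construction.

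The one genuine subtlety — and the step I expect to be the main obstacle — is making sure the super-Proj machinery behaves as in \cite{hartshorne2013algebraic}: namely that $\Gamma(X,\OO_X(n))\supseteq A_n$ (equivalently that global sections of the twisting sheaf separate points and provide an embedding after a Veronese twist), and that the odd generators of $A_1$ do not obstruct the standard covering argument. Here I would invoke the construction of $\mathbb{P}^{m|n}$ and very ample line bundles from \cite{carmeli2011mathematical}: a very ample $\LL$ by definition gives a locally closed embedding $X\hookrightarrow\mathbb{P}^{m|n}$ with $\LL\cong\iota^{*}\OO_{\mathbb{P}^{m|n}}(1)$, and $\Gamma(\mathbb{P}^{m|n},\OO(n))$ is the degree-$n$ part of the polynomial superalgebra in $m+1$ even and $n$ odd variables, which surjects onto a coordinate superalgebra $A$ of $X$ in the needed range of degrees. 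Once that is in place, the argument above is the classical one verbatim, with the only extra bookkeeping being parity, which is harmless since the whole construction is $\Z/2\Z$-graded and the relation $f=s_1/s_2$ is an identity in the integral superdomain $\C(X)$, where (by condition (3)) the even part has no nonzero zero-divisors so cancellation of the common denominator $t^k$ is legitimate.
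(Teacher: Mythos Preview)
The paper does not actually prove this lemma; it merely states it (together with the super analogue of Serre's global generation theorem) under the preface ``As in the classical setting, we have the following two important results for quasiprojective supervarieties,'' leaving the verification to the reader. Your proposal therefore supplies what the paper omits, and the route you take---reduce to a projective closure via the embedding furnished by very ampleness, identify $\C(X)$ with the super fraction ring of $A_{(t)}$ for a standard affine chart, and clear denominators to produce $a,b\in A_d$---is precisely the classical argument from \cite{hartshorne2013algebraic}, II.5, transported to the super setting, which is evidently what the paper has in mind.

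One small point you should make explicit: the element $t\in A_1$ you invert must be taken \emph{even}, since odd elements square to zero in a supercommutative algebra and hence $D_+(t)=\emptyset$ for odd $t$. With $t$ even, the denominator $b=b't^{k}$ you construct is automatically even as well, so the quotient $a/b$ is genuinely well-defined in the integral superdomain $\C(X)$ and the cancellation you perform is legitimate. With that clarification your argument is complete.
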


\begin{prop}\label{serre_thm}
	Let $X$ be a quasiprojective supervariety, $\LL$ a very ample line bundle on $X$, and $\FF$ a coherent sheaf on $X$.  Then for some $n\geq0$, $\FF\otimes_{\OO_X}\LL^{\otimes n}$ is globally generated.
\end{prop}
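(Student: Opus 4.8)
The plan is to reduce to the case of projective superspace $\mathbb{P}^{m|n}$ and then run Serre's classical argument in the super setting. First I would use that $\LL$ very ample gives a locally closed immersion $j\colon X\hookrightarrow\mathbb{P}^{m|n}$ with $j^*\OO(1)\cong\LL$; taking $\ol X$ to be the scheme-theoretic closure of the image, the remarks above show $\ol X$ is again a supervariety, and it is projective as a closed subscheme of $\mathbb{P}^{m|n}$, so $X$ is a dense open subscheme of $\ol X$ and $\ol\LL:=\OO_{\mathbb{P}^{m|n}}(1)|_{\ol X}$ is a very ample line bundle restricting to $\LL$. Next I would extend $\FF$ to a coherent sheaf $\ol\FF$ on $\ol X$ with $\ol\FF|_X\cong\FF$, using the super analogue of the fact that on a Noetherian superscheme every quasicoherent sheaf is a filtered union of its coherent subsheaves (apply this to the pushforward of $\FF$ along $X\hookrightarrow\ol X$). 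Since global generation of a sheaf passes to open subschemes, it then suffices to prove the statement for $\ol X$ projective; so I may assume $X$ is projective from the start.

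With $X$ projective, let $\iota\colon X\hookrightarrow\mathbb{P}^{m|n}$ be the closed immersion given by $\LL$. Then $\iota_*\FF$ is coherent, $\iota_*(\FF\otimes\LL^{\otimes n})\cong(\iota_*\FF)\otimes\OO(n)$ by the projection formula, and $\Gamma(\mathbb{P}^{m|n},(\iota_*\FF)\otimes\OO(n))=\Gamma(X,\FF\otimes\LL^{\otimes n})$; since $\iota^*$ is right exact and $\iota^*\iota_*\GG\cong\GG$ for sheaves $\GG$ on $X$, global generation of $(\iota_*\FF)\otimes\OO(n)$ on $\mathbb{P}^{m|n}$ yields that of $\FF\otimes\LL^{\otimes n}$ on $X$. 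So the whole problem reduces to: for every coherent sheaf $\GG$ on $\mathbb{P}^{m|n}$, the twist $\GG\otimes\OO(n)$ is globally generated for $n$ large.

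To prove this I would cover $|\mathbb{P}^{m|n}|$ by the $m+1$ standard affine charts $U_i=D_+(x_i)$ associated to the even homogeneous coordinates $x_0,\dots,x_m$ --- these cover because the odd coordinates act nilpotently on $\OO$ and hence do not affect the underlying space --- each $U_i$ being the spectrum of a finitely generated superalgebra, so $\mathbb{P}^{m|n}$ is Noetherian and $\GG|_{U_i}$ is generated by finitely many sections $s_{i1},\dots,s_{ik_i}$. The crux is the super analogue of \cite[II.5.14]{hartshorne2013algebraic}: for a Noetherian superalgebra $A$, an \emph{even} element $f\in A_{\ol 0}$, and a quasicoherent sheaf $\widetilde M$ on $\Spec A$, the natural map $M_f\to\Gamma(D(f),\widetilde M)$ is an isomorphism. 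Applying this on each $U_i$, using the trivialization $\OO(N)|_{U_i}\cong\OO_{U_i}$, I would conclude that for $N$ sufficiently large $x_i^N s_{ij}$ extends to a global section of $\GG\otimes\OO(N)$; taking $N$ large enough to work for all $m+1$ charts and all the $s_{ij}$ at once produces finitely many global sections of $\GG\otimes\OO(N)$ that generate it at every point, which is what we want.

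The main obstacle I anticipate is not Step 3 itself --- once the key localization isomorphism is in place the argument is formal --- but rather the two foundational inputs that need super-careful checking: extending a coherent sheaf from the dense open $X$ to its projective closure $\ol X$, and the statement $M_f\cong\Gamma(D(f),\widetilde M)$, where one must invert only even elements so that the grading on $\OO(n)$ and the localization are compatible (localizing at an odd element would be meaningless, but since the odd coordinates are nilpotent the even charts already cover, so this is harmless). Both are routine given Noetherianity of $\mathbb{P}^{m|n}$ and its standard charts, and are presumably already available in the references on superschemes cited above.
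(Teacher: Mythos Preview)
The paper does not give a proof of this proposition; it is stated without argument, prefaced by ``As in the classical setting'' and with an implicit pointer to \cite{carmeli2011mathematical} and \cite{hartshorne2013algebraic}. Your proposal supplies precisely the standard proof one would expect the author to have in mind: reduce to the projective case by coherently extending $\FF$ to the closure, push forward along the closed immersion into $\mathbb{P}^{m|n}$ using the projection formula, and then run Serre's chart-by-chart argument on $\mathbb{P}^{m|n}$, the key observation being that the standard opens $D_+(x_i)$ for the \emph{even} homogeneous coordinates already cover $|\mathbb{P}^{m|n}|$ since the odd coordinates are nilpotent. This is correct, and the two foundational inputs you single out---extension of a coherent sheaf across an open immersion of Noetherian superschemes, and the isomorphism $M_f\cong\Gamma(D(f),\widetilde M)$ for even $f$---go through verbatim from the classical proofs, so there is no gap.
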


\subsection{Graded supervarieties}

Let $X_0$ be a scheme and $\NN$ a coherent sheaf on $X_0$.  Then $X=(|X_0|,\Lambda^\bullet\NN)$ is a superscheme in a natural way.  
\begin{definition}\label{graded_defn}
	We say that a supervariety $X$ is graded if there exists a coherent sheaf $\NN$ on $X_0$ and an isomorphism $X\cong(|X|,\Lambda^\bullet\NN)$.  We call an isomorphism of $X$ with $(|X|,\Lambda^\bullet\NN)$ a grading of $X$.
	
	We say $X$ is locally graded if it admits a covering by open subschemes that are graded.
\end{definition}  

\begin{remark}
	\begin{itemize}\item If $X$ is graded and $X\cong(|X|,\Lambda^\bullet\NN)$, then $\NN_X\cong\NN$.  
		\item Smooth affine supervarieties are always graded (see \cite{voronov1990elements}).	Thus smooth supervarieties are always locally graded.		 
		\item Using the cohomology argument given in \cite{voronov1990elements}, one can show that the property of being locally graded is affine local.  Thus locally graded affine supervarieties are graded.
	\end{itemize}
\end{remark}

\begin{remark}[Caution]
	The term graded is sometimes used instead of super (see for instance \cite{koszul1982graded}).  Another term that has been used to mean graded is split, as in \cite{vishnyakova2011complex}.  However others (e.g.\ \cite{voronov1990elements}) have used split to mean there is a splitting of the surjective morphism $\OO_X\to\OO_{X_0}$.  We will use the term graded and hope no confusion will arise.
\end{remark}

When $X$ is graded, so that $X\cong(|X|,\Lambda^\bullet\NN_X)$, its structure sheaf becomes endowed with a $\Z$-grading according to the exterior powers of the conormal sheaf, namely $(\Lambda^\bullet\NN_X)_i=\Lambda^i\NN_X$.  However a graded supervariety $X$ has, in general, many isomorphisms with $(|X|,\Lambda^\bullet\NN)$ (see for instance \cite{koszul1994connections}).

\subsection{Tangent sheaf and tangent spaces}
\begin{definition}
	For a supervariety $X$, define the tangent sheaf $\TT_X$ as the unique sheaf defined on any affine open subscheme $U=\Spec A$ of $X$ by $\Gamma(|U|,\TT_X)=\Der(A)$, that is all (not necessarily even) $\C$-linear algebra derivations of $A$.  In this way $\TT_X$ is a coherent sheaf of Lie superalgebras on $X$, and $\Gamma(U,\TT_X)$ acts by super derivations on $\Gamma(U,\OO_X)$.
\end{definition}

\begin{definition}
	Given $x\in X(\C)$, we define the tangent space at $x$ to be the super vector space $T_xX$ given by point derivations $\delta:\OO_{X,x}\to\C$, i.e.\ maps of vector spaces such that $\delta(fg)=\delta(f)g(x)+(-1)^{\ol{\delta}\ol{f}}f(x)\delta(g)$.  Note that the minus sign is not strictly necessary since if $\ol{f}=\ol{1}$ then $f(x)=0$.
\end{definition}

\begin{remark}\label{smooth_rmk}
	We have a natural identification $T_xX\cong (\m_x/\m_x^2)^*$.  Further, there is a natural map of super vector spaces
	\[
	\TT_{X,x}\to T_xX
	\]
	given by $D\mapsto (f\mapsto D(f)(x))$.  This map is not always surjective.  We say that $X$ is smooth at $x$ if it is surjective.  See \cref{app_smooth} for a discussion of smoothness of superschemes.
\end{remark}

\section{Supergroups and their Actions}  

\subsection{Supergroups} See sections 8, 9, and 11 of \cite{carmeli2011mathematical} for more on the foundations of (algebraic) supergroups and their actions.

\begin{definition}
	An algebraic supergroup is a complex supervariety $G$ equipped with morphisms $m=m_G:G\times G\to G$, $s=s_G:G\to G$, and $e=e_G:\Spec\C\to G$ satisfying the usual commutativity conditions:
	\[
	m\circ(m\times \id_G)=m\circ(\id_G\times m), 
	\]
	\[
	m\circ (e\times \id_G)=m\circ(\id_G\times e)=\id_G,
	\]
	and
	\[
	m\circ (\id_G\times s)\circ\Delta_G=m\circ (s\times\id_G)\circ\Delta_G=e.
	\]
	where $\Delta_G:G\to G\times G$ is the diagonal embedding.  In addition, we assume throughout this article that $G$ is linear, that is affine.  
\end{definition}

\begin{definition} For $u_e\in T_eG$, construct a right-invariant vector field $u_L$ on $G$ via left infinitesimal translation by the equation
	\[
	u_L(f)=-(u_e\otimes 1)(m^*(f))
	\]
	Then the value of $u_L$ at $e$ as a tangent vector is $-u_e$.  Write $\g=\operatorname{Lie}G$ for the Lie superalgebra of right-invariant vector fields on $G$.  The restriction map $\g\to T_eG$ is an isomorphism of super vector spaces, so we will freely identify $\g$ with $T_eG$ when convenient.  Given $u_e\in T_eG$ we may also construct a left-invariant vector field on $G$ via right infinitesimal translation given by
	\[
	u_R(f)=(1\otimes u_e)(m^*(f)).
	\]
	The Lie superalgebra of left-invariant vector fields is canonically isomorphic to the lie superalgebra of right vector fields via $u_L\mapsto u_R$.  
\end{definition}

\begin{remark}
	If $G$ is an algebraic supergroup, then $G_0$ is an algebraic group in the usual sense, and we have a canonical isomorphism $\g_{\ol{0}}\cong\operatorname{Lie}(G_{0})$.
\end{remark}
\subsection{Actions}
\begin{definition}\label{def_action}
	Let $X$ be a supervariety and $G$ an algebraic supergroup.  An action of $G$ on $X$ is a morphism $a:G\times X\to X$ such that 
	\[
	a\circ(m_G\times\id_X)=a\circ(\id_G\times a)
	\]
	and
	\[
	a\circ (e\times \id_X)=\id_X
	\]
\end{definition}

Given an action of $G$ on $X$, we obtain a homomorphism $\rho_a:\g\to\Gamma(X,\TT_X)$ as follows.  For an open set $U\sub X$, choose an open subset $U'\sub G$ containing the identity such that $a$ sends $U'\times U$ into $U$.  Let $f\in\Gamma(U,\OO_X)$ and $u\in\g$.  Then define the action of $u$ on $f$ by
\[
u(f)=-(u_e\otimes 1)(a^*(f)).
\]
The map $\rho_a$ determines an action of the Lie superalgebra $\g$ on $X$ (see \cref{app_superalg_actions}).

\begin{remark}
	If a Lie supergroup $G$ acts on a supervariety $X$, then by functoriality $G_0$ acts on both $X$ and $X_0$ in a natural way.
\end{remark}

We omit the proof of the following result.  It can be shown by developing the notion of an action of a super Harish-Chandra pair, and showing it is equivalent to an action of the corresponding supergroup.  The result is stated for supermanifolds without proof in \cite{deligne1999notes} and a full proof for supermanifolds is given in section 4.5 of \cite{balduzzi2011supermanifolds}.   The author provided a full proof for the algebraic case in their PhD thesis.

\begin{thm}\label{HC_pair_action}
	Let $G$ be a Lie supergroup with $\g=\operatorname{Lie}(G)$, and suppose that $X$ is a supervariety.  Suppose that $G_{0}$ acts on $X$ via $a_0:G_0\times X\to X$, and that we have a homomorphism of Lie superalgebras $\rho:\g\to\Gamma(X,\TT_X)$ such that
	\begin{enumerate}
		\item $\rho|_{\g_{\ol{0}}}(u)=-(u\otimes 1)\circ a_0^*$ for all $u\in\g_{\ol{0}}$;
		\item $\rho(\operatorname{Ad}(g)(u))=(a_0^{g^{-1}})^*\circ\rho(u)\circ(a_0^g)^*$ for all $g\in G_{0}$ and $u\in\g$, where $a_0^g=a_0\circ(i_g\times \id_X)$, where $i_g:\{g\}\to G_0$ is the natural inclusion.
	\end{enumerate}
	Then there exists a unique action $a:G\times X\to X$ of $G$ on $X$ such that $a|_{G_0}=a_0$ and $\rho_a=\rho$. 
\end{thm}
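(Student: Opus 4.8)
The plan is to rephrase the statement in terms of comorphisms and to reconstruct $a$ from its expansion in the odd directions using the super Harish--Chandra description of $\C[G]$. Since $G$ is affine, giving an action $a\colon G\times X\to X$ is the same as giving a coaction, i.e.\ a morphism of sheaves of superalgebras $c\colon\OO_X\to\C[G]\otimes_\C\OO_X$ satisfying the counit identity $(\varepsilon_G\otimes\id)\circ c=\id$ and coassociativity $(\Delta_G\otimes\id)\circ c=(\id\otimes c)\circ c$, where $\C[G]\otimes_\C\OO_X\cong(\operatorname{pr}_X)_*\OO_{G\times X}$ as sheaves of superalgebras on $|X|$ (using that $G$ is affine). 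Likewise $a_0$ corresponds to a coaction $a_0^*\colon\OO_X\to\C[G_0]\otimes\OO_X$. So the whole content is to construct a \emph{unique} $c$ whose composition with the restriction $\C[G]\otimes\OO_X\to\C[G_0]\otimes\OO_X$ is $a_0^*$ and whose first-order part along $\g$ is $\rho$; granting that, $a|_{G_0}=a_0$ and $\rho_a=\rho$ will be read off immediately. As $a_0^*$ and all the $\rho(u)$ are already sheaf morphisms, it suffices to define $c$ on sections.

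For the construction I would use the Harish--Chandra reconstruction of the Hopf superalgebra $\C[G]$ out of $\C[G_0]$ and $U(\g)$: concretely $\C[G]$ may be identified with a space of linear maps $\varphi\colon U(\g)\to\C[G_0]$ (those that are $G_0$-locally finite and equivariant for the left $U(\g_{\ol{0}})$-actions, where $\g_{\ol{0}}$ acts on $\C[G_0]$ by right-invariant differential operators), with product the convolution built from the coproduct of $U(\g)$ and the product of $\C[G_0]$, and with $\Delta_G$, $\varepsilon_G$ induced from those of $\C[G_0]$ and $U(\g)$. Since $\rho\colon\g\to\Gamma(X,\TT_X)\subseteq\End_\C(\OO_X)$ is a homomorphism of Lie superalgebras, it extends to a homomorphism of associative superalgebras $\rho\colon U(\g)\to\End_\C(\OO_X)$. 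I then define, for a section $f$,
\[
c(f)\colon U(\g)\longrightarrow\C[G_0]\otimes\OO_X,\qquad u\longmapsto a_0^*\!\big(\rho(u)(f)\big),
\]
and claim this is the desired coaction. Relative to any splitting $\C[G]\cong\C[G_0]\otimes\Lambda(\g_{\ol{1}}^*)$ and basis $v_1,\dots,v_q$ of $\g_{\ol{1}}$, this is the finite ``odd Taylor expansion'' $c(f)=\sum_{I=\{i_1<\dots<i_k\}}\theta_I\otimes a_0^*\big(\rho(v_{i_1})\cdots\rho(v_{i_k})(f)\big)$.

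The verification I anticipate breaks into five checks. (i) $c(f)$ lands in $\C[G]\otimes\OO_X$: the equivariance $a_0^*(\rho(xu)(f))=\partial_x\big(a_0^*(\rho(u)(f))\big)$ for $x\in U(\g_{\ol{0}})$ comes from multiplicativity of $\rho$ on $U(\g)$ together with condition (1) and coassociativity of $a_0^*$ (which jointly say $a_0^*$ intertwines the vector field $\rho(v)$ on $X$ with the right-invariant vector field $\partial_v$ on $G_0$ for $v\in\g_{\ol{0}}$), and the $G_0$-finiteness comes from condition (2) and local finiteness of $\C[G_0]$ under translation. (ii) $c$ is a morphism of sheaves of superalgebras: this is the super-Leibniz identity $\rho(u)(fg)=\sum\pm\,\rho(u_{(1)})(f)\,\rho(u_{(2)})(g)$ (Koszul signs, $\Delta_{U(\g)}u=\sum u_{(1)}\otimes u_{(2)}$), which is exactly $c(fg)=c(f)c(g)$ for the convolution product. (iii) Counit and boundary conditions: $c(f)$ evaluated at $1\in U(\g)$ is $a_0^*(f)$, giving $(\varepsilon_G\otimes\id)\circ c=\id$ and showing the restriction of $c$ along $G_0\hookrightarrow G$ is $a_0^*$, hence $a|_{G_0}=a_0$; evaluating the degree-one part gives $\rho_a=\rho$ up to the sign conventions in the definition. (iv) Coassociativity: evaluating both $(\Delta_G\otimes\id)\circ c$ and $(\id\otimes c)\circ c$ at $u\in U(\g)$ reduces, via the formulas for $\Delta_G$ and coassociativity of $\Delta_{U(\g)}$, to coassociativity of $a_0^*$. (v) Uniqueness: any $a'$ with $a'|_{G_0}=a_0$ and $\rho_{a'}=\rho$ has comorphism $c'$ that is forced, on each $u\in U(\g)$, to equal $a_0^*(\rho(u)(f))$ by iterating right-invariant differential operators on $G$, so $c'=c$.

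I expect the main obstacle to be steps (ii) and (iv), together with pinning down the Harish--Chandra reconstruction of $\C[G]$ precisely enough to run them --- the $G_0$-finiteness condition and the exact form of the product, $\Delta_G$ and $\varepsilon_G$ --- since that is where the sign bookkeeping lives and where conditions (1), (2) and the Lie-homomorphism property of $\rho$ are genuinely consumed. A clean way to organize the combinatorics is to first treat the case $X=G$, $a=m$, $\rho=$ right-invariant vector fields, where $c=m^*$ and every identity is tautological, and then to reduce the general case to it by naturality of the construction; alternatively one transcribes the supermanifold proof of \cite{balduzzi2011supermanifolds} to the algebraic setting, the only change being that in characteristic zero odd parameters square to zero, so the ``exponentials'' appearing there are replaced by the finite sum above and no convergence is needed.
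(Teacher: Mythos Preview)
The paper does not actually prove this theorem; it explicitly omits the proof, indicating only that it follows by developing the notion of an action of a super Harish--Chandra pair and transcribing the supermanifold argument of \cite{balduzzi2011supermanifolds} to the algebraic setting (full details being deferred to the author's thesis). Your proposal does precisely this---reconstructing the coaction from the pair $(a_0,\rho)$ via the Harish--Chandra identification of $\C[G]$ with $U(\g_{\ol 0})$-equivariant maps $U(\g)\to\C[G_0]$ and then running the finite odd Taylor expansion---so your approach coincides with the one the paper points to.
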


We will often use this result in the form of the following corollary:

\begin{cor}\label{stability}
	Suppose that a Lie supergroup $G$ acts on a supervariety $X$, and that the open subset $|U|\sub|X|$ is stable under the action of $G_0$.  Then the open subvariety $U$ is stable under the action of $G$, i.e.\ the action of $G$ on $X$ restricts to an action of $G$ on $U$.
\end{cor}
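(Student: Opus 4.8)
The plan is to deduce this directly from \cref{HC_pair_action}. Since $|U| \subseteq |X|$ is open and $G_0$-stable, the action $a_0 : G_0 \times X \to X$ restricts to an action $a_0^U : G_0 \times U \to U$ (this is the classical fact that open $G_0$-stable subschemes inherit the action, applied to the underlying even structure). So the data we need to feed into \cref{HC_pair_action} for the supervariety $U$ consists of: the $G_0$-action $a_0^U$, and a Lie superalgebra homomorphism $\rho^U : \g \to \Gamma(U, \TT_U)$ satisfying the two compatibility conditions. The natural candidate for $\rho^U$ is the restriction of $\rho_a$: given $u \in \g$, the vector field $\rho_a(u) \in \Gamma(X, \TT_X)$ restricts to a section $\rho_a(u)|_U \in \Gamma(U, \TT_U)$, using that $\TT_X$ is a sheaf (so sections restrict to open subschemes) and that $\TT_U = \TT_X|_U$ because derivations are defined locally on affine opens. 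This restriction is visibly a Lie superalgebra homomorphism because bracket and scalar structure are computed sectionwise.

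Next I would verify the two hypotheses of \cref{HC_pair_action} for $(a_0^U, \rho^U)$. Condition (1) asks that $\rho^U|_{\g_{\ol 0}}(u) = -(u \otimes 1)\circ (a_0^U)^*$ for $u \in \g_{\ol 0}$; this holds because the same identity holds for the original action on $X$ (it is part of the hypothesis that $G$ acts on $X$, i.e.\ $\rho_a$ arises from an action, so $\rho_a|_{\g_{\ol 0}}(u) = -(u\otimes 1)\circ a_0^*$), and pulling back along the open immersion $U \hookrightarrow X$ is compatible with $a_0^* $ restricting to $(a_0^U)^*$. Condition (2), the $\Ad$-equivariance $\rho^U(\Ad(g)u) = ((a_0^U)^{g^{-1}})^* \circ \rho^U(u) \circ ((a_0^U)^g)^*$ for $g \in G_0$, $u \in \g$, likewise follows by restricting the corresponding identity on $X$ to $U$, since $(a_0^U)^g = a_0^g|_U$ as morphisms $U \to U$ (this uses $G_0$-stability of $|U|$ to make sense of $(a_0^U)^g$ as a self-map). \cref{HC_pair_action} then produces a unique action $a^U : G \times U \to U$ with $a^U|_{G_0} = a_0^U$ and $\rho_{a^U} = \rho^U$.

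Finally, I must check that this $a^U$ is genuinely the restriction of $a$, i.e.\ that the diagram with $a$, the open immersions, and $a^U$ commutes — equivalently that $a$ sends $G \times U$ into $U$ at the level of the morphism, not just that $a_0$ does. One clean way: the uniqueness clause in \cref{HC_pair_action} shows any action of $G$ on $U$ refining $(a_0^U, \rho^U)$ equals $a^U$. To see $a$ restricts, note that on underlying topological spaces $|a|$ maps $|G \times U| = |G_0| \times |U|$ into $|U|$ by $G_0$-stability of $|U|$ (since $|G| = |G_0|$ and the topological action of $G$ agrees with that of $G_0$); hence $a$ restricts as a morphism of superschemes $G \times U \to U$, call it $\tilde a$. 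One checks $\tilde a$ is an action of $G$ on $U$ with $\tilde a|_{G_0} = a_0^U$ and $\rho_{\tilde a} = \rho^U$ (both computed by restricting from $X$), so by uniqueness $\tilde a = a^U$, which is exactly the assertion that the $G$-action on $X$ restricts to one on $U$.

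The main obstacle is bookkeeping rather than conceptual: carefully matching the pullback/restriction functors so that "restrict the action on $X$" and "apply \cref{HC_pair_action} on $U$" provably yield the same morphism. The key point that makes it work is that both $\TT_X$ and the structure sheaf are sheaves and the constructions ($\rho_a$, the conditions in \cref{HC_pair_action}, the derivation bracket) are all local on $X$, so they are automatically compatible with restriction to the open subscheme $U$; and the uniqueness in \cref{HC_pair_action} upgrades "compatible data" to "the morphisms actually agree."
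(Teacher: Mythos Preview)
Your proposal is correct and follows the paper's intended approach: the paper states this result as an immediate corollary of \cref{HC_pair_action} without giving a separate proof, and your argument spells out exactly how that deduction goes. One small remark: the direct topological argument in your final paragraph (using $|G|=|G_0|$ so that $|a|$ already sends $|G\times U|$ into $|U|$, whence $a$ restricts to a morphism $G\times U\to U$ and the action axioms restrict automatically) is in fact sufficient on its own, so the construction of $a^U$ via \cref{HC_pair_action} and the uniqueness comparison are not strictly needed---but they do no harm.
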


\subsection{Orbit maps and stabilizers} For $x\in X(\C)$, we have an orbit map at $x$, $a_x:G\to X$, given by $a\circ (\id_G\times i_x)$, where $i_x:\{x\}\to X$ is the natural inclusion.  We refer to $a_x^{-1}(x)$, the fiber of this morphism over $x$, as the stabilizer $\Stab_G(x)$ of $x$, a closed subgroup of $G$ (see section 11.8 of \cite{carmeli2011mathematical}.  The following lemma is well-known (see e.g.\ Lemma 4 of \cite{vishnyakova2011complex}).

\begin{lemma}\label{orbit_diff} For $x\in X(\C)$, the differential of the orbit map $a_x$ at the identity of $G$, $(da_x)_e:T_eG\to T_xX$, coincides with the natural evaluation map $\rho_a(\g)\to T_xX$.\end{lemma}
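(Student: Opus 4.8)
The plan is to unwind both maps at the level of point derivations and match them, using the factorization $a_x=a\circ(\id_G\times i_x)$ together with $a\circ(e\times\id_X)=\id_X$. Recall that for any morphism $\phi\colon Y\to Z$ and any $y\in Y(\C)$, the differential $(d\phi)_y\colon T_yY\to T_{\phi(y)}Z$ sends a point derivation $\delta$ to $f\mapsto\delta(\phi^*f)$. Writing $u|_e\in T_eG$ for the value at the identity of a right-invariant vector field $u\in\g$, and $u_e\in T_eG$ for the tangent vector with $u|_e=-u_e$ (so that $u=u_L$ in the notation defining $\operatorname{Lie}G$), I must therefore show that the tangent vector $f\mapsto(u|_e)(a_x^*f)$ agrees with $f\mapsto\rho_a(u)(f)(x)=\big(-(u_e\otimes1)(a^*f)\big)(x)$, where the latter uses the evaluation $\TT_{X,x}\to T_xX$ of \cref{smooth_rmk}.

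Since everything is local near $e$, near $x$, and near $a(e,x)=x$, I would pass to affine opens $\Spec B\ni e$ and $\Spec A\ni x$, so that $\OO_{G\times X}$ is locally $B\otimes A$ and $(u_e\otimes1)$ is literally $b\otimes h\mapsto u_e(b)\,h$. The heart of the argument is that the \emph{partial evaluation}
\[
u_e\otimes\mathrm{ev}_x\colon\ B\otimes A\longrightarrow\C,\qquad b\otimes h\mapsto u_e(b)\,h(x),
\]
is exactly the point derivation at $(e,x)$ corresponding to $(u_e,0)$ under the canonical splitting $T_{(e,x)}(G\times X)\cong T_eG\oplus T_xX$; verifying the Leibniz rule here uses that an odd function vanishes at a $\C$-point, which is also precisely what makes every Koszul sign in the surviving terms trivial. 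Granting this and using that the differential of $\id_G\times i_x$ at $e$ is $v\mapsto(v,0)$, we get
\[
\rho_a(u)(f)(x)=-(u_e\otimes\mathrm{ev}_x)(a^*f)=-\big((da)_{(e,x)}(u_e,0)\big)(f)=-\big((da_x)_e(u_e)\big)(f).
\]
The sign is undone by $u_e=-u|_e$, so $\rho_a(u)|_x=(da_x)_e(u|_e)$, which under $\g\cong T_eG$ is the asserted coincidence of the evaluation map $\rho_a(\g)\to T_xX$ with $(da_x)_e$. Alternatively one can skip the invariant repackaging and just expand $a^*f=\sum_j b_j\otimes h_j$, compute both sides as $\sum_j h_j(x)\,(u|_e)(b_j)$, and compare directly.

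The only genuine obstacle is sign bookkeeping: the two minus-sign conventions — the one in $u_L(f)=-(u_e\otimes1)(m^*f)$ defining $\g\cong T_eG$, and the one in $u(f)=-(u_e\otimes1)(a^*f)$ defining $\rho_a$ — must be tracked so that they cancel, and one should confirm that the splitting $T_{(e,x)}(G\times X)\cong T_eG\oplus T_xX$ is normalized so that $u_e\otimes\mathrm{ev}_x$ is genuinely $(u_e,0)$, with no stray sign or factor swap. Once the conventions are pinned down, as noted, no nontrivial Koszul signs intervene and the remaining computation is routine.
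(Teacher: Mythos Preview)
Your argument is correct: the factorization $a_x=a\circ(\id_G\times i_x)$ together with the identification of $u_e\otimes\mathrm{ev}_x$ as the point derivation $(u_e,0)\in T_{(e,x)}(G\times X)$ gives exactly the computation needed, and your handling of the two minus signs (from the definition of $u_L$ and from the definition of $\rho_a$) so that they cancel is right. The observation that odd functions vanish at $\C$-points, killing any nontrivial Koszul signs in the Leibniz check, is also correct.

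As for comparison with the paper: there is nothing to compare against. The paper does not supply a proof of this lemma; it simply records the statement as well-known and cites Lemma~4 of \cite{vishnyakova2011complex}. Your write-up therefore fills in what the paper omits, and does so by the standard direct computation one would expect.
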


The Lie superalgebra of $\Stab_G(x)$ is then exactly the kernel of the restriction morphism $\rho_a(\g)\to T_xX$.

\begin{definition}
	Suppose that $G$ acts on $X$.  We say that the action is a submersion at a point $x\in X(\C)$ if the map $a_x:G\to X$ is a submersion at $e_G\in G(\C)$ (or equivalently at any point of $G$).  In this case, the locus of points where the map is a submersion will be an open subset of $|X|$, and we refer to the open subvariety defined by this locus as an open orbit of $G$.   If all of $X$ is an open orbit of $G$, we say that $X$ is a homogeneous $G$-supervariety.
\end{definition}
\begin{remark}
	An action is a submersion at $x$ if and only if the evaluation map $\rho_a(\g)\to T_xX$ is surjective by \cref{orbit_diff}.  By \cref{smoothness}, an open orbit of $G$ must be smooth. 
\end{remark}

\begin{prop}\label{submersion_injec}
	Let $X$ be a supervariety, and let $a:G\times X\to X$ be an action of an algebraic supergroup $G$ on $X$.  Then for $x\in X(\C)$ the following are equivalent:
	\begin{enumerate}
		\item $a_x$ is a submersion;
		\item the pullback morphism of sheaves $a_x^*:\OO_X\to(a_x)_*\OO_G$ is injective;
		\item there exists a line bundle $\LL$ such that the pullback morphism $a_x^*:\LL\to (a_x)_*a_x^*\LL$ is injective.
		\item for all line bundles $\LL$ on $X$, the pullback morphism $a_x^*:\LL\to (a_x)_*a_x^*\LL$ is injective.
	\end{enumerate}
\end{prop}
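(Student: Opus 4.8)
The plan is to observe that conditions (2), (3), and (4) are equivalent for formal reasons and then to prove the two implications $(1)\Rightarrow(2)$ and $(2)\Rightarrow(1)$. For the formal part: since $a_x^*\OO_X=\OO_G$, condition (2) is precisely the statement that $a_x^*:\OO_X\to(a_x)_*a_x^*\OO_X$ is injective, which exhibits $\LL=\OO_X$ as a witness for (3), while (4) applied to $\LL=\OO_X$ returns (2). To move between a general line bundle and $\OO_X$ I would check injectivity of a morphism of sheaves on $X$ stalkwise: around a point of $X$, pick an open $V$ trivializing $\LL$; over $V$ and its open subsets the map $a_x^*:\LL\to(a_x)_*a_x^*\LL$ becomes, after a trivialization, the structure-sheaf pullback $\Gamma(V,\OO_X)\to\Gamma(a_x^{-1}(V),\OO_G)$. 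Thus $(1)\Rightarrow(2)$ will automatically upgrade to $(1)\Rightarrow(4)$, and $(3)$ for a single $\LL$ will force $(2)$, so that the genuine content is the equivalence $(1)\Leftrightarrow(2)$. Throughout I use that $|X|$ is irreducible and that restriction maps $\Gamma(|U|,\OO_X)\to\Gamma(|U'|,\OO_X)$ in a supervariety are injective.

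For $(1)\Rightarrow(2)$: if the action is a submersion at $x$, then $a_x:G\to X$ is a submersion at every point of $G$, hence, by the smoothness theory of \cref{app_smooth}, a flat and open morphism. Its image $U:=a_x(G)$ is therefore open, and dense because $|X|$ is irreducible, and $a_x:G\to U$, being flat and surjective, is faithfully flat; hence $a_x^*:\OO_U\to(a_x)_*\OO_G$ is injective. Given an open $V\sub X$ and $f\in\Gamma(V,\OO_X)$ with $a_x^*f=0$, faithful flatness over the nonempty open $V\cap U$ gives $f|_{V\cap U}=0$, and then injectivity of restriction forces $f=0$. So $a_x^*:\OO_X\to(a_x)_*\OO_G$ is injective, which is (2).

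For $(2)\Rightarrow(1)$: condition (2) says $a_x$ is scheme-theoretically dominant, hence dominant. Since $G$ is a smooth irreducible supervariety over $\C$, I would apply a generic-smoothness argument, again drawing on \cref{app_smooth}, to conclude that $a_x$ is a submersion at some point $g\in G(\C)$. Finally, from the equivariance $a_x\circ L_g=\lambda_g\circ a_x$ (with $L_g$ left translation on $G$ and $\lambda_g$ the action of $g$ on $X$, both isomorphisms), differentiating shows that $(da_x)$ has constant rank along $G$, so in fact $a_x$ is already a submersion at $e_G$, which is (1).

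The hard part will be the generic-smoothness input used in $(2)\Rightarrow(1)$: a scheme-theoretically dominant morphism from a smooth supervariety to a possibly non-reduced supervariety over $\C$ ought to be a submersion on a dense open of its source. Classically this is immediate, but here the non-reducedness of $X_0$ and the need to control odd as well as even directions mean it must be extracted from the smoothness results of \cref{app_smooth}. An alternative is to factor $a_x$ through $G/\Stab_G(x)$, use \cref{orbit_diff} to see that the induced map $G/\Stab_G(x)\to X$ has injective differential at the base point, and invoke that a morphism out of a smooth supervariety with injective differential at a point is an immersion there — but this rests on a comparably substantial fact. Everything else I regard as routine.
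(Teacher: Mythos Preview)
Your treatment of $(2)\Leftrightarrow(3)\Leftrightarrow(4)$ matches the paper's: trivialize $\LL$ locally and reduce to the structure sheaf, using that injectivity of a sheaf map is local. Your $(1)\Rightarrow(2)$ via faithful flatness is a valid variant; the paper instead uses the factorization $a_x:G\xrightarrow{\pi}G/K\xrightarrow{b_x}X$ with $K=\Stab_G(x)$, noting that when $a_x$ is submersive the immersion $b_x$ is too, hence is an isomorphism onto an open subscheme of $X$, and then injectivity of restriction on a supervariety (together with injectivity of $\OO_{G/K}\to\pi_*\OO_G$) gives (2) directly.

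For $(2)\Rightarrow(1)$ the paper takes a route different from your generic-smoothness argument, and in fact close to your ``alternative.'' It proves the contrapositive $\neg(1)\Rightarrow\neg(2)$ via the same factorization through $G/K$. If the underlying image of $G/K$ in $|X|$ is not open, pick a non-nilpotent function on $X$ vanishing on the closed subscheme defined by this image; some power of it lies in $\ker a_x^*$. If the image \emph{is} an open set $|U|$, then $b_x:G/K\to U$ is an immersion and a bijection on closed points but not a submersion; looking at local rings, Nakayama's lemma forces $b_x$ to be a closed embedding, so the stalk maps are surjective but not bijective, and any nonzero element of the kernel witnesses failure of (2). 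This bypasses generic smoothness entirely and reduces matters to Nakayama plus the fact that $b_x:G/K\to X$ is an immersion---precisely the input you flagged at the end, which the paper treats as part of the standard foundational package on supergroup actions. What your generic-smoothness route would buy, if made rigorous in the super setting (nontrivial, since $X_0$ need not be reduced), is independence from the orbit-factorization machinery; what the paper's route buys is that the only substantial input is already available from the theory of homogeneous spaces.
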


\begin{proof}
	We first prove the equivalence $(1)\iff(2)$.  Let $K$ be the stabilizer of $x$, and write $\pi:G\to G/K$ for the natural projection. Then the natural map of sheaves $\OO_{G/K}\to\pi_*\OO_{G}$ is injective.  There is an induced $G$-equivariant immersion $b_x:G/K\to X$ and this map factors the orbit map $a_x$.  Therefore if $a_x$ is a submersion, $G/K\to X$ is too, and hence it induces an isomorphism of $G/K$ onto an open subset of $X$.  By our assumption that restriction of functions is injective on supervarieties, the map 
	\[
	\OO_X\to b_*\OO_{G/K}\to b_*\pi_*\OO_{G}=(a_x)_*\OO_{G}
	\] 
	is injective.
	
	If $a_x$ is not a submersion, first suppose that the underlying image of $G/K$ in $X$ is not open.  Then we may choose a non-nilpotent function on $X$ which vanishes on the underlying closed subscheme defined by its image, so that some power of this function will vanish under pullback, and thus $a_x^*$ is not injective.  Therefore assume $G/K$ has an underlying open image, say $|U|\sub|X|$.  Then we may restrict to the open subscheme $U$ of $X$, and there the morphism $G/K\to U$ will be an isomorphism on closed points and an immersion, but not a submersion.  One may then show that this map is a closed embedding, by considering the map on local rings and using Nakayama's lemma.  Hence the map on stalks is surjective, and so if it were also injective the map would be an isomorphism on this open set $U$, contradicting the fact that $a_x$ is not submersive.
	
	Now we show that $(2)\iff(3)\iff(4)$.  For any line bundle $\LL$ on $X$, we may cover $X$ with open sets $U_i$ for which $\LL|_{U_i}\cong\OO_{U_i}$.  Then by functoriality, over $U_i$ the pullback morphism $\LL\to(a_x)_*(a_x)^*\LL$ is identified with $\OO_{U_i}\to (a_x)_*(a_x)^*\OO_{U_i}\cong (a_x)_*\OO_{U_i}$, and one is injective if and only if the other is.  Since injectivity of a morphism of sheaves is a local property, we are done.
	
\end{proof}

\begin{definition}
Let $G$ be a supergroup, $X$ a $G$-supervariety, and $\FF$ a quasicoherent sheaf on $X$.  Write $a:G\times X\to X$ for the action morphism and $p:G\times X\to X$ for the natural projection.  A $G$-linearization of $\FF$ is a choice of isomorphism of $\OO_{G\times X}$-modules $\varphi_{\FF}:a^*\FF\cong p^*\FF$ such that the following cocycle condition is satisfied: 
\[
(m_G\times \id_{X})^*\varphi_\FF=p_{23}^*\varphi_\FF\circ (\id_{G}\times a)^*\varphi_\FF,
\]
where $p_{23}:G\times G\times X\to G\times X$ is the projection onto the second and third factor. 

We will also refer to a linearized quasicoherent sheaf as an equivariant quasicoherent sheaf.  A morphism of $G$-equivariant quasicoherent sheaves is a morphism of quasicoherent sheaves that respects their equivariant structure.  Further, the pullback and pushforward of an equivariant sheaf along an equivariant map admits a natural equivariant structure.
\end{definition}
From a linearization on a quasicoherent sheaf $\FF$, one obtains the structure of a $G$-module on $\Gamma(X,\FF)$ and the structure of a $\g$-module on rational sections of $\FF$.  The latter structure satisfies the following Liebniz property: for $u\in\g$, $s$ a section of $\FF$, and $f$ a section of $\OO_X$, we have 
\[
u(fs)=u(f)s+(-1)^{\ol{f}\ol{u}}fu(s).
\]
\begin{definition}
	If $\g$ acts on a supervariety $X$, and $\FF$ is a quasicoherent sheaf on $X$, then a $\g$-linearization of $\FF$ is a Lie superalgebra homomorphism $\g\to\End(\FF)$ satisfying the above Liebniz rule.
\end{definition}
If $\LL$ is an equivariant line bundle, then if we choose a trivialization $\psi:\LL|_U\to\OO_X|_U$ over an open subscheme $U$, we obtain an action of $\g$ on sections of $\OO_X$ over $U$, which we write as $u_{\psi}$.  For $u\in\g$ and $f$ a section of $\OO_X$ over $U$, the action satisfies
\[
u_\psi(f)=u(f)+(-1)^{\ol{u}\ol{f}}fu_{\psi}(1).
\]

\begin{remark}  It is not true that every line bundle has some power that admits a linearization from a group action, even on a smooth supervariety.  An example is given by $\mathbb{P}^{1|n}$ for $n\geq 2$, whose Picard group is $\Z\oplus\C^{2^{n-2}(n-2)+1}$ (see \cite{cacciatori2018projective}), but only the bundles from the $\Z$ factor have a linearization.
	
	Because of this we will often need an additional assumption that there exists an equivariant very ample line bundle.  The author does not know whether such a line bundle must exist on a quasiprojective supervariety.
\end{remark}
\begin{prop}\label{homog_line_bundle_char}
	Let $X$ be a $G$-supervariety.  If $X$ is homogeneous then for any $G$-equivariant line bundle $\LL$, a non-zero $G$-submodule $V$ of $\Gamma(X,\LL)$ generates $\LL$.  Conversely, if $X$ is quasiprojective and admits a very ample $G$-equivariant line bundle $\LL$, then the converse also holds.
\end{prop}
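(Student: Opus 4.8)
I would prove the two implications separately. For the forward one---the delicate direction---assume $X$ is homogeneous. Then $X$ is smooth (\cref{smoothness}), hence locally graded, so $\JJ_X^{d+1}=0$ for $d$ the odd dimension of $X$, and for each $k$ there is a natural isomorphism $\JJ_X^{k}\LL/\JJ_X^{k+1}\LL\cong\Lambda^{k}\NN_X\otimes_{\OO_{X_0}}i_X^{*}\LL$ of sheaves on $X_0$; write $\sigma_k(s)$ for the image in $\Gamma(X_0,\Lambda^{k}\NN_X\otimes i_X^{*}\LL)$ of a section $s\in\Gamma(X,\JJ_X^{k}\LL)$. Also $G_0$ acts transitively on $|X|$ (the open orbit is all of the connected space $|X|$) and $\rho_a(\g)\to T_yX$ is surjective for every $y\in X(\C)$. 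Given a nonzero $G$-submodule $V\subseteq\Gamma(X,\LL)$, let $\FF\subseteq\LL$ be the subsheaf it generates; the set $L:=\{\,y\in|X|:\FF_y=\LL_y\,\}$ is open and, by Nakayama, equals $\{\,y : s(y)\neq 0 \text{ for some } s\in V\,\}$, and the $G$-linearization of $\LL$ makes $L$ a $G_0$-stable subset, so by transitivity $L$ is either empty or all of $|X|$. Thus it suffices to show that some section of $V$ does not lie in $\Gamma(X,\JJ_X\LL)$.

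Suppose instead that $V\subseteq\Gamma(X,\JJ_X\LL)$. As $V\neq0$ and $\JJ_X^{d+1}\LL=0$, choose $k\geq1$ maximal with $V\subseteq\Gamma(X,\JJ_X^{k}\LL)$, a section $s\in V$ with $\sigma_k(s)\neq0$, and a point $y\in X_0(\C)$ at which $\sigma_k(s)$ does not vanish. Since the fibre of $\NN_X$ at $y$ is dual to $(T_yX)_{\ol{1}}$ and $k\geq1$, there is $v\in(T_yX)_{\ol{1}}$ with $\iota_v\!\bigl(\sigma_k(s)(y)\bigr)\neq0$; by surjectivity of $\rho_a(\g)\to T_yX$ we may take $v=\rho_a(u)|_y$ for some odd $u\in\g$. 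The odd vector field $\rho_a(u)$ carries $\JJ_X^{k}\LL$ into $\JJ_X^{k-1}\LL$, inducing on the associated graded contraction against its symbol, whose value at $y$ is $v$; hence $\sigma_{k-1}\!\bigl(u(s)\bigr)(y)=\pm\,\iota_v\!\bigl(\sigma_k(s)(y)\bigr)\neq0$, so $u(s)\notin\Gamma(X,\JJ_X^{k}\LL)$. But $u(s)\in V$ because $V$ is $\g$-stable, contradicting the maximality of $k$. This proves the forward implication.

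For the converse, assume $X$ is quasiprojective with a very ample $G$-equivariant line bundle $\LL$ but is not homogeneous; the goal is to produce, for some $n\geq0$, a nonzero $G$-submodule of $\Gamma(X,\LL^{\otimes n})$ that does not generate $\LL^{\otimes n}$, contradicting the hypothesis. Since $X$ is not homogeneous, some orbit $G\cdot x_0$ fails to be open---otherwise the orbits would be disjoint nonempty open sets covering the connected space $|X|$, forcing a single one; being locally closed, $G\cdot x_0$ is then not dense, so $Z:=\ol{G\cdot x_0}$ is a proper, nonempty, closed, $G$-stable subvariety of $X$, whose ideal sheaf $\II:=\II_Z\subsetneq\OO_X$ is a nonzero $G$-equivariant coherent ideal. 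By \cref{serre_thm} there is $n\geq0$ with $\II\otimes\LL^{\otimes n}$ globally generated, and, this sheaf being nonzero, $V:=\Gamma(X,\II\otimes\LL^{\otimes n})\neq0$; through the $G$-equivariant inclusion $\II\otimes\LL^{\otimes n}\hookrightarrow\LL^{\otimes n}$, $V$ becomes a nonzero $G$-submodule of $\Gamma(X,\LL^{\otimes n})$. Every $s\in V$ vanishes along $Z$, hence $s(z)=0$ for all $z\in|Z|\neq\emptyset$, so $V\otimes_{\C}\OO_X\to\LL^{\otimes n}$ is not surjective at any point of $Z$; thus $V$ does not generate $\LL^{\otimes n}$, and therefore $X$ must be homogeneous.

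The step I expect to be the main obstacle is the second paragraph. Unlike the classical situation, a nonzero $G$-submodule can consist entirely of sections vanishing at every closed point---odd sections, or even nilpotent ones---so one cannot simply pick a section that is nonvanishing somewhere; the argument must genuinely use the $\g$-action together with the $\JJ_X$-adic filtration to manufacture such a section out of any nonzero submodule. The remaining points---naturality of the associated-graded isomorphism and compatibility of symbols with the vector-field action, $G_0$-invariance of $L$, and the properties of orbit closures and their ideal sheaves---I expect to be routine.
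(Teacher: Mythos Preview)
Your forward implication is correct, and the underlying idea---move a section down a filtration using an element of $\g$---is the same as the paper's, but the execution differs. The paper works locally at a single point $x$ where $V$ fails to generate: it takes the maximal $n$ with $V\subseteq\m_x^n\LL_x$, picks $s\in V$ landing in $\m_x^n\LL_x\setminus\m_x^{n+1}\LL_x$, trivializes $\LL$ near $x$, and uses surjectivity of $\g\to T_xX$ to find $u\in\g$ (of either parity) with $u(s)\in\m_x^{n-1}\LL_x\setminus\m_x^n\LL_x$, the linearization correction $fu_\psi(1)$ lying in $\m_x^n$. Your route separates the even and odd directions: $G_0$-transitivity on $|X|$ handles the even part and reduces the question to the $\JJ_X$-adic filtration, where only odd $u$ are needed. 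The paper's argument is a bit shorter and avoids the auxiliary reduction, but yours is perfectly valid.

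Your converse, however, has a genuine gap precisely at the step you flagged as routine: ``properties of orbit closures and their ideal sheaves.'' In the super setting it is \emph{not} true that if $X$ is not homogeneous then some orbit has non-dense underlying set. The image of $a_{x_0}$ in $|X|$ is the $G_0$-orbit of $x_0$, and it can perfectly well be all of $|X|$ while $a_{x_0}$ fails to be a submersion---the obstruction lives entirely in the odd directions. (Think of $X=Y\times\C^{0|1}$ for a homogeneous $G$-supervariety $Y$, with $G$ acting trivially on the second factor: $|X|=|Y|$ is a single $G_0$-orbit, yet $X$ is not homogeneous.) So the chain ``not homogeneous $\Rightarrow$ some orbit not open $\Rightarrow$ locally closed and not dense $\Rightarrow$ proper closed $G$-stable $Z$'' breaks at the density step, and you do not obtain $\II_Z$.

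The paper supplies the missing $G$-equivariant ideal sheaf by a different mechanism: pick $x$ with $a_x$ not a submersion and invoke \cref{submersion_injec} to see that $\KK:=\ker\bigl(a_x^*:\OO_X\to(a_x)_*\OO_G\bigr)$ is a nonzero $G$-equivariant coherent ideal sheaf---possibly supported on all of $|X|$ and consisting of nilpotent sections. From that point the paper proceeds exactly as you do: twist by $\LL^{\otimes n}$, apply \cref{serre_thm} to get $\Gamma(X,\KK(n))\neq 0$, and observe this is a nonzero $G$-submodule of $\Gamma(X,\LL^{\otimes n})$ that cannot generate. So your overall strategy for the converse is right; only the source of the $G$-stable ideal needs to be replaced.
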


\begin{proof}
	Let $\LL$ be a $G$-equivariant line bundle on $X$, and let $V\sub\Gamma(X,\LL)$ be a nonzero $G$-submodule.  Then if $V$ does not generate $\LL$, there must exist a point $x\in X(\C)$ such that when we pass to the stalk of $\LL$ at $x$ we find that $V\sub\m_x\LL_x$.  Therefore there exists a maximal positive integer $n$ such that $V\sub\m_x^n\LL_x$.  
	
	Let $s\in V$ be in $\m_x^n\LL_x\setminus\m_x^{n+1}\LL_x$.  Choose a trivialization $\psi:\LL_x\cong\OO_x$ and write $f=\psi(s)$ so that $f\in \m_x^{n}\setminus\m_x^{n+1}$.  Then because $X$ is homogeneous there exists $u\in\g$ such that $u(f)\in\m_x^{n-1}\setminus\m_x^{n}$.  Therefore
	\[
	u_{\psi}(f)=u(f)+(-1)^{\ol{f}\ol{u}}fu_{\psi}(1)\in\m_x^{n-1}\setminus\m_x^n,
	\]
	so in particular $u(s)\in\m_{x}^{n-1}\LL_x\setminus\m_x^n\LL_x$ and $u(s)\in V$, a contradiction.  Therefore instead $V$ must generate $\LL$.
	
		Let $X$ be quasiprojective admitting a very ample $G$-equivariant line bundle $\LL$, and suppose that $X$ is not homogeneous.  Then there exists  $x\in X(\C)$ such that $a_x$ is not a submersion. By \cref{submersion_injec},  the pullback morphism $\OO_X\to (a_x)_*\OO_X$ is not injective.  Write $\KK$ for its kernel, so that we obtain an exact sequence of $G$-equivariant sheaves
	\[
	0\to\KK\to\OO_X\to (a_x)_*\OO_X.
	\]
	Since $\LL$ is $G$-equivariant and flat as an $\OO_X$-module, we may twist by it and obtain for each $n\in\N$  an exact sequence of $G$-equivariant sheaves
	\[
	0\to\KK(n)\to\LL^{\otimes n}\to (a_x)_*\OO_X\otimes\LL^{\otimes n},
	\] 
	where we write $\KK(n):=\KK\otimes\LL^{\otimes n}$.  Since $\LL$ is very ample, by \cref{serre_thm} there exists $n>0$ such that $\KK(n)$ is globally generated, and so in particular $\Gamma(X,\KK(n))\neq0$.  However by left exactness of global sections, $\Gamma(X,\KK(n))\sub\Gamma(X,\LL^{\otimes n})$ is a non-zero $G$-submodule.  Since the morphism $\OO_X\to (a_x)_*\OO_x$ is not zero, necessarily $\Gamma(X,\KK(n))$ cannot generate $\LL^{\otimes n}$, a contradiction, and we are done.
\end{proof}

\begin{remark}\label{algebra_action_remark}
	This proof shows that if a Lie superalgebra $\g$ acts homogeneously on a supervariety $X$ (see \cref{app_superalg_actions} for the meaning of this) then if $\LL$ is a $\g$-equivariant line bundle and $V$ is a non-zero $\g$-stable submodule of $\Gamma(X,\LL)$, $V$ must generate $\LL$.
\end{remark}

\begin{cor}\label{affine_stable_ideals}
If $X$ is an affine $G$-supervariety and $\LL$ is a $G$-equivariant line bundle on $X$, then  $\Gamma(X,\LL)$ admits a non-zero $G$-stable $\Gamma(X,\OO_X)$-submodule if and only if $X$ is not homogeneous.  In particular, $\C[X]$ has no nontrivial $G$-stable ideals if and only if $X$ is homogeneous.
\end{cor}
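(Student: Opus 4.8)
The plan is to reduce everything to \cref{homog_line_bundle_char} and \cref{submersion_injec}. The only extra ingredient is the standard fact that over an affine superscheme $X=\Spec A$ the functor $\Gamma(X,-)$ is exact on quasicoherent sheaves (as $\widetilde{(\cdot)}$ and $\Gamma(X,-)$ are quasi-inverse equivalences, exactly as for ordinary schemes); consequently a $\C$-subspace $V\sub\Gamma(X,\FF)$ generates a quasicoherent sheaf $\FF$ if and only if $A\cdot V=\Gamma(X,\FF)$, since surjectivity of $V\otimes_\C\OO_X\to\FF$ may be tested after applying $\Gamma(X,-)$.

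For the direction ``$X$ homogeneous $\Rightarrow$ $\Gamma(X,\LL)$ has no nonzero proper $G$-stable $A$-submodule'', let $N\sub\Gamma(X,\LL)$ be a nonzero $G$-stable $A$-submodule. Then $N$ is in particular a nonzero $G$-submodule of $\Gamma(X,\LL)$, so \cref{homog_line_bundle_char} says it generates $\LL$; by the observation above this gives $A\cdot N=\Gamma(X,\LL)$. But $A\cdot N=N$ since $N$ is already an $A$-submodule, hence $N=\Gamma(X,\LL)$.

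For the converse, suppose $X$ is not homogeneous and choose $x\in X(\C)$ at which $a_x$ is not a submersion. By \cref{submersion_injec} the pullback morphism $a_x^*:\LL\to(a_x)_*a_x^*\LL$ is then not injective; let $\KK$ denote its kernel. By the same reasoning used in the proof of \cref{homog_line_bundle_char} (the map $a_x$ is $G$-equivariant, so pullback, pushforward, and the natural map of sheaves are all $G$-equivariant), $\KK$ is a nonzero $G$-equivariant $\OO_X$-submodule of $\LL$, and since the morphism $\LL\to(a_x)_*a_x^*\LL$ is nonzero (locally it carries $1$ to $1$) we have $\KK\subsetneq\LL$. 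Applying the exact functor $\Gamma(X,-)$, the submodule $\Gamma(X,\KK)\sub\Gamma(X,\LL)$ is nonzero (as $\KK\neq0$), proper (as $\KK\neq\LL$), $A$-stable, and $G$-stable---precisely what is required. The concluding sentence is the special case $\LL=\OO_X$ with its canonical $G$-linearization, for which $G$-stable $A$-submodules of $\C[X]=\Gamma(X,\OO_X)$ are exactly the $G$-stable ideals and ``nontrivial'' means nonzero and proper.

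I do not anticipate a genuine obstacle: the substance lies in the two cited propositions. The point needing care is the passage between ``$V$ generates the sheaf $\LL$'' and ``$V$ generates $\Gamma(X,\LL)$ as an $A$-module'', together with the ensuing rigidity---an $A$-submodule of $\Gamma(X,\LL)$ that generates $\LL$ on an affine supervariety must be all of $\Gamma(X,\LL)$---which is what forces the dichotomy.
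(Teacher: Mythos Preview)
Your argument is correct and is exactly what the paper intends: it too invokes \cref{homog_line_bundle_char} together with exactness of $\Gamma(X,-)$ on an affine superscheme. You have merely unpacked the one-line proof, in particular making explicit that for the converse one re-runs the kernel argument from the proof of \cref{homog_line_bundle_char} with the given $\LL$, using exactness of global sections in place of Serre's theorem so that no twist by a power of a very ample bundle is needed.
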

\begin{proof}
We apply \cref{homog_line_bundle_char}, using that if $X$ is affine the global sections functor is exact.
\end{proof}

\subsection{Rational invariants} In the classical world, if an algebraic group $G$ acts on a space $X$, then it admits an open orbit if and only if $\C(X)^\g=\C$.  In the super world, this general principle no longer holds.
\begin{example}
	Consider the action of $GL(0|n)$ on $X=\C^{0|n}$ by the standard representation of $GL(0|n)$.  This supervariety has one point, and the orbit of that point is just itself, so there is not an open orbit.  We have $\C(X)=\Lambda^\bullet(\C^n)^*$, and this is a multiplicity-free representation of $\g=\g\l(n)$, so in particular $\C(X)^\g=\C$.
\end{example}
We do have the forward direction:
\begin{prop}
	If a Lie supergroup $G$ acts on a supervariety $X$ with an open orbit, we have $\C(X)^\g=\C$.
\end{prop}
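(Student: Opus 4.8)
The plan is to reduce to the case of a homogeneous $G$-supervariety by restricting to the open orbit, and then to show that on a homogeneous space there are no nonconstant $\g$-invariant rational functions. Let $|U| \subseteq |X|$ be the open orbit, so that $U$ is a homogeneous $G$-supervariety by hypothesis, and by \cref{stability} the action of $G$ restricts to $U$. Since $X$ is irreducible, $|U|$ is dense in $|X|$ and hence the generic points of $|X|$ and $|U|$ coincide; therefore $\C(X) = \C(U)$ as superalgebras, and this isomorphism is compatible with the $\g$-action (the $\g$-action on rational functions is defined locally via $\rho_a$, which is unchanged by restriction). So it suffices to prove the statement when $X$ itself is homogeneous.

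Assume then that $X$ is homogeneous, and let $f \in \C(X)^\g$ be a homogeneous invariant rational function. First consider the case $\ol{f} = \ol{0}$. I would argue that $f$ is in fact regular at every point of an open dense subset, and then use invariance to propagate regularity along the orbit. More precisely: $f$ is a section of $\OO_X$ over some nonempty open $|V| \subseteq |X|$. Pick $x \in V(\C)$. For any $u \in \g$, the Leibniz rule and $u(f) = 0$ give that the germ of $f$ at $x$ is annihilated by all of $\g$ acting via $\rho_a$; combined with the fact that at a homogeneous point the evaluation $\rho_a(\g) \to T_x X \cong (\m_x/\m_x^2)^*$ is surjective (\cref{orbit_diff} and the definition of submersion), one deduces that $f - f(x) \in \m_x^k$ for all $k$, hence $f - f(x)$ vanishes to infinite order at $x$. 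Since $X$ is a supervariety, $\C(X)$ is an integral superdomain and $\bigcap_k \m_x^k = 0$ in the relevant sense, forcing $f = f(x)$ on a neighborhood of $x$, and then $f$ is constant as an element of $\C(X)$ by injectivity of restriction.

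For the case $\ol{f} = \ol{1}$, the same vanishing-to-infinite-order argument applies even more easily, since an odd rational function satisfies $f(x) = 0$ automatically at every closed point, so the argument above with the constant $f(x)$ replaced by $0$ shows $f$ vanishes to infinite order everywhere it is defined, hence $f = 0$; but $0$ is even, so there are no nonzero odd invariants and the claim follows. The main obstacle I anticipate is making rigorous the step "$f - f(x) \in \m_x^k$ for all $k$ $\Rightarrow$ $f$ is locally constant": one needs that $f$, a priori only rational, is actually regular near $x$ (the infinite-order vanishing should be extracted from the germ of $f$ in $\OO_{X,x}$ once one knows $f$ has no pole at $x$, which requires choosing $x$ generically and using separatedness/the integral-domain property), together with a Krull-intersection-type statement $\bigcap_k \m_x^k = 0$ in the local superalgebra $\OO_{X,x}$ — valid because $\OO_{X,x}$ is Noetherian local with $(\OO_{X,x})_{\ol 1}$ nilpotent and $\OO_{X,x}/((\OO_{X,x})_{\ol 1})$ a Noetherian local domain, so Krull's theorem applies mod nilpotents and the nilpotent part is controlled by finitely many odd generators. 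Handling the odd directions in $\m_x/\m_x^2$ correctly (they contribute to $T_x X$ and must be hit by odd elements of $\g$ at a submersion point) is the delicate bookkeeping point.
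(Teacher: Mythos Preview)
Your argument is correct, but it takes a different route from the paper's. The paper's proof is much shorter because it leverages \cref{affine_stable_ideals} (and the remark after it): restrict to an affine open $\Spec A$ inside the open orbit on which $f$ is regular; then $A$ has no nontrivial $\g$-stable ideals, so for any $x\in\Spec A(\C)$ the $\g$-stable ideal $(f-f(x))\subseteq\m_x$ must be zero, i.e.\ $f=f(x)$. Your approach instead redoes by hand the ``derivations lower $\m_x$-adic order'' mechanism that sits behind \cref{homog_line_bundle_char}, and then appeals to Krull intersection. Both arguments rest on the same local computation, but the paper packages it once in \cref{homog_line_bundle_char}/\cref{affine_stable_ideals} and then quotes it, while you unpack it at the point of use.

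One point you should make explicit: the inductive step ``$u(f)=0$ for all $u\in\g$ and $\rho_a(\g)\twoheadrightarrow T_xX$ $\Rightarrow$ $f-f(x)\in\m_x^{k+1}$ whenever $f-f(x)\in\m_x^k$'' uses that $x$ lies in the open orbit and hence is a \emph{smooth} point. At a smooth point the associated graded $\bigoplus_k\m_x^k/\m_x^{k+1}$ is the super-polynomial ring $S^\bullet(\m_x/\m_x^2)$, on which $T_xX=(\m_x/\m_x^2)^*$ acts by the constant-coefficient partials; these have no joint kernel in positive degree, which is exactly what forces $\overline{f-f(x)}=0$ in $\m_x^k/\m_x^{k+1}$. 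Without smoothness this step can fail, so you should say so rather than leaving it implicit. Your Krull-intersection justification ($\m^k\subseteq\m_{\ol 0}^{k-n}A$ for $k\gg0$ and then Krull for the finitely generated $A_{\ol 0}$-module $A$) is fine, and your treatment of the odd case is correct: odd $f$ gives $f(x)=0$, and the same induction forces $f\in\bigcap_k\m_x^k=0$.
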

\begin{proof}
	Let $f\in\C(X)^\g$ be non-zero, and choose an affine open subvariety $\Spec A$ of $X$ contained in the open orbit of $G$ on which $f$ is regular.  Then $A$ has no non-trivial $\g$-stable ideals by \cref{affine_stable_ideals} and the remark following it.  Therefore $(f)=A$, so $f$ is non-vanishing on $A$.  However, if $x\in\Spec A(\C)$, then $f-f(x)$ is $\g$-fixed and vanishes at $x$, i.e.\ is not inveritible, so $(f-f(x))$ is a $\g$-stable ideal not equal to $A$.  Thus it must be trivial, i.e.\ $f=f(x)$, so $f$ is a constant function.
\end{proof}

As for a converse, we may state one for certain algebraic supergroups.  First we need some notation.  Suppose that $\b$ is a solvable Lie superalgebra such that $[\b_{\ol{1}},\b_{\ol{1}}]\sub[\b_{\ol{0}},\b_{\ol{0}}]$.  Then by lemma 1.37 of \cite{cheng2012dualities}, every finite-dimensional irreducible representation of $\b$ is one-dimensional.  

If $V$ is a representation of $\b$, we write $V^{(\b)}$ for the span of the $\b$-eigenvectors of $V$, which will be a semisimple representation of $\b$.  Write $\Lambda_{\b}(V)$ for the collection of characters $\lambda$ of $\b$ such that there is a $\b$-eigenvector of weight $\lambda$ in $V$.  Finally, if $\b$ acts by vector fields on the functions of a supervariety $X$, set
\[
\Lambda_{\b}^+(X):=\Lambda_{\b}(\C[X]), \ \ \ \ \ \Lambda_{\b}(X):=\Lambda_{\b}(\C(X)).
\]
Observe that if $A$ is a superalgebra which $\b$ acts on by derivations, then $A^{(\b)}$ is a subalgebra of $A$.
\begin{prop}\label{ratl_funct_open_orbit}
	Let $B$ be a solvable connected algebraic supergroup such that $[\b_{\ol{1}},\b_{\ol{1}}]\sub[\b_{\ol{0}},\b_{\ol{0}}]$ where $\b=\operatorname{Lie}(B)$.  Let $X$ be a $B$-supervariety.  If $\C(X)^{(\b)}$ is a multiplicity-free $\b$-representation such that every non-zero $f\in\C(X)^{(\b)}$ is non-nilpotent, then $X$ has an open $B$-orbit.  Equivalently, $\C(X)^{\b}=\C$ and $\C(X)^{(\b)}$ is an integral domain.
\end{prop}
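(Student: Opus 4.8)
The plan is to reduce to the affine case and then localize at a single $\b$-semi-invariant so that every nonzero $\b$-eigenvector becomes a unit; by \cref{affine_stable_ideals} the resulting principal open subvariety is then homogeneous, i.e.\ an open $B$-orbit. I would begin with the commutative-superalgebra preliminaries, which also give the equivalence of the two formulations. Since $X$ is a supervariety, $\C(X)$ is an integral superdomain whose odd part is a finitely generated module over its even part; hence the ideal generated by $\C(X)_{\ol1}$ is nilpotent and equals both the set of zero divisors and, because $X_0$ is generically integral, the nilradical of $\C(X)$, and $\C(X)_{\ol0}$ is a local ring with nilpotent maximal ideal. Thus an even element of $\C(X)$ is a non-zero-divisor iff it is non-nilpotent iff it is invertible, while every odd element squares to zero. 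In particular the non-nilpotency hypothesis forces $\C(X)^{(\b)}\subseteq\C(X)_{\ol0}$, and then each nonzero $f\in\C(X)^{(\b)}$ is invertible with $f^{-1}$ again a $\b$-eigenvector of the opposite weight. Together with multiplicity-freeness this shows the weights of $\C(X)^{(\b)}$ form a subgroup $\Lambda$ of the character lattice of $B$ (a finitely generated free abelian group), that $\C(X)^{(\b)}$ is $\Lambda$-graded with one-dimensional graded pieces and every product of nonzero homogeneous elements nonzero, and hence that $\C(X)^{(\b)}\cong\C[t_1^{\pm1},\dots,t_m^{\pm1}]$ is a finitely generated domain whose degree-zero part is $\C(X)^\b=\C$; the converse implication is the same bookkeeping reversed. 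It remains to produce an open $B$-orbit, and since $B_0$ is connected solvable I would first replace $X$ by a dense open $B_0$-stable affine subvariety, which is $B$-stable by \cref{stability} and has the same field of rational functions, reducing to $X=\Spec A$ affine.

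Two facts about $A=\C[X]$ then drive the argument. First, $A$ is a locally finite rational $B$-module on which $\b$ acts by derivations, and since the bracket hypothesis makes finite-dimensional irreducible $\b$-modules one-dimensional, every finite-dimensional $\b$-module has a one-dimensional submodule; therefore every nonzero $\b$-stable ideal of $A$ contains a nonzero $\b$-eigenvector (intersect a nonzero element's finite-dimensional $B$-submodule with the ideal). Second, for $h\in\C(X)^{(\b)}$, necessarily even of some weight $\chi$, the denominator ideal $I=\{a\in A:ah\in A\}$ is nonzero and $\b$-stable: for $u\in\b$ one has $u(ah)=u(a)h+\chi(u)ah$ when $u$ is even and $u(ah)=u(a)h$ when $u$ is odd (using that $h$ is even, so $\chi$ vanishes on $\b_{\ol1}$), whence $ah\in A$ implies $u(a)h\in A$. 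By the first fact $I$ contains a nonzero even $\b$-eigenvector $q$, and then $p:=qh\in A$ is again a $\b$-eigenvector, so $h=p/q$ with $p,q\in A^{(\b)}$.

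Applying this to a finite set of generators $h_1^{\pm1},\dots,h_m^{\pm1}$ of $\C(X)^{(\b)}$ produces nonzero even $\b$-eigenvectors $p_i,q_i\in A^{(\b)}$ with $h_i=p_i/q_i$; set $f:=\prod_i p_iq_i\in A^{(\b)}$, which is nonzero (a product of non-zero-divisors) and even, so $X_f=\Spec A_f$ is a dense $B$-stable principal open subvariety. A short computation shows every $\b$-eigenvector of $A_f$ is of the form $a/f^n$ with $a=(a/f^n)f^n\in A$ itself a $\b$-eigenvector, so $A_f^{(\b)}=A^{(\b)}[f^{-1}]$; since $f/q_i\in A^{(\b)}$ this algebra contains $q_i^{-1}$, hence $h_i^{\pm1}$, hence all of $\C(X)^{(\b)}$, so that $A_f^{(\b)}=\C(X)^{(\b)}\cong\C[t_1^{\pm1},\dots,t_m^{\pm1}]$.

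Now every nonzero $\b$-eigenvector of $A_f$ lies in a single graded piece of this Laurent algebra and is therefore invertible in $A_f$; by the first fact applied to $A_f$, any nonzero $B$-stable ideal of $A_f$ contains such an eigenvector and so equals $A_f$, i.e.\ $A_f$ has no nontrivial $B$-stable ideal. By \cref{affine_stable_ideals}, $\Spec A_f$ is homogeneous; being a dense open subvariety of $X$, it is an open $B$-orbit, completing the proof. I expect the main obstacle to be the affine reduction — making precise the existence of a dense open $B$-stable affine subvariety for the action of the connected solvable supergroup $B$ — together with keeping the superalgebra bookkeeping about zero divisors, nilpotents, and parities of eigenvectors and their weights consistent throughout; once $X$ is affine, the ring-theoretic core (denominator ideals plus a single localization that trivializes all $B$-stable ideals) is straightforward.
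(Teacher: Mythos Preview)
Your argument is correct and follows essentially the same strategy as the paper: reduce to a $B$-stable affine open on which every nonzero $\b$-eigenvector is a unit, then invoke \cref{affine_stable_ideals}. The execution differs only in the order of operations. The paper first removes the zero and pole divisors of a $\Z$-basis $f_1,\dots,f_n$ of $\Lambda(X)$ (these loci are $B_0$-stable since the $f_i$ are semi-invariants, hence $B$-stable by \cref{stability}), then shrinks so that $U_0$ is normal, and only then applies Sumihiro's theorem to extract a $B$-stable affine open; on that open the $f_i$ are already units. You instead pass to affine first and use the denominator-ideal trick to write each rational eigenvector as a quotient of regular eigenvectors, followed by a single principal localization. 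Both routes require Sumihiro (or its solvable-group avatar), and you correctly flag that the affine reduction needs normality of $X_0$ on a $B_0$-stable open---exactly the step the paper makes explicit. Your observation that the non-nilpotency hypothesis forces $\C(X)^{(\b)}\subseteq\C(X)_{\ol 0}$ is what guarantees the eigenvector produced in a $\b$-stable ideal is automatically even, so the denominator-ideal argument goes through as written.
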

\begin{proof}
	Write $\Lambda$ for the character lattice of $B$, a finitely generated free abelian group.  By our assumptions, $\C(X)^{(\b)}$ is isomorphic to the group algebra of a subgroup $\Lambda(X)$ of $\Lambda$, and hence $\Lambda(X)$ is free of some rank, say $n\in\N$.  Choose rational $B$-eigenfunctions $f_1,\dots,f_n\in \C(X)^{(\b)}$ that such that their weights form a $\Z$-basis of $\Lambda(X)$.  Then by removing the divisors of zeroes and poles of $f_1,\dots,f_n$, there exists a $B$-stable open subvariety $U$ of $X$ where $f_1,\dots,f_n$ are regular and non-vanishing, and hence $\C(X)^{(\b)}\sub\C[U]$. We may shrink $U$ further to assume that $U_0$ is normal, and we still may assume $U$ is $B$-stable.  Now apply Sumihiro's theorem (see for instance \cite{knop1989local}) using normality of $U_0$ and \cref{stability} to find a $B$-stable affine open subvariety $U'$ of $U$.  
	
	Now we claim that $U'$ is a homogeneous $B$-supervariety.  Indeed, if $I\sub \C[U']$ is a nontrivial $B$-stable ideal, then it admits a $B$-eigenfunction $f\in I$. Then $f\in\C(X)^{(\b)}$, so by assumption $f$ is invertible on $U$, and $\C[U']=(f)=I$.  We conclude by \cref{affine_stable_ideals}.
\end{proof}

\section{Quasireductive Supergroups and Hyperborels}

\begin{definition}
	A supergroup $G$ is quasireductive if $G_{0}$ is reductive.  We say a Lie superalgebra $\g$ is quasireductive if it is the Lie superalgebra of a quasireductive supergroup.
\end{definition}

\begin{example}
Many important and heavily studied supergroups are quasireductive, including $GL(m|n)$, $OSP(m|2n)$, $P(n)$, and $Q(n)$.
\end{example}
Despite having an even reductive part, the representation theory of a quasireductive group is almost never semisimple---in fact the only quasireductive supergroup with semisimple finite-dimensional representation theory, which is not a group, is $OSP(1|2n)$.  See \cite{serganova2011quasireductive} for generalities on quasireductive supergroups and their representation theory.

\subsection{Hyperborels}  In order to discuss the notion of a spherical supervariety, it is necessary that we have a well-purposed generalization of Borel subgroup (subalgebra) to the super case.  There are different notions of Borel subalgebras used for quasireductive Lie superalgebras, although the most common one seems to coincide with the definition in section 9.3 of \cite{serganova2011quasireductive}.  We use a different notion that is closer to the definition given in the beginning of chapter 3 of \cite{musson2012lie}, and agrees with this definition when the Cartan subalgebra of $\g$ is purely even.  In order to prevent confusion, we choose to call our subalgebras hyperborels.  
\begin{definition}\label{defn_borel}
	Let $\g$ be quasireductive.  A hyperborel subalgebra of $\g$ is a subalgebra $\b\sub\g$ such that
	\begin{itemize}
		\item $\b_{\ol{0}}$ is a Borel of $\g_{\ol{0}}$ in the usual sense;
		\item $[\b_{\ol{1}},\b_{\ol{1}}]\sub[\b_{\ol{0}},\b_{\ol{0}}]$; and,
		\item $\b$ is maximal with this property.
	\end{itemize}  
\end{definition}
We now give a brief discussion of this definition.
\begin{remark}\label{borel_rmk}\begin{itemize}
		\item Given a hyperborel subalgebra $\b$ and a choice of Cartan subalgebra $\h_{\ol{0}}\sub\b_{\ol{0}}$, we have $\b=\h_{\ol{0}}\ltimes\n$ where $\n$ is a nilpotent ideal.  We call $\n$ the unipotent radical of $\b$.  
		\item We may always conjugate a hyperborel $\b$ by an inner automorphism of $\g$ so that $\b_{\ol{0}}$ is a chosen Borel of $\g_{\ol{0}}$.
	\end{itemize}
\end{remark}

\begin{remark}\label{hyperborel_1_dim}
	By definition, hyperborels are solvable and all irreducible representations of them are one-dimensional (see lemma 1.37 of \cite{cheng2012dualities}).  This property is the primary way in which the notion of hyperborel subalgebra is a generalization of Borel subalgebra for reductive Lie algebras.  Further, it is this property that is of importance for us in the characterization of spherical supervarieties (\cref{spher_char} and \cref{spher_affine_char}).  
\end{remark}

Recall that for Lie superalgebras there is a notion of Cartan subalgebras (see \cite{scheunert1987invariant} and \cite{penkov1994generic}).  For a quasireductive Lie superalgebra, a Cartan subalgebra $\h$ is given by a Cartan subalgebra $\h_{\ol{0}}\sub\g_{\ol{0}}$, and then $\h$ is the centralizer of $\h_{\ol{0}}$ in $\g$.

\begin{definition}
	We say a quasireductive Lie superalgebra $\g$ is Cartan-even if for a Cartan subalgebra $\h\sub\g$, $\h=\h_{\ol{0}}$.  We say a quasireductive supergroup $G$ is Cartan-even if $\operatorname{Lie}(G)$ is.
\end{definition}

The notion of hyperborel is most natural for supergroups and superalgebras which are Cartan-even. If $\g$ is Cartan-even then the notion of hyperborel agrees with the definition of Borel given in \cite{musson2012lie}.  Further, the notion of hyperborel and Borel (as defined in \cite{serganova2011quasireductive}) coincide if $\g$ is one of the following Cartan-even superalgebras: $\g\l(m|n)$, $\s\l(m|n)$ for $m\neq n$ and $(m,n)\neq(1,1)$, $\p\s\l(n|n)$ or $\s\l(n|n)$ for $n\geq 3$, $\p(n)$, $\o\s\p(m|2n)$, or is one of the exceptional basic simple Lie superalgebras.  This is proven in Proposition 4.6.1 of \cite{musson2012lie}.  The case of $\p(n)$ is not considered there, but one can show the notions agree for this superalgebra as well (although they do not agree for the derived subalgebra of $\p(n)$). 

\begin{remark}
	If $\g$ is Cartan-even and $\b$ is a Borel subalgebra of $\g$ (as defined in \cite{serganova2011quasireductive}), then $\b$ is contained in a hyperborel subalgebra.  Indeed, a Borel subalgebra satisfies all the conditions of being a hyperborel but possibly maximality.  
\end{remark}

However if $\g$ is not Cartan-even, for instance $\g$ is the queer Lie superalgebra $\q(n)$, then hyperborels greatly differ from Borels, as they do not contain a Cartan subalgebra.

\begin{remark}
	If $\g$ is quasireductive and $\b$ a hyperborel of $\g$, then for a finite dimensional irreducible representation $V$ of $\g$, $\dim V^{(\b)}\geq 1$ by \cref{hyperborel_1_dim}.  However, it is possible that $\dim V^{(\b)}>1$, and thus we no longer have a bijective correspondence between certain characters of the Borel and finite dimensional irreducible representations. 
	
	Indeed even when $\g$ is Cartan-even this phenomenon can occur; in \cite{serganova2018representations}, a nontrivial central extension of the derived subalgebra of $\p(4)$ is considered, along with an irreducible representation $V_t$ deforming the standard representation of $\p(4)$.  If $t\neq0$, is shown that $\Lambda^2V_t$ is irreducible.  However there is a hyperborel subalgebra given by (in the notation of the paper) $\b=\g_{-2}\oplus\b_{0}\oplus\g_{1}$, where $\b_{0}$ is a Borel subalgebra of $\g_{0}$.  One can check that $\Lambda^2V_t^{(\b)}$ is two-dimensional for any $t$.  
	
	However, if a hyperborel subalgebra $\b$ contains a Borel subalgebra 	then $\dim V^{(\b)}=1$ for an irreducible representation $V$ of $\g$, by highest weight theory.  
\end{remark}

\begin{definition}\label{defn_borel_group}
	If $G$ is quasireductive, we call a subgroup $B$ a hyperborel subgroup if it is connected and gotten by integrating a hyperborel subalgebra $\b$ of $\g$.  If $\n$ is the unipotent radical, we write $N$ for the connected subgroup of $B$ it integrates to in $G$ and call it the maximal unipotent subgroup of $B$.  Finally, we write $T$ for the connected subgroup of $G$ that a chosen Cartan subalgebra $\h_{\ol{0}}\sub\b$ integrates to, which will be a maximal torus of $G_{0}$; we call $T$ a maximal torus of $B$.
\end{definition}

\begin{definition}
	If $X$ is a $G$-variety and $B$ a hyperborel of $G$ we set $\Lambda_B^+(X):=\Lambda_{\b}^+(X)$ and $\Lambda_B(X):=\Lambda_{\b}(X)$ (or simply $\Lambda^+(X)$, resp. $\Lambda(X)$ when there is no confusion).
\end{definition}

There is a canonical identification of weights of $T$ with characters of $B$ via the composition of maps $T\to B\to B/N$.  The algebra $\C[X]^N$ has a natural $T$-action, and $\Lambda_B^+(X)$ are the weights of this action under this identification.  Also observe that neither $\Lambda_B^+(X)$ nor $\Lambda_B(X)$ are a monoid or group in general, due to the presence of nilpotent functions.  For example, consider the action of an even torus on the functions of a purely odd representation of it.

\subsection{$G_0$-equivariant gradings of supervarieties}  
\begin{definition}
	Let $G_0$ be an algebraic group.  For a $G_0$-supervariety $X$, we say it has a $G_0$-equivariant grading if there exists a $G_0$-equivariant sheaf $\MM$ on $X_0$ and a $G_0$-equivariant isomorphism $X\cong\Lambda^\bullet\MM$.
\end{definition}

The following question was considered by Rothstein in \cite{rothstein1993equivariant} in the analytic setting.  Adapting the proof ideas there to the algebraic setting one can prove the following proposition.  A proof is given in the author's PhD thesis.

\begin{prop}\label{equiv_gradings}
	Let $G_0$ be a reductive group, $X$ a $G_0$-supervariety.  If $X$ is graded, then $X$ admits a $G_0$-equivariant grading.  In particular, if $G$ is quasireductive and $X$ is a graded $G$-supervariety, then $X$ admits a $G_0$-equivariant grading.
\end{prop}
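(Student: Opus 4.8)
The plan is to prove this by an averaging argument over $G_0$, with the integration over a compact group used by Rothstein in the analytic category replaced by the vanishing of positive rational cohomology for a linearly reductive group. Note first that the $G_0$-action on $X$ preserves $X_0$ and hence endows $\NN_X=\JJ_X/\JJ_X^2$, and with it $\operatorname{gr}\OO_X:=\bigoplus_i\JJ_X^i/\JJ_X^{i+1}\cong\Lambda^\bullet\NN_X$, with a canonical $G_0$-equivariant structure. The first step is a reformulation: a grading of $X$ is the same data as an isomorphism $\theta\colon\OO_X\to\operatorname{gr}\OO_X$ of sheaves of superalgebras which is compatible with the $\JJ_X$-adic filtrations and induces the identity on the associated graded. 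Such $\theta$ exist because $X$ is graded (compose the given isomorphism $X\cong(|X|,\Lambda^\bullet\NN)$ with $\NN\cong\NN_X$, correcting by an automorphism of $\NN_X$ if needed), and a grading is $G_0$-equivariant, i.e.\ gives a $G_0$-equivariant grading with $\MM=\NN_X$, exactly when its $\theta$ is $G_0$-equivariant. So it suffices to produce a $G_0$-equivariant $\theta$.

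Next I would exhibit the set $\mathrm{Grad}(X)$ of all such $\theta$ as a torsor under a unipotent group with a compatible $G_0$-action. Fixing one $\theta_0$, any other $\theta$ gives $\theta\circ\theta_0^{-1}$, a filtered superalgebra automorphism of $\Lambda^\bullet\NN_X$ inducing the identity on $\operatorname{gr}$; these form a group $\mathcal G$, under which $\mathrm{Grad}(X)$ is a torsor (via $\gamma\cdot\theta=\gamma\circ\theta$). Since $\NN_X$ is coherent, $\Lambda^\bullet\NN_X$ is bounded in $\Z$-degree, so $\mathcal G=\exp\big(\Gamma(X_0,\mathcal D_{\ge1})_{\ol{0}}\big)$ is unipotent, where $\mathcal D_{\ge1}=\bigoplus_{k\ge1}\mathcal D_k$ is the nilpotent sheaf of Lie superalgebras of positive-degree derivations of $\Lambda^\bullet\NN_X$ (those raising the $\Z$-degree by $k\ge1$). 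Each $\mathcal D_k$ is a $G_0$-equivariant coherent sheaf on $X_0$, so $\Gamma(X_0,\mathcal D_k)$ is a rational (locally finite) $G_0$-module; hence $\mathcal G$ has a finite descending filtration $\mathcal G=\mathcal G_1\supseteq\mathcal G_2\supseteq\cdots\supseteq\mathcal G_{N+1}=\{1\}$ by $G_0$-stable normal subgroups with vector-group quotients $\mathcal G_k/\mathcal G_{k+1}\cong\Gamma(X_0,(\mathcal D_k)_{\ol{0}})$. Finally, the natural $G_0$-action on $\mathrm{Grad}(X)$, sending $\theta$ to the conjugate of $\theta$ by the equivariant structures on $\OO_X$ and $\operatorname{gr}\OO_X$, satisfies $g\cdot(\gamma\theta)=(g\cdot\gamma)(g\cdot\theta)$ for $\gamma\in\mathcal G$, and its fixed points are exactly the $G_0$-equivariant gradings.

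To conclude I would run the averaging by induction on the length $N$ of the filtration of $\mathcal G$. When $N=1$, so $\mathcal G$ is a single vector group $V$: choosing any $\theta_0\in\mathrm{Grad}(X)$, the assignment $g\mapsto c(g)$ determined by $g\cdot\theta_0=c(g)\cdot\theta_0$ is a $1$-cocycle $G_0\to V$; since $G_0$ is reductive, hence linearly reductive as the base field has characteristic zero, and $V$ is a rational $G_0$-module, $H^1(G_0,V)=0$, so $c$ is a coboundary and translating $\theta_0$ by a suitable element of $V$ yields a $G_0$-fixed grading. In general, pass to the $\mathcal G/\mathcal G_N$-torsor $\mathrm{Grad}(X)/\mathcal G_N$, obtain a $G_0$-fixed point there by the inductive hypothesis (the filtration of $\mathcal G/\mathcal G_N$ has length $N-1$), and note that its preimage in $\mathrm{Grad}(X)$ is a $G_0$-stable torsor under the vector group $\mathcal G_N$, to which the base case applies. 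The final ``in particular'' assertion is then immediate: a quasireductive $G$ has $G_0$ reductive, and a $G$-supervariety is in particular a $G_0$-supervariety.

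I expect the main work — the point where Rothstein's analytic arguments must genuinely be redone in the algebraic category — to be the middle paragraph: showing that $\mathrm{Grad}(X)$ is a torsor under the (possibly infinite-dimensional, since $X_0$ may fail to be affine) unipotent group $\mathcal G$, that $\mathcal G$ carries the claimed finite filtration with rational $G_0$-module subquotients, and that the two $G_0$-actions are compatible. The key inputs are that $\exp$ makes sense for a nilpotent sheaf of Lie algebras in characteristic zero, and that global sections of a $G_0$-equivariant quasicoherent sheaf on a $G_0$-scheme of finite type form a rational $G_0$-module; granting these, the linear reductivity of $G_0$ does the rest.
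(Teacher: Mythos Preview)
Your proposal is correct and follows exactly the approach the paper describes: adapting Rothstein's analytic argument by replacing averaging over a compact form with the vanishing of rational cohomology $H^1(G_0,V)=0$ for a linearly reductive group. The paper itself does not spell out the proof, deferring instead to Rothstein and the author's thesis, so your sketch in fact supplies more detail than the paper; the torsor-under-unipotent-group formulation and the induction on the filtration length are the expected ingredients.
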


\section{Spherical Supervarieties}

Let $G$  be quasireductive.
\begin{definition}
	We say a $G$-supervariety $X$ is spherical if there exists a hyperborel $B$ of $G$ with an open orbit on $X$.  If a hyperborel $B$ has an open orbit on $X$, we say that $X$ is $B$-spherical.
\end{definition}
\begin{remark}
	\begin{itemize}
		\item If a $G$-supervariety $X$ is spherical, then the $G_0$ variety $X_0$ is also spherical.
		\item Note that a spherical supervariety need not be spherical with respect to every hyperborel; in fact if $\g$ is basic classical this occurrence would be a degeneracy.
	\end{itemize}
\end{remark}

\begin{prop}\label{spher_char}
	Let $G$ be quasireductive, $B$ a hyperborel of $G$, and $X$ a supervariety.  Then $X$ is $B$-spherical if and only if $\C(X)^{(\b)}$ is a multiplicity-free $\b$-module whose nonzero elements are non-nilpotent.
\end{prop}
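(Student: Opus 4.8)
The plan is to prove both directions, using the tools assembled earlier in the paper. The forward direction is essentially already contained in \cref{ratl_funct_open_orbit} applied to $B$, so I would begin there: if $X$ is $B$-spherical then, since the open $B$-orbit is an open subvariety on which $B$ acts homogeneously, I would restrict to it and apply the reasoning of \cref{ratl_funct_open_orbit}. More directly, any $\b$-eigenfunction $f\in\C(X)^{(\b)}$ is, after restricting to an affine open subvariety $\Spec A$ of the open orbit on which $f$ is regular, an eigenfunction generating a $\b$-stable ideal; by \cref{affine_stable_ideals} (together with \cref{algebra_action_remark}) this ideal must be all of $A$, so $f$ is invertible on the open orbit, hence non-nilpotent. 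Multiplicity-freeness of $\C(X)^{(\b)}$ then follows: two $\b$-eigenfunctions $f_1,f_2$ of the same weight $\lambda$ have ratio $f_1/f_2\in\C(X)^{\b}$, and the same argument applied to $f_1/f_2-c$ for $c=(f_1/f_2)(x)$ shows $\C(X)^{\b}=\C$, whence $f_1$ and $f_2$ are proportional.

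For the converse, suppose $\C(X)^{(\b)}$ is multiplicity-free with all nonzero elements non-nilpotent. Since $B$ is connected with $\b=\operatorname{Lie}(B)$ solvable and satisfying $[\b_{\ol 1},\b_{\ol 1}]\sub[\b_{\ol 0},\b_{\ol 0}]$ (this is part of \cref{defn_borel}), the hypotheses of \cref{ratl_funct_open_orbit} are met, and it directly yields that $X$ has an open $B$-orbit, i.e.\ $X$ is $B$-spherical. So the converse is almost immediate modulo citing \cref{ratl_funct_open_orbit}; the only thing to check is that a hyperborel $B$ qualifies as a ``solvable connected algebraic supergroup'' in the sense required there, which follows since $\b_{\ol 0}$ is a Borel of $\g_{\ol 0}$ (hence solvable) and the bracket condition is exactly the one imposed in \cref{defn_borel}.

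The main obstacle — and the place where I expect to spend the most care — is the reduction from the generic statement about $\C(X)$ to the affine-local statements where \cref{affine_stable_ideals} applies: one needs to pass to a suitable $B$-stable affine open subvariety without losing $B$-stability, which in \cref{ratl_funct_open_orbit} is handled via Sumihiro's theorem and \cref{stability} after shrinking so that the even part is normal. I would simply invoke \cref{ratl_funct_open_orbit} wholesale rather than reprove this, so that the proof of \cref{spher_char} becomes short: forward direction by the eigenfunction/stable-ideal argument above, converse by direct citation. One small point worth stating explicitly is the equivalence, noted at the end of \cref{ratl_funct_open_orbit}, between ``$\C(X)^{(\b)}$ multiplicity-free with non-nilpotent nonzero elements'' and ``$\C(X)^{\b}=\C$ and $\C(X)^{(\b)}$ an integral domain'', since that is the form in which the hypothesis is most conveniently used.
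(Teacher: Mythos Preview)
Your proposal is correct and aligns with the paper's approach: the paper's entire proof is the single sentence ``This follows immediately from \cref{ratl_funct_open_orbit},'' so your plan to cite \cref{ratl_funct_open_orbit} for the converse is exactly what the paper does. You are more careful than the paper in spelling out the forward direction (non-nilpotence via invertibility on the open orbit, multiplicity-freeness via $\C(X)^{\b}=\C$), which the paper leaves implicit; your argument for it is essentially the content of the proof of Proposition~3.13 and of \cref{alg_action_spher} in the appendix, so nothing new is needed.
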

\begin{proof}
	This follows immediately from \cref{ratl_funct_open_orbit}.
\end{proof}
\subsection{Affine Spherical Supervarieties} In the classical case we have a characterization of affine spherical varieties by the fact the $\C[X]$ is a multiplicity-free representation.  One might hope that this generalizes to the super case.  Of course there is a first issue that for supergroups completely reducibility is a rare phenomenon to begin with.  But one might hope that perhaps $\C[X]^N$ being multiplicity-free as a $T$-module is sufficient.  This turns out to not be the case as the next examples demonstrate.

\begin{example}
	\begin{itemize}
		\item Consider the action of $GL(0|n)$ on $\C^{0|n}$ by the standard representation.  The algebra of functions is $\Lambda^\bullet(\C^n)^*$, which is completely reducible and multiplicity-free. However, there is only one point and the orbit of it under the whole group is itself, so this space is not spherical.
		\item An example which has a nontrivial even part is given by considering $G=OSP(1|2)$ and letting $X=OSP(1|2)/T$, where $T$ is a maximal torus of $G_{0}$.  By the representation theory of $OSP(1|2)$ and Frobenius reciprocity, $\C[X]\cong\bigoplus\limits_{n\geq 0}\Pi^nL(n)$, where $L(n)$ is the irreducible representation of highest weight $n$ with even highest weight vector.  Hence $\C[X]$ is completely reducible and multiplicity-free.  However, no hyperborel admits an open orbit since the odd dimension of $X$ is 2 while the odd dimension of any hyperborel is 1.
	\end{itemize} 
\end{example}
The next theorem demonstrates that the issue with the above two spaces is that some of the highest weight functions are nilpotent.
\begin{thm}\label{spher_affine_char}
	Let $X$ be an affine $G$-supervariety, $B$ a hyperborel of $G$ with maximal unipotent subgroup $N$ and maximal torus $T$.  Then the following are equivalent:
	\begin{enumerate}
		\item $X$ is spherical for $B$.
		\item $X_{0}$ is spherical for $B_{0}$, and every nonzero $B$-highest weight function in $\C[X]$ is non-nilpotent.
		\item Every nonzero $B$-highest weight function in $\C[X]$ is non-nilpotent, and $\dim\C[X]^{N}_{\lambda}\leq 1$ for all weights $\lambda$ of $T$.
		\item $\C[X]^N$ is an even commutative algebra without nilpotents, and the natural $T$-action is multiplicity-free.
	\end{enumerate}
\end{thm}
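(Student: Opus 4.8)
The plan is to prove the cycle of implications $(1)\Rightarrow(4)\Rightarrow(3)\Rightarrow(2)\Rightarrow(1)$, leaning on \cref{spher_char} and the structure of hyperborels (in particular \cref{hyperborel_1_dim}, that irreducible $\b$-representations are one-dimensional), together with the fact that for an affine $X$ the algebra $\C[X]$ embeds into $\C(X)$ and that $\C[X]^N = \C[X]^{(\b)}$ as $T$-modules under the identification of $T$-weights with characters of $B$.

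First I would establish $(1)\Rightarrow(4)$. If $X$ is $B$-spherical, then by \cref{spher_char} the space $\C(X)^{(\b)}$ is multiplicity-free with no nonzero nilpotents, and it is a subalgebra of $\C(X)$. Since $\C[X]\hookrightarrow\C(X)$ is $B$-equivariant, $\C[X]^{(\b)} = \C[X]\cap\C(X)^{(\b)}$, so $\C[X]^{(\b)}$ inherits multiplicity-freeness and the absence of nonzero nilpotents; moreover every $\b$-eigenvector is non-nilpotent, hence even (an odd element squares to zero), so $\C[X]^{(\b)}=\C[X]^N$ is an even commutative algebra without nilpotents, and the $T$-action on it is multiplicity-free. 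Next, $(4)\Rightarrow(3)$ is immediate: multiplicity-freeness of the $T$-action on $\C[X]^N$ gives $\dim\C[X]^N_\lambda\le 1$, and since $\C[X]^N$ has no nilpotents, in particular no nonzero $B$-highest weight function is nilpotent.

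For $(3)\Rightarrow(2)$ I would argue as follows. The condition $\dim\C[X]^N_\lambda\le 1$ says precisely that $\C[X]^N$, viewed as a $T$-module, is multiplicity-free; restricting to the even part $\C[X_0]^{N_0}$, which is a $T$-submodule (since the embedding $X_0\hookrightarrow X$ induces a surjection $\C[X]\onto\C[X_0]$ compatible with the $B$-action, hence $\C[X_0]^{N_0}$ is a quotient, but one checks it is also realized as a subspace via the non-nilpotent highest-weight functions), we get that $\C[X_0]^{N_0}$ is multiplicity-free as a $T$-module. By the classical characterization of affine spherical varieties (an affine $G_0$-variety is $B_0$-spherical iff $\C[X_0]^{N_0}$ is multiplicity-free for $T$), $X_0$ is $B_0$-spherical. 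The non-nilpotence clause in $(2)$ is part of the hypothesis of $(3)$, so this direction reduces to the classical fact plus bookkeeping about how $\C[X_0]^{N_0}$ sits inside $\C[X]^N$. The delicate point here — and I expect this to be the main obstacle — is justifying that multiplicity-freeness descends correctly between $\C[X]^N$ and $\C[X_0]^{N_0}$: a priori $X_0$ may be non-reduced (as the excerpt warns), so one must be careful to work on a $B_0$-stable open subscheme where $X_0$ is integral, or to argue that a $T$-weight vector in $\C[X_0]^{N_0}$ lifts to a non-nilpotent $B$-eigenfunction in $\C[X]^N$ using hypothesis $(3)$, so that distinct weights in $\C[X_0]^{N_0}$ already appear with multiplicity one upstairs.

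Finally, for $(2)\Rightarrow(1)$ I would use \cref{spher_char}: it suffices to show $\C(X)^{(\b)}$ is multiplicity-free with non-nilpotent nonzero elements. Given a nonzero $\b$-eigenfunction $f\in\C(X)^{(\b)}$, write $f = p/q$ with $p,q\in\C[X]$; one can arrange $p,q$ to be $\b$-eigenfunctions (by a standard argument clearing denominators within the $\b$-action, using that $\b$ acts locally finitely on $\C[X]$ and that eigenvectors of weight $\mu$ span a line-bundle-like structure — here the one-dimensionality of $\C[X]^N_\mu$ from $(2)$ combined with $X_0$ spherical controls the situation). Since $X_0$ is $B_0$-spherical, $\C(X_0)^{(\b_0)}$ is multiplicity-free; a nilpotent $\b$-eigenfunction on $X$ would, after multiplying by a suitable power, either vanish or reduce to a nilpotent on $X_0$, contradicting either non-nilpotence of the highest-weight functions in $\C[X]$ or the fact that $\C(X_0)$ is a domain on the open locus where $X_0$ is integral. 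The multiplicity-freeness of $\C(X)^{(\b)}$ then follows because two eigenfunctions of the same weight have ratio a $\b$-invariant rational function, and $\C(X)^{\b}=\C$ is forced by $X_0$ spherical (so $\C(X_0)^{\b_0}=\C$) together with a filtration argument on $\C(X)$ by powers of the nilpotent ideal, whose associated graded involves only $\C(X_0)$-modules. Assembling these pieces gives the hypotheses of \cref{spher_char}, completing the cycle.
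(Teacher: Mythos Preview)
Your cycle runs $(1)\Rightarrow(4)\Rightarrow(3)\Rightarrow(2)\Rightarrow(1)$, whereas the paper proves $(1)\Rightarrow(2)\Rightarrow(3)\Rightarrow(4)\Rightarrow(1)$. The implications $(1)\Rightarrow(4)$ and $(4)\Rightarrow(3)$ are fine, and your sketch of $(2)\Rightarrow(1)$ via \cref{spher_char} can be made rigorous along the lines you indicate: the ideal $\{q\in\C[X]:qf\in\C[X]\}$ is $B$-stable, so contains a $\b$-eigenvector $q$, giving $f=p/q$ with $p,q\in\C[X]^{(\b)}$; non-nilpotence of $p,q$ then forces non-nilpotence of $f$, and for the invariants you observe that any $h\in\C(X)^{\b}$ restricts to a constant on $X_0$, so $h-c$ is a nilpotent $\b$-eigenvector, hence (after clearing denominators as above) produces a nilpotent $\b$-eigenfunction in $\C[X]$, which must vanish. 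This is a somewhat more elaborate route than the paper's geometric $(4)\Rightarrow(1)$ (pass to the $B$-stable open locus where finitely many chosen eigenfunctions are invertible and check the orbit map is injective on functions), but it works.

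The genuine gap is $(3)\Rightarrow(2)$. You want to deduce that $\C[X_0]^{N_0}$ is multiplicity-free from the multiplicity-freeness of $\C[X]^N$, but the natural comparison map $\C[X]^{(\b)}\to\C[X_0]^{(\b_{\ol 0})}$ is only an \emph{injection} (thanks to non-nilpotence), not a surjection: there is no mechanism, under hypothesis~$(3)$ alone, for lifting a $B_0$-eigenfunction on $X_0$ to a $B$-eigenfunction on $X$. So knowing that each weight occurs at most once upstairs says nothing about multiplicities downstairs. Your parenthetical ``one checks it is also realized as a subspace'' is exactly the unjustified step. The paper avoids this by running the implication in the other direction: in $(2)\Rightarrow(3)$, one already \emph{assumes} $X_0$ is spherical, hence $\C[X_0]^{(\b_{\ol 0})}$ is multiplicity-free, and then the same injection forces $\C[X]^{(\b)}$ to be multiplicity-free. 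Reversing your middle two implications to match the paper's order repairs the argument with no new ideas required.
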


\begin{proof}
	(1)$\implies$(2): Let $x\in X(\C)$ be such that $a_x:B\to X$ is a submersion, so that $a_x^*$ is injective.  In $\C[B]$, all $B$-highest weight functions are non-nilpotent, and therefore the same must be true of the functions on $X$.
	
	(2)$\implies$(3): Since $X_{0}$ is spherical for $B_{0}$, we have $\dim\C[X_{0}]^{(\b_{\ol{0}})}_{\lambda}\leq 1$ for all $\lambda$.  Since the $B$-highest weight functions are non-nilpotent, the restriction map $\C[X]^{(\b)}\to\C[X_0]^{(\b_{\ol{0}})}$ is injective, and we are done.
	
	(3)$\implies$(4): We see that $\C[X]^N$ is the subalgebra generated by the $B$-highest weight functions, so this is clear.
	
	(4)$\implies$(1): Let $S$ be the submonoid of the character lattice of $T$ determined by $\C[X]^N$.  Then because group generated by $S$ is finitely generated, of rank say $m$, there exists weights $\lambda_1,\dots,\lambda_m\in S$ such that the monoid generated by $S$ and $-\lambda_1,\dots,-\lambda_m$ is a group.  Then if we let $U$ be the non-vanishing locus of $f_{\lambda_1},\dots,f_{\lambda_m}$, all $B$-eigenfunctions in $\C[U]$ will be invertible.  Further, this open subscheme $U$ will be $B$-stable.  Choose a point $x\in U(\C)$, and consider the orbit map $a_x:B\to X$.  Since all $f_{\lambda}$ become units on $U$, they must not be in the kernel of $a_x^*$.  But if $a_x^*$ is not injective, the kernel will contain a $B$-highest weight function, a contradiction.  Therefore $a_x$ must be a submersion, and so $X$ is spherical.  
\end{proof}

\begin{definition}
	If a $G$-supervariety $X$ is $B$-spherical, define the rank of $X$ to be the rank of the lattice $\Lambda_B(X)$.
\end{definition}
A corollary of the proof of the above proposition is the following.  
\begin{cor}
	If $X$ is $B$-spherical of rank $m$, there exists $m$ $B$-highest weight functions $f_{\lambda_1},\dots,f_{\lambda_m}\in\C[X]$ such that their common non-vanishing set is the open $B$-orbit.
\end{cor}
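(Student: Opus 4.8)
Since $X$ is an affine $B$-spherical supervariety of rank $m$, \cref{spher_affine_char} gives that $\C[X]^N$ is a commutative domain with no nilpotents and multiplicity-free under $T$; thus $\C[X]^N$ is the monoid algebra $\C[S]$ of the weight monoid $S:=\Lambda_B^+(X)$, and every nonzero $B$-highest weight function of $\C[X]$ is even, non-nilpotent, has weight in $S$, and equals (up to a scalar) the unique such function $f_\mu$ of its weight $\mu$. The plan is to re-run the argument for (4)$\implies$(1) of \cref{spher_affine_char}, but keeping track of how many eigenfunctions are needed. A preliminary observation is that the group $\langle S\rangle$ generated by $S$ equals $\Lambda_B(X)$ and hence is free of rank $m$: one inclusion is clear, and for the other, given $f\in\C(X)^{(\b)}$ of weight $\nu$ the ideal $\{a\in\C[X]:af\in\C[X]\}$ is a nonzero $B$-stable ideal of $\C[X]$, so by the argument used in the proof of \cref{ratl_funct_open_orbit} it contains a $B$-highest weight function $q$, of some weight $\xi$; then $fq$ is a nonzero $B$-highest weight function of weight $\nu+\xi$, so $\nu=(\nu+\xi)-\xi\in\langle S\rangle$.

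Next I would choose $\lambda_1,\dots,\lambda_m\in S$ whose images form a $\Q$-basis of $\langle S\rangle\otimes_{\Z}\Q$ (possible, since $S$ spans this $m$-dimensional space) and put $U:=D(f_{\lambda_1}\cdots f_{\lambda_m})$. The key combinatorial point is that $S+\N(-\lambda_1)+\dots+\N(-\lambda_m)$ is already all of $\langle S\rangle$: if $d$ is the finite index of $\langle\lambda_1,\dots,\lambda_m\rangle$ in $\langle S\rangle$ and $\mu=s_1-s_2\in\langle S\rangle$ with $s_i\in S$, then writing $ds_2=\sum_i a_i\lambda_i$ with $a_i\in\Z$ one has $-s_2=\bigl[(d-1)s_2+\sum_{a_i<0}(-a_i)\lambda_i\bigr]+\sum_{a_i>0}a_i(-\lambda_i)$, with the bracketed term in $S$. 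In terms of functions: for every $\mu\in S$ there are $s\in S$ and $b_i\in\N$ with $\mu+s=\sum_i b_i\lambda_i$, hence $f_\mu f_s=c\prod_i f_{\lambda_i}^{b_i}$ for a scalar $c\neq0$ by multiplicity-freeness, and so $f_\mu$ divides a unit, and thus is a unit, on $U$. Consequently every nonzero $B$-highest weight function of $\C[X]$ is invertible on $U$.

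Finally I would verify $U=\Omega$, the open $B$-orbit, essentially as in \cref{spher_affine_char}. The set $U$ is nonempty because $\prod_i f_{\lambda_i}$ is a nonzero, hence non-nilpotent, element of the domain $\C[X]^N$, and it is $B$-stable because the restrictions of the $f_{\lambda_i}$ to $X_0$ are nonzero $B_0$-eigenfunctions, so that $|U|$ is $B_0$-stable and \cref{stability} applies. Given $x\in U(\C)$: if $a_x\colon B\to X$ were not a submersion, then by \cref{submersion_injec} the kernel of $a_x^*\colon\C[X]\to\C[B]$ would be a nonzero $B$-stable ideal, hence would contain a nonzero $B$-highest weight function $f$; but $f$ is a unit on $U$, so $f(x)\neq0$ and $a_x^*(f)(e_B)=f(x)\neq0$, contradicting $f\in\ker a_x^*$. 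Hence $U\subseteq\Omega$. Conversely $\Omega$ is a single $B$-orbit, being open in the irreducible $|X|$, and each $f_{\lambda_i}$ does not vanish identically on the dense set $\Omega$ and therefore vanishes nowhere on it; so $\Omega\subseteq U$, and $U=\Omega$ is cut out, as a non-vanishing set, by the $m$ highest weight functions $f_{\lambda_1},\dots,f_{\lambda_m}$.

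The only genuinely new input beyond \cref{spher_affine_char}, and the step I expect to need the most care, is the middle paragraph: getting by with exactly $m=\operatorname{rank}\Lambda_B(X)$ functions rather than a full (finite) generating family of the monoid $S$. This rests on the two elementary facts that $\langle\Lambda_B^+(X)\rangle=\Lambda_B(X)$ and that $m$ elements of $S$ forming a $\Q$-basis of $\langle S\rangle\otimes\Q$ already invert the monoid algebra $\C[S]$; once these are in place the rest is a transcription of the proof of \cref{spher_affine_char}.
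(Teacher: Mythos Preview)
Your argument is correct and follows the paper's intended route: the paper states this result literally as ``a corollary of the proof of'' \cref{spher_affine_char}, and your write-up is a careful reconstruction of that (4)$\Rightarrow$(1) argument together with the reverse inclusion $\Omega\subseteq U$ (which the paper leaves implicit).

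The one point worth highlighting is your first paragraph. In the paper's proof of (4)$\Rightarrow$(1) the integer $m$ is introduced as the rank of the group generated by $S=\Lambda_B^+(X)$, whereas the corollary speaks of the rank of $X$, defined as the rank of $\Lambda_B(X)$. For these to coincide one needs $\langle\Lambda_B^+(X)\rangle=\Lambda_B(X)$. The paper only establishes this equality later, in the discussion culminating in \cref{weight_lattice_dense}, and there under the additional hypothesis that $X$ is graded. Your denominator-ideal argument proves it directly and in full generality, which is a genuine addition beyond what the paper writes down at this point. Similarly, the combinatorial check that $m$ weights forming a $\Q$-basis of $\langle S\rangle\otimes\Q$ already suffice to make every element of $\C[S]$ invertible is something the paper asserts without justification; your explicit computation with the index $d$ fills that in. So the approach is the same, but your version is strictly more complete.
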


\begin{cor}\label{socle_mult_free}
	If $X$ is spherical, the socle of $\C[X]$ is multiplicity free.
\end{cor}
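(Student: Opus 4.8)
The plan is to deduce the corollary from the one-dimensionality of the $B$-weight spaces of functions, together with the elementary fact that every nonzero finite-dimensional module over a hyperborel carries a $\b$-eigenvector.

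First I would choose a hyperborel $B$ of $G$ having an open orbit on $X$ and set $\b=\operatorname{Lie}(B)$. By \cref{spher_char}, $\C(X)^{(\b)}$ is a multiplicity-free $\b$-module. Since restriction of functions on a supervariety is injective, the natural $\b$-equivariant map $\C[X]=\Gamma(X,\OO_X)\to\C(X)$ embeds $\C[X]^{(\b)}$ into $\C(X)^{(\b)}$, so $\C[X]^{(\b)}$ is multiplicity-free as well; equivalently, $\dim\C[X]^{(\b)}_\lambda\le 1$ for every character $\lambda$ of $\b$. (When $X$ is affine this is just condition (3) of \cref{spher_affine_char}, via the identification $\C[X]^{(\b)}=\C[X]^N$ with its $T$-grading.)

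Next I would record two facts about the $G$-module $\C[X]$. Being a rational, hence locally finite, $G$-module, $\C[X]$ has the property that every simple $G$-submodule $S$ is finite-dimensional. And because $\b$ is solvable with $[\b_{\ol 1},\b_{\ol 1}]\sub[\b_{\ol 0},\b_{\ol 0}]$, every nonzero finite-dimensional $\b$-module has a one-dimensional submodule (lemma 1.37 of \cite{cheng2012dualities}, cf.\ \cref{hyperborel_1_dim}); applied to $S$ this yields a character $\lambda$ with $S^{(\b)}_\lambda\ne 0$.

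The main step is then a short contradiction argument. If the socle of $\C[X]$ were not multiplicity-free there would be two distinct simple $G$-submodules $S\cong S'$ of $\C[X]$; distinctness and simplicity give $S\cap S'=0$. Pick $\lambda$ with $S^{(\b)}_\lambda\ne 0$; since any $G$-module isomorphism $S\to S'$ is automatically $\g$-equivariant, also $(S')^{(\b)}_\lambda\ne 0$. Nonzero $\b$-eigenvectors of weight $\lambda$, one from $S$ and one from $S'$, are then linearly independent elements of $\C[X]^{(\b)}_\lambda$, contradicting the bound above. Passing between ``multiplicity $\ge 2$ in the socle'' and ``two distinct isomorphic simple submodules'' is routine, since the $S$-isotypic part of a semisimple module is a direct sum of copies of $S$ and contains distinct simple submodules as soon as that multiplicity exceeds one. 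I do not expect a genuine obstacle here; the one point to handle with care is that, as emphasized in the text, a simple $G$-module may satisfy $\dim S^{(\b)}>1$, so one cannot argue by counting multiplicities inside a single copy — which is exactly why the argument is phrased via two submodules sharing a common $\b$-weight rather than by bounding $\dim\C[X]^{(\b)}_\lambda$ against isotypic multiplicities directly.
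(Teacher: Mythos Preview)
Your proof is correct and follows essentially the same line as the paper: if the socle had multiplicity, two isomorphic simple submodules would yield two $\b$-eigenfunctions of the same weight in $\C[X]$, contradicting that $\C[X]^{(\b)}$ is multiplicity-free. The only cosmetic difference is that the paper cites condition~(3) of \cref{spher_affine_char} directly, whereas you obtain $\dim\C[X]^{(\b)}_\lambda\le 1$ by embedding into $\C(X)^{(\b)}$ via \cref{spher_char}; this is the same content (and, as you note, even works without the affineness hypothesis).
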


\begin{proof}
	Suppose that an irreducible representation $L$ shows up with multiplicity greater than $1$.  If $B$ is a hyperborel for which $X$ is $B$-spherical, there will be two $B$-eigenfunctions of the same weight in $\C[X]$.  This contradicts (3) of \cref{spher_affine_char}.
\end{proof}

Now suppose that $X$ is an affine $B$-spherical supervariety and $U$ is the open $B$-orbit.  By the reasoning given in the proof of \cref{ratl_funct_open_orbit}, we know that all rational $\b$-eigenfunctions will be regular (and in fact non-vanishing) on $U$.  Hence $\C[U]^N=\C(X)^{(\b)}$, and because these functions are all non-nilpotent we have 
\[
\C(X)^{(\b)}=\C[U]^N\cong \C[U_{0}]^{N_0}=\C(X_0)^{\b_{\ol{0}}}
\] 
by restriction of functions.  Further, these algebras are all isomorphic to group algebras on $\Lambda_B(X)$, a finitely generated free abelian subgroup of the character lattice of $T$.

Now on all of $X$, restriction induces an injective map $\C[X]^N\to \C[X_0]^{N_0}$, and hence an inclusion $\Lambda^+_B(X)\sub\Lambda^+_{B_0}(X_0)$, and thus 	$\Lambda^+_{B}(X)$ will be a submonoid of $\Lambda^+_{B_0}(X_0)$.  Note that $\C[X]^N$ is the monoid algebra on $\Lambda_{B}^+(X)$ and $\C[X_{0}]^{N_0}$ is the monoid algebra on $\Lambda_{B_0}^+(X_0)$.

It is a classical fact about spherical varieties that $\C[X_{0}]^{N_{0}}$  is finitely generated, so choose generators $g_1,\dots,g_n$ which are $\b_{\ol{0}}$-eigenfunctions.  Note that $U_0$ is precisely the non-vanishing locus of these functions.  We may uniquely lift these to $\b$-eigenfunctions $f_{1},\dots,f_{n}$ on $U$.  

Let us now assume in addition that $X$ is graded. By \cref{equiv_gradings} we may choose a $G_0$-equivariant grading of $X$.  Thus we may write $\C[X]=\Lambda^\bullet M$, where $M$ is a finitely generated $G_0$-equivariant $\C[X_0]$-module.  Let us assume the largest non-zero exterior power of $M$ is $q$.  Then we may write 
\[
f_{i}=g_{i}+m_{i1}+\dots+m_{iq}\text{ where }m_{ij}\in\Lambda^jM_{g_1\cdots g_n}.
\]
Here $M_{g_1\cdots g_n}$ is the localization of $M$ to the non-vanishing locus of $g_1,\dots,g_n$.  We may do this because since $f_i$ is a $\b$-eigenvector, each $m_{ij}$ must be a $\b_{\ol{0}}$-eigenvector and it must be regular on the open $B_0$-orbit.  Now the obstruction to regularity of $f_i$ is the poles of $m_{ij}$ along $g_1\cdots g_n=0$.  For each $m_{ij}$, there exists $q_{ij}\in\N$ such that $(g_1\cdots g_k)^{q_{ij}}m_{ij}\in\Lambda^jM$.  By choosing an integer $p$ larger than $q+\max\limits_{i,j}q_{ij}$, we now have:

\begin{prop}\label{weight_lattice}
	If $X$ is graded then there exists an integer $p>0$ such that $f_1^{r_1}\cdots f_n^{r_n}$ is regular whenever $r_1,\dots,r_n\geq p$.
\end{prop}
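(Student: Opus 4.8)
The plan is to work in the exterior-algebra grading $\C[X]=\Lambda^\bullet M$ (available since $X$ is graded, and $G_0$-equivariantly so by \cref{equiv_gradings}) and expand $f_1^{r_1}\cdots f_n^{r_n}$ there: the vanishing $\Lambda^jM=0$ for $j>q$ will reduce the expansion to a finite sum, and then each term of that sum becomes regular once the $r_i$ are large.

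\textbf{Setting up the factors.} First I would pin down the shape of the factors. Each $f_i$ lies in $\C(X)^{(\b)}$, and this algebra is purely even --- an odd $\b$-eigenfunction would square to zero, contradicting \cref{spher_char} --- so $f_i$ is even, and so are its exterior-degree-$0$ part $g_i$ and $h_i:=f_i-g_i=m_{i1}+\cdots+m_{iq}\in\bigoplus_{j\ge1}\Lambda^jM_{g_1\cdots g_n}$. Being even, all of these are central in $\C[U]=\Lambda^\bullet M_{g_1\cdots g_n}$. Put $Q:=\max_{i,j}q_{ij}$, so that $(g_1\cdots g_n)^Q h_i\in\Lambda^\bullet M$ for every $i$.

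\textbf{The expansion.} The main step is the expansion. Since the $g_i$ and $h_i$ commute, the multinomial theorem gives
\[
f_1^{r_1}\cdots f_n^{r_n}=\sum_{k_1,\dots,k_n\ge0}\Big(\prod_i\binom{r_i}{k_i}\Big)\Big(\prod_i g_i^{\,r_i-k_i}\Big)\Big(\prod_i h_i^{k_i}\Big),
\]
and the crucial observation is that $\prod_i h_i^{k_i}$ lies in $\bigoplus_{j\ge k_1+\cdots+k_n}\Lambda^jM_{g_1\cdots g_n}$, since $\Lambda^aM\cdot\Lambda^bM\subseteq\Lambda^{a+b}M$ and each $h_i$ has exterior degree $\ge1$; this vanishes whenever $k_1+\cdots+k_n>q$. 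Hence only the finitely many multi-indices with $\sum_i k_i\le q$ contribute. For such a multi-index, $\prod_i h_i^{k_i}$ is a product of at most $q$ of the $h_i$, so $(g_1\cdots g_n)^{qQ}\prod_i h_i^{k_i}\in\Lambda^\bullet M$; consequently
\[
\Big(\prod_i g_i^{\,r_i-k_i}\Big)\Big(\prod_i h_i^{k_i}\Big)=\Big(\prod_i g_i^{\,r_i-k_i-qQ}\Big)\Big[(g_1\cdots g_n)^{qQ}\prod_i h_i^{k_i}\Big]\in\C[X]
\]
as soon as $r_i-k_i-qQ\ge0$ for every $i$ --- the first factor then lying in $\C[X_0]=\Lambda^0M$ and the bracket in $\Lambda^\bullet M$. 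Since $k_i\le q$ on surviving multi-indices, it follows that with $p:=q(Q+1)$ (or any larger integer) every surviving term, and hence $f_1^{r_1}\cdots f_n^{r_n}$ itself, is regular as soon as $r_1,\dots,r_n\ge p$.

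\textbf{Main obstacle.} The step I expect to be the real content is the denominator bookkeeping above: one must use $\Lambda^{>q}M=0$ to bound, uniformly in the $r_i$, how many $h$-factors can occur, and one must notice that it is the entire product over $i$ that supplies high enough powers of every $g_\ell$ to absorb $(g_1\cdots g_n)^{qQ}$ --- no individual $f_i^{r_i}$ need be regular for any $r_i$, since $h_i$ generally has poles along each $g_\ell=0$.
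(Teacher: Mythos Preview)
Your proof is correct and follows exactly the approach the paper sketches: expand the product in the $G_0$-equivariant grading $\C[X]=\Lambda^\bullet M$, use $\Lambda^{>q}M=0$ to bound the number of higher-degree factors by $q$, and then absorb the resulting poles with the available powers of the $g_\ell$. Your write-up is in fact more careful than the paper's one-line proof --- in particular your bound $p=q(Q+1)$ is what the expansion actually yields, whereas the paper's stated threshold $p>q+\max_{i,j}q_{ij}$ appears to be a slip (each of the $\le q$ factors $m_{ij}$ can contribute a pole of order up to $Q$ along every $g_\ell$, not just along $g_i$).
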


\begin{proof}
	Expanding out the product, one sees that for any integer $p$ chosen as described in the paragraph before the proposition, the poles will be resolved.
\end{proof}

\begin{cor}\label{weight_lattice_dense}
	If $X$ is graded then the set $\Lambda^+_B(X)$, which is a submonoid of $\Lambda^+_{B_0}(X_0)$, generates $\Lambda_B(X)=\Lambda_{B_0}(X_0)$ as a group.  Further it is Zariski dense in the vector space spanned by its weights.
\end{cor}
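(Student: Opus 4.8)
The plan is to use \cref{weight_lattice} to produce, directly on $X$, enough $B$-highest weight functions to recover the whole group $\Lambda_B(X)$, and then to reduce the density assertion to the elementary fact that $\N^n$ is Zariski dense in affine $n$-space.

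First I would record, from the discussion preceding \cref{weight_lattice}, that $\C[X_0]^{N_0}$ is the monoid algebra on $\Lambda^+_{B_0}(X_0)$, so that $\Lambda^+_{B_0}(X_0)$ is generated as a monoid by the weights $\lambda_1,\dots,\lambda_n$ of the chosen generators $g_1,\dots,g_n$; since $\C(X_0)^{\b_{\ol 0}}$ is obtained from $\C[X_0]^{N_0}$ by inverting $g_1\cdots g_n$, these same weights generate $\Lambda_{B_0}(X_0)=\Lambda_B(X)$ as a group. Next I would note that the lift $f_i$ of $g_i$ is again a $\b$-eigenvector of weight $\lambda_i$, and that, as $f_i=g_i+(\text{nilpotent})$ with $g_i$ invertible on $U_0$, the function $f_i$ is a unit in $\C[U]$ and in particular a nonzero element of $\C[X]$. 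Hence, by \cref{weight_lattice}, whenever $r_1,\dots,r_n\geq p$ the product $f_1^{r_1}\cdots f_n^{r_n}$ is a nonzero $B$-highest weight function lying in $\C[X]$, of weight $r_1\lambda_1+\dots+r_n\lambda_n$, so that $r_1\lambda_1+\dots+r_n\lambda_n\in\Lambda^+_B(X)$ for all $r_1,\dots,r_n\geq p$.

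The group statement then follows by a subtraction: for each $i$, taking the difference of the weights obtained from $(p,\dots,p)$ and from $(p,\dots,p,p+1,p,\dots,p)$ exhibits $\lambda_i$ in the group $\langle\Lambda^+_B(X)\rangle$, and since the $\lambda_i$ generate $\Lambda_B(X)=\Lambda_{B_0}(X_0)$ as a group we get $\langle\Lambda^+_B(X)\rangle=\Lambda_B(X)$. For the density statement, this shows that the span $V$ of $\Lambda^+_B(X)$ equals the span of $\lambda_1,\dots,\lambda_n$, while the inclusion above shows $\Lambda^+_B(X)$ contains the translate $p(\lambda_1+\dots+\lambda_n)+(\N\lambda_1+\dots+\N\lambda_n)$. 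As translation is an automorphism of the affine space $V$, it suffices to see that $M:=\N\lambda_1+\dots+\N\lambda_n$ is Zariski dense in $V$; and this holds because the surjective linear map $\varphi\colon\C^n\to V$, $(a_i)\mapsto\sum_i a_i\lambda_i$, carries any polynomial vanishing on $M$ to a polynomial vanishing on all of $\N^n$, which must be identically zero, so by surjectivity of $\varphi$ the original polynomial vanishes on $V$.

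I expect the only delicate point to be the bookkeeping with the lifts $f_i$ — verifying that after choosing a $G_0$-equivariant grading via \cref{equiv_gradings} each $f_i$ is still a nonzero $\b$-eigenfunction of the expected weight $\lambda_i$ — since everything else is either a direct consequence of \cref{weight_lattice} or the standard density of $\N^n$ in $\A^n$.
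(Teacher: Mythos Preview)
Your proof is correct and follows essentially the same route as the paper: both arguments use \cref{weight_lattice} to conclude that $\Lambda^+_B(X)$ contains the set $\{\sum_i r_i\lambda_i : r_i\geq p\}$, i.e.\ the lattice points of a translated orthant, from which the group-generation and Zariski-density statements follow immediately. You have simply spelled out the elementary steps (subtraction to recover each $\lambda_i$, and the pullback along $\C^n\to V$ to reduce density to that of $\N^n$) that the paper leaves implicit; the one imprecision is calling $f_i$ itself ``a nonzero element of $\C[X]$'' (it lives in $\C[U]$), but this is harmless since what you actually use is that the products $f_1^{r_1}\cdots f_n^{r_n}$ lie in $\C[X]$ for $r_i\geq p$ and are nonzero because they are units on $U$.
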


\begin{proof}
	By \cref{weight_lattice}, $\Lambda^+_B(X)$ contains the lattice points of a translated orthant of $\R\otimes_\Z\Lambda_B(X)$, and so the results follow.
\end{proof}

Write $X//N:=\Spec \C[X]^N$.  Then by (4) of \cref{spher_affine_char}, $X//N$ is an even variety and admits a natural $T$-action such that $\C[X//N]$ is a multiplicity-free $T$-module.  In particular, $X//N$ has an open $T$-orbit, hence is essentially a toric variety but that it need not be normal or Noetherian.  Indeed, we observe it is isomorphic to the group algebra of $\Lambda^+_B(X)$, so being normal is equivalent to this monoid being saturated, and being Noetherian is equivalent to the monoid being finitely generated.  We now present examples showing how these properties can fail.

\begin{example}
	Consider the action of $G=GL(1|2)$ on $X=S^2\C^{1|2}$ as the second symmetric power of the standard representation.  This is a spherical supervariety as one can check (this was checked in \cite{sherman2020spherical}), and is spherical exactly with respect to the hyperborels $B^+$ and $B^-$ of upper and lower triangular matrices, respectively.  The coordinate ring $\C[X]$ is a supersymmetric polynomial algebra given by $S^\bullet(S^2(\C^{1|2})^*)$ as both an algebra and a $G$-module.  
	
	As a $G_0=GL(1)\times GL(2)$-representation $X_0$ is a sum of two one-dimensional representations of distinct weights.  Therefore the $B_0$-highest weight functions of $X_0$ are the monomials in two $G_0$-eigenfunctions $x,y$, where we let $x$ have weight $\lambda$ and $y$ have weight $\mu$. Let $\xi,\eta\in(S^2\C^{1|2})^*_{\ol{1}}$ be odd weight vectors of weights $\alpha,\beta$.  Then $\C[X]=\C[x,y,\xi,\eta]$.  One can show that $\xi\eta$ is a $G_0$-eigenvector of weight $\lambda+\mu$, and so one can show that for any hyperborel $B$ the rational $B$-eigenfunctions on $X$ are, up to scalar, all of the form:
	\[
	f_{ij}=x^iy^j+c_{ij}x^iy^j\frac{\xi\eta}{xy}
	\]
	where $i,j\in\Z$ and $c_{ij}\in\C$ is a coefficient in $\C$ to be determined depending on the choice of hyperborel.  For the hyperborel $B^+$, we find that $c_{ij}=i$ and for $B^-$ we find that $c_{ij}=-j$.  These values for $c_{ij}$ tell us which rational $B$-eigenfunctions are regular on all of $X$, or equivalently tell us what $\Lambda^{+}_{B^{\pm}}(X)$ are.  We draw the two monoids below to visualize the result:
	
	\begin{figure}[!ht]
		\subfloat[$\Lambda_{B^+}^+(X)$]{
			\begin{tikzpicture}
			\coordinate (Origin)   at (1,0);
			\coordinate (XAxisMin) at (1,0);
			\coordinate (XAxisMax) at (4,0);
			\coordinate (YAxisMin) at (1,0);
			\coordinate (YAxisMax) at (1,3);
			\draw [thin, gray,-latex] (XAxisMin) -- (XAxisMax);
			\draw [thin, gray,-latex] (YAxisMin) -- (YAxisMax);
			
			\foreach \x in {1,2,3}{
				\foreach \y in {0,1,2,3}{
					\node[draw,circle,inner sep=1pt,fill] at (\x+1,\y) {};
				}
			}
			\node[below=0.1cm] at (XAxisMax) {$\mu$};
			\node[left=0.1cm] at (YAxisMax) {$\lambda$};
			
			\node[draw,circle,inner sep=1pt,fill] at (1,0) {};
			\end{tikzpicture}}
		\qquad
		\qquad
		\qquad
		\subfloat[$\Lambda_{B^-}^+(X)$\label{subfig-2:dummy}]{%
			\begin{tikzpicture}
			\coordinate (Origin)   at (0,0);
			\coordinate (XAxisMin) at (0,0);
			\coordinate (XAxisMax) at (3,0);
			\coordinate (YAxisMin) at (0,0);
			\coordinate (YAxisMax) at (0,3);
			\draw [thin, gray,-latex] (XAxisMin) -- (XAxisMax);
			\draw [thin, gray,-latex] (YAxisMin) -- (YAxisMax);
			
			\foreach \x in {0,1,2,3}{
				\foreach \y in {1,2,3}{
					\node[draw,circle,inner sep=1pt,fill] at (\x,\y) {};
				}
			}
			\node[draw,circle,inner sep=1pt,fill] at (0,0) {};
			
			\node[below=0.1cm] at (XAxisMax) {$\mu$};
			\node[left=0.1cm] at (YAxisMax) {$\lambda$};
			\end{tikzpicture}
		}
	\end{figure}
	For comparison, the monoid $\Lambda^+_{B_0}(X)$ for any Borel subgroup $B_0$ of $G$ consists of all the lattice points that are a nonnegative linear combination of $\lambda$ and $\mu$.  This example demonstrates that $\Lambda_B^+(X)$ need not be finitely generated as neither of the above monoids are finitely generated. 
\end{example}

\begin{example}
	Consider the action of $G=OSP(3|4)\times OSP(3|4)$ on $X=OSP(3|4)$ by left and right multiplication.  The notion of Borel and hyperborel coincide for both $OSP(3|4)$ and $OSP(3|4)\times OSP(3|4)$.  If $B$ is a Borel of $OSP(3|4)$, then $X$ is $B\times B^{-}$-spherical where $B^{-}$ is the opposite Borel of $B$.  As we will see in the examples section, $\Lambda^+_{B\times B^{-}}(X)$ will be exactly the $B$-dominant weights of $OSP(3|4)$.  Now if we choose the Borel determined by the simple roots $\delta_1-\delta_2,\delta_2-\epsilon_1,\epsilon_1$ as described in section 1.3.3 of \cite{cheng2012dualities}, then by theorem 2.11 of \cite{cheng2012dualities} the weight $\lambda=\epsilon_1+\epsilon_2+\delta_1+\delta_2$ is not dominant while $k\lambda$ is dominant for $k\geq 2$.  Thus $\Lambda^+(X)$ is will not be saturated in this case.
\end{example}

\subsection{Spherical quasiprojective supervarieties}
Let $G$ be quasireductive, $X$ a quasiprojective $G$-supervariety, and $B$ a hyperborel subgroup.  
\begin{thm}\label{qproj_spher_char}
	Let $X$ be a quasiprojective $G$-supervariety, $B$ a hyperborel subgroup of $G$ with unipotent radical $N$ and maximal torus $T$.  Assume that there exists a very ample $B$-equivariant line bundle on $X$.    Then the following are equivalent:
	\begin{enumerate}
		\item $X$ is spherical for $B$.
		\item $X_{0}$ is spherical for $B_{0}$, and for every $B$-equivariant line bundle $\LL$, the non-zero elements of $\Gamma(X,\LL)^{N}$ are non-vanishing at some point.
		\item For every $B$-equivariant line bundle $\LL$, the non-zero elements of $\Gamma(X,\LL)^{N}$ are non-vanishing at some point, and $\dim\Gamma(X,\LL)^{N}_{\lambda}\leq 1$ for all weights $\lambda$ of $T$.
	\end{enumerate}
\end{thm}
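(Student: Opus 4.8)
The plan is to mimic the structure of the affine case (\cref{spher_affine_char}), replacing $\C[X]$ everywhere by the graded ring $R(X,\LL)=\bigoplus_{n\geq 0}\Gamma(X,\LL^{\otimes n})$ for a fixed very ample $B$-equivariant line bundle $\LL$, and using \cref{homog_line_bundle_char} and \cref{submersion_injec} in place of \cref{affine_stable_ideals}. The equivalences $(2)\iff(3)$ should be essentially formal: if $X_0$ is $B_0$-spherical then by the classical theory $\dim\Gamma(X_0,\LL_0^{\otimes n})^{N_0}_\lambda\leq 1$ for every $n$ and $\lambda$, where $\LL_0=i_X^*\LL$; and the hypothesis that nonzero elements of $\Gamma(X,\LL)^N$ are non-vanishing somewhere forces the restriction map $\Gamma(X,\LL)^{(\b)}\to\Gamma(X_0,\LL_0)^{(\b_{\ol 0})}$ to be injective (a nonzero $B$-eigensection lying in the kernel would have to vanish everywhere on $X_0$, hence be a nilpotent section, hence vanish on the reduced scheme underlying each open set where it is supposedly non-vanishing — contradiction), so multiplicity-freeness upstairs follows from multiplicity-freeness downstairs. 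One has to take a little care that $\LL$ ranges over all equivariant line bundles, but since the tensor powers $\LL^{\otimes n}$ are again equivariant and very ample this is not an obstruction; the statements quantify over all $\LL$ precisely so that passing to powers stays inside the family.

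For $(1)\implies(2)$: if $a_x\colon B\to X$ is a submersion, then by \cref{submersion_injec}(4) the pullback $a_x^*\colon \LL\to (a_x)_*a_x^*\LL$ is injective for every line bundle $\LL$, and in particular injective on global sections. Now $a_x^*\LL$ is a $B$-equivariant line bundle on $B$, hence trivial as a line bundle with its $B$-linearization twisted by a character, so $\Gamma(B,a_x^*\LL)\cong\C[B]$ as a $B$-module up to a character twist. A nonzero $B$-eigensection $s\in\Gamma(X,\LL)^N$ of weight $\lambda$ maps to a nonzero $B$-eigenfunction in $\C[B]$, which is non-vanishing everywhere on $B$ (the $N$-invariant $T$-eigenfunctions on $B=N\rtimes T$ pulled from $T$ are units); pushing this back, $s$ cannot vanish identically on the image of $a_x$, so $s$ is non-vanishing at some point of $X$. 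Sphericity of $X_0$ for $B_0$ is immediate since the open $B$-orbit in $X$ restricts to an open $B_0$-orbit in $X_0$.

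The main work is $(2)\implies(1)$, and here is where \cref{homog_line_bundle_char} enters. From $(2)$ (equivalently $(3)$), $R(X,\LL)^N=\bigoplus_n\Gamma(X,\LL^{\otimes n})^N$ is a multiplicity-free $T$-module with no nonzero nilpotent (more precisely, no nowhere-non-vanishing) elements; let $S\subseteq\Z_{\geq0}\times\Lambda$ be the monoid of pairs $(n,\lambda)$ occurring. As in $(4)\implies(1)$ of \cref{spher_affine_char}, the group generated by $S$ is finitely generated, so one can pick finitely many $B$-eigensections $s_1,\dots,s_m$ (of various powers $\LL^{\otimes n_i}$) whose weights, together with enough negatives, generate this group; let $U$ be the common non-vanishing locus of the $s_i$, an open $B$-stable subscheme, nonempty by $(2)$. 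On $U$ every $B$-eigensection of every $\LL^{\otimes n}$ is either a unit or becomes expressible as a ratio making the restriction $\Gamma(X,\LL^{\otimes n})^N\to\Gamma(U,\LL^{\otimes n})^N$ have image consisting of units. Pick $x\in U(\C)$; I claim $a_x$ is a submersion. If not, then by \cref{submersion_injec} the kernel $\KK$ of $\OO_X\to(a_x)_*\OO_X$ is nonzero, hence (twisting by $\LL^{\otimes n}$ and invoking \cref{serre_thm}) $\Gamma(X,\KK\otimes\LL^{\otimes n})\neq 0$ for some $n$, and this is a nonzero $B$-submodule of $\Gamma(X,\LL^{\otimes n})$, so it contains a $B$-eigensection $s$. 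But $s$ restricted to $U$ pulls back to $0$ under $a_x^*$ while also being a unit on $U$ — impossible. Hence $a_x$ is a submersion and $X$ is $B$-spherical. The one delicate point I expect to fight with is bookkeeping across the graded pieces: ensuring that "$B$-eigensection in the kernel of $a_x^*$" really does contradict "$B$-eigensections of $\LL^{\otimes n}$ are units on $U$", which requires knowing that $a_x^*$ kills a section of $\KK\otimes\LL^{\otimes n}$ only if it kills the corresponding section of $\LL^{\otimes n}$ after the canonical identification — this follows from flatness of $\LL$ and the fact that the sequence $0\to\KK\otimes\LL^{\otimes n}\to\LL^{\otimes n}\to(a_x)_*\OO_X\otimes\LL^{\otimes n}$ is exact, exactly as in the proof of \cref{homog_line_bundle_char}.
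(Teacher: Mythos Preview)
Your approach is correct and takes a genuinely different route from the paper for the hard implication back to (1). The paper proves $(3)\Rightarrow(1)$ by appealing to \cref{spher_char}: assuming $X$ is not $B$-spherical, it takes a bad $f\in\C(X)^{(\b)}$ (nilpotent, or one of two eigenvectors of the same weight), writes it as a ratio $s_1/s_2$ of $B$-eigensections of some $\LL^{\otimes n}$ via the very-ample bundle, and contradicts (3). You instead rerun the converse half of \cref{homog_line_bundle_char} with $B$ in place of $G$: pick $x$ in a carefully chosen open $U$, twist the kernel sheaf $\KK$ of $a_x^*$ by \cref{serre_thm} to produce a $B$-eigensection in $\ker a_x^*$, and contradict non-vanishing at $x$. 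The paper's route is shorter on the page because \cref{spher_char} (and behind it \cref{ratl_funct_open_orbit}, Sumihiro, etc.) absorbs the work; yours is more self-contained and stays with the sheaf machinery already set up for \cref{homog_line_bundle_char}.

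One point to tighten: the claim that every $B$-eigensection of every $\LL^{\otimes n}$ is non-vanishing on your $U$ does not follow from ``no nonzero section vanishes everywhere'' alone --- you need $R(X,\LL)^N$ to be a graded \emph{domain}, i.e.\ products of nonzero eigensections are nonzero. This does follow from (3): if $s\neq 0$ is an eigensection of $\LL^{\otimes a}$, it is non-vanishing on some open $V$ and hence trivializes $\LL^{\otimes a}|_V$; then $st=0$ forces $t|_V=0$, hence $t=0$ by the injectivity-of-restriction axiom for supervarieties. Once the domain property is in hand your monoid argument goes through verbatim (and in particular $U\neq\emptyset$ since $s_1\cdots s_m$ is a nonzero eigensection, hence non-vanishing somewhere by (3) --- no need to invoke $X_0$-sphericity here). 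This also shows that your ``main work'' is really $(3)\Rightarrow(1)$ rather than $(2)\Rightarrow(1)$: you never actually use that $X_0$ is $B_0$-spherical, so the cycle $(1)\Rightarrow(2)\Rightarrow(3)\Rightarrow(1)$ closes without a separate argument for $(3)\Rightarrow(2)$.
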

\begin{proof}
	(1)$\implies$(2) Let $\LL$ be a $B$-equivariant line bundle on $X$.  Write $U$ for the open $B$-orbit on $X$.  Then if $\sigma\in\Gamma(X,\LL)$ is a $B$-eigenvector, restricting to $U$ it spans a nonzero $B$-submodule of $\Gamma(U,\LL)$.  Since $U$ is a homogeneous $B$-supervariety, by \cref{homog_line_bundle_char} $\sigma$ must generate $\LL|_{U}$, and thus must be non-vanishing on $U$.
	
	(2)$\implies$(3) It suffices to check that for a $B$-equivariant line bundle $\LL$, $\dim\Gamma(X,\LL)^{N}_{\lambda}\leq 1$ for all weights $\lambda$ of $T$. Since $X_0$ is spherical for $B_0$ and $i:X_0\to X$ is $G_0$-equivariant, $i^*\LL$ is a $B_0$-equivariant line bundle, and the pullback morphism $\Gamma(X,\LL)\to\Gamma(X_0,i^*\LL)$ is $B_0$-equivariant.  The map $\Gamma(X,\LL)^N$ $\to\Gamma(X_0,i^*\LL)^{N_0}$ is injective by assumption, and since $X_0$ is spherical we have $\dim\Gamma(X_0,i^*\LL)^{N_0}_{\lambda}\leq 1$ for all weights $\lambda$. 
	
	(3)$\implies$(1) Let $\LL$ be a $B$-equivariant very ample line bundle on $X$.  Then if $X$ does not have an open $B$-orbit, it must follow that $\C(X)^{(\b)}$ either has a nilpotent function or is not multiplicity-free.  If $f\in \C(X)^{(\b)}$ is homogeneous, there exists $n>0$ and a homogeneous global section $s\in\Gamma(X,\LL^{\otimes n})$ such that $fs\in\Gamma(X,\LL^{\otimes n})$ is also a global section.  Let $V\sub\Gamma(X,\LL^{\otimes n})$ be the subspace of sections $s$ such that $fs\in\Gamma(X,\LL^{\otimes n})$.  Then $V$ is a $B$-submodule of $\Gamma(X,\LL^{\otimes n})$ and thus admits a non-zero $B$-eigenvector $s_2$.  Let $s_1:=fs_2$.  Then by construction $s_1$ is also a $B$-eigenvector.  In particular, $s_1$ and $s_2$ both are non-vanishing at some point by assumption, and thus $f=s_1/s_2$ is also non-vanishing at some point and therefore cannot be nilpotent.  
	
	If $f,g\in\C(X)^{(\b)}$ are $\b$-eigenvectors with the same weight for the action of $\b$, then $f/g$ will be $\b$-invariant.  Thus by our construction there exists an $n>0$ and $s_1,s_2\in \Gamma(X,\LL^{\otimes n})^{(\b)}$ such that $f=s_1/s_2$, and thus $s_1$ and $s_2$ have the same weight for $\b$.  But by assumption $\Gamma(X,\LL^{\otimes n})^N$ is a multiplicity-free $T$-module, so we obtain a contradiction.  This completes the proof.
\end{proof}

\section{Examples}

We present some examples of spherical supervarieties.

\subsection{Spherical Representations}  Irreducible spherical representations of reductive algebraic groups were originally classified by Kac in \cite{kac1980some}.  In \cite{sherman2020spherical}, the author classified all indecomposable spherical representations of the groups $GL(m|n)$, $OSP(m|2n)$, $P_{n|n}$, and the basic exceptional simple groups.  The case of $Q(n)$ is also looked at, however a different notion of spherical was used for this supergroup there.  

We found there are a few infinite families of irreducible representations, along with certain small exceptional cases.  Below is a table of the infinite families; for the rest, we refer the reader to the paper.  We write $GL_{m|n}$, $OSP_{m|2n}$, and $P_{n|n}$ respectively for the standard representations of $GL(m|n)$, $OSP(m|2n)$, and $P(n)$ respectively.  We also state the dimension of the representation and whether the algebra of functions on it is completely reducible.

\renewcommand{\arraystretch}{1.25}

\[	\begin{tabular}{ |c|c|c| } 
\hline 
$V$ & $\text{dim}^sV$ & Completely red.? \\
\hline
$GL_{m|n}$ & $(m|n)$ & Yes \\
\hline 
$S^2GL_{m|n}$ & $(\frac{n(n-1)}{2}+\frac{m(m+1)}{2}|mn)$& Yes\\ 
\hline
$\Pi S^2GL_{n|n}$ &  $(n^2|n^2)$& Yes\\
\hline
$\Pi S^2GL_{n|n+1}$& $(n(n+1)|n(n+1))$& Yes \\
\hline
$OSP_{m|2n}$, $m\geq 2$ & $(m|2n)$ &\makecell{Iff $m$ is odd \\ or $m>2n$} \\
\hline
$\Pi OSP_{m|2n}$ & $(2n|m)$ & Yes \\
\hline 
$\Pi P_{n|n}$ & $(n|n)$ & No  \\
\hline
\end{tabular}
\]
\renewcommand{\arraystretch}{1}

\subsection{Symmetric Supervarieties}  Let $\g$ be quasireductive.  Given an involution $\theta$ of $\g$, we write $\k=\g^\theta$ for the fixed points of $\theta$, and call the pair $(\g,\k)$ a supersymmetric pair.  If $G$ is a Lie supergroup and $K$ a subgroup with $\operatorname{Lie}(G)=\g$ and $\operatorname{Lie}(K)=\k$, we call the coset space $G/K$ a symmetric supervariety.  

In the classical world, symmetric varieties for reductive groups are always spherical by the Iwasawa decomposition.  We recall how this decomposition works now, generalizing it to the super case.  We keep the same notation, letting $\g$ be quasireductive, $\theta$ an involution of $\g$ with fixed points $\k$ and $(-1)$-eigenspace $\p$.  Then let $\a\sub\p$ be a maximal toral subalgebra of $\p$, i.e.\ a maximal abelian subspace of $\p_{\ol{0}}$ with the property that the elements of $\a$ are semisimple in $\g_{\ol{0}}$.  Then we may decompose $\g$ into weight spaces under the adjoint action of $\a$.  Write $\Sigma\sub\a^*$ for the set of non-zero weights under this action.  Choosing a generic hyperplane we obtain a subset $\Sigma^+\sub\Sigma$ of positive weights, and we define
\[	
\n=\bigoplus\limits_{\alpha\in\Sigma^+}\g_{\alpha}.
\]
Write $C(\a)$ for the centralizer of $\a$ in $\g$.  Then we have $C(\a)=C(\a)\cap\k\oplus C(\a)\cap\p$.  
\begin{prop}
	The condition $C(\a)\cap\p=\a$ is equivalent to the following decomposition of $\g$:
	\[
	\g=\k\oplus\a\oplus\n.
	\]
	We call such a decomposition an Iwasawa decomposition of the symmetric pair $(\g,\k)$ (or of the involution $\theta$).
\end{prop}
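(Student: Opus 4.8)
The plan is to prove both implications directly from the weight-space decomposition $\g=C(\a)\oplus\n\oplus\n^-$, where $\n^-=\bigoplus_{\alpha\in\Sigma^+}\g_{-\alpha}$ is the sum of the negative weight spaces. Since $\theta$ acts as $+1$ on $\k$ and $-1$ on $\p$, and since $\theta$ carries $\g_\alpha$ to $\g_{-\alpha}$ (because $\a\subseteq\p$, so $\theta$ negates the weights), the map $x\mapsto x+\theta(x)$ sends $\n^-$ into $\k$; I would first record that this map $\n^-\to\k$ is injective, because its kernel consists of $\theta$-fixed vectors in $\n^-$, and $\k\cap\n^-=0$ as $\k\cap(\n\oplus\n^-)=0$ (the latter following from the fact that $\k$ is $\a$-stable and $\theta$ swaps $\n$ with $\n^-$, forcing any $\k$-element in $\n\oplus\n^-$ to be symmetric under the swap, hence to live in... — actually cleaner: if $y\in\k\cap(\n\oplus\n^-)$ then $\theta(y)=y$, but $\theta$ interchanges the $\n$ and $\n^-$ components, so $y$ has equal components, yet $y\in\n\oplus\n^-$ has zero $C(\a)$-component while $\theta(y)=y$ also negates weights — comparing weight components gives $y=0$).

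For the forward direction, assume $C(\a)\cap\p=\a$. Then $C(\a)=(C(\a)\cap\k)\oplus\a$, so it suffices to show $\g=(C(\a)\cap\k)\oplus\a\oplus\n$ and that this sum, after replacing $C(\a)\cap\k$ and $\a$ by $\k$, still exhausts $\g$ and is direct. Concretely I would argue: given $x\in\g$, decompose $x=c+n_-+n_+$ with $c\in C(\a)$, $n_\pm\in\n^{\pm}$ (using $\n^+=\n$). Write $c=k_0+a$ with $k_0\in C(\a)\cap\k$, $a\in\a$. Then $x=(k_0+n_-+\theta(n_-))+a+(n_+-\theta(n_-))$, and $k_0+n_-+\theta(n_-)\in\k$ since $k_0\in\k$ and $n_-+\theta(n_-)\in\k$, while $n_+-\theta(n_-)\in\n$ because $\theta(n_-)\in\n$. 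This gives $\g=\k+\a+\n$. For directness, a dimension count suffices: $\dim\k=\dim(C(\a)\cap\k)+\dim\n^-$ via the isomorphism above (here one needs that $\theta(\n^-)=\n$ and that adding $\n^-+\theta(\n^-)$ into $\k$ captures all of $\k$ outside $C(\a)\cap\k$ — this is where I must be careful and verify $\k=(C(\a)\cap\k)\oplus\{n+\theta(n):n\in\n^-\}$ as super vector spaces), so $\dim\k+\dim\a+\dim\n=\dim(C(\a)\cap\k)+\dim\n^-+\dim\a+\dim\n=\dim C(\a)+\dim\n^-+\dim\n=\dim\g$, with all equalities of super-dimensions. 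Combined with $\g=\k+\a+\n$ this forces the sum to be direct.

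For the converse, assume $\g=\k\oplus\a\oplus\n$ (direct sum). I want $C(\a)\cap\p\subseteq\a$ (the reverse inclusion is automatic since $\a\subseteq\p$ and $\a$ centralizes itself). Take $p\in C(\a)\cap\p$. Since $p\in C(\a)$, in the weight decomposition $p$ lies in $\g_0=C(\a)$, so its $\n$-component and $\n^-$-component vanish. Writing $p=k+a+n$ per the Iwasawa decomposition forces $n=0$ (weight reasons: $p$ has zero positive-weight part) and $k\in C(\a)$ (comparing $\a$-weight-zero parts), so $k\in C(\a)\cap\k$. Then $p-a=k\in\p\cap\k=0$, giving $p=a\in\a$. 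The main obstacle I anticipate is bookkeeping the super-dimension equalities and the $\theta$-equivariance of the weight-space decomposition carefully — in particular verifying $\k=(C(\a)\cap\k)\oplus(1+\theta)(\n^-)$ rather than just an inclusion — since everything else is a formal consequence of $\theta$ swapping $\n$ and $\n^-$; the odd part requires no extra argument beyond noting all the maps involved are parity-preserving.
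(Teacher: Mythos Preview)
The paper states this proposition without proof, treating the equivalence as routine once the weight-space setup is in place, so there is no argument in the paper to compare against. Your approach is the standard one and is essentially correct; two small points deserve tightening.

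First, the kernel of $x\mapsto x+\theta(x)$ on $\n^-$ consists of $\theta$-\emph{anti}fixed vectors (elements of $\p$), not $\theta$-fixed ones. The clean reason the map is injective is the one you eventually reach: if $x\in\n^-$ and $x+\theta(x)=0$ then $x=-\theta(x)\in\n^-\cap\theta(\n^-)=\n^-\cap\n=0$.

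Second, in the converse your step ``$p$ has zero positive-weight part, hence $n=0$'' is too fast, since $k$ may itself carry positive-weight components. The correct order is: from $p=k+a+n$ and the fact that $p,a,n$ all have zero \emph{negative}-weight part, conclude that $k$ does too; then $\theta(k)=k$ together with $\theta(\n)=\n^-$ forces the positive-weight part of $k$ to vanish as well, so $k\in C(\a)\cap\k$; now $n=p-a-k\in\n\cap C(\a)=0$, and finally $p-a=k\in\p\cap\k=0$. With these fixes your forward direction (spanning plus the dimension count via $\k=(C(\a)\cap\k)\oplus(1+\theta)(\n^-)$, which does hold: weight-decompose an arbitrary $k\in\k$ and use $\theta(k)=k$ to see $k=k_0+(k_-+\theta(k_-))$) goes through as written.
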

It is a well-known theorem that if $\g=\g_{\ol{0}}$ is reductive then every symmetric pair has an Iwasawa decomposition (see for instance section 26.4 of \cite{timashev2011homogeneous}).  However in the super world this no longer remains true.  In particular, it is possible for $C(\a)\cap\p_{\ol{1}}\neq0$.  However, we do have the following:
\begin{thm}
	If $\g$ is a basic classical simple Lie superalgebra and $\theta$ is an involution that preserves the invariant bilinear form on $\g$, then either $\theta$ or $\delta\circ\theta$ has an Iwasawa decomposition, where $\delta\in\Aut(\g)$ is the grading automorphism $\delta(x)=(-1)^{\ol{x}}x$.  
\end{thm}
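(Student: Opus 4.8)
\emph{The plan.} Only the odd part of the Iwasawa decomposition can fail here, and the role of $\delta$ is exactly to swap the two candidate odd pieces; so I would first cut the problem down to a sign question about an auxiliary module and then settle that. Since $\delta$ is central in $\Aut(\g)$, acts as $+1$ on $\g_{\ol0}$ and as $-1$ on $\g_{\ol1}$, the involution $\delta\circ\theta$ has the same even fixed subalgebra $\k\cap\g_{\ol0}$ and the same even $(-1)$-eigenspace $\p_{\ol0}$ as $\theta$, while on $\g_{\ol1}$ it interchanges $\k_{\ol1}:=\k\cap\g_{\ol1}$ with $\p_{\ol1}:=\p\cap\g_{\ol1}$. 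Thus the maximal toral subalgebra $\a\sub\p_{\ol0}$, its centralizer $C(\a)$, and the classical identity $C(\a)\cap\p_{\ol0}=\a$ (which holds because $\g_{\ol0}$ is reductive) are common to $\theta$ and $\delta\theta$. Setting $\m^{+}:=C(\a)\cap\k_{\ol1}$ and $\m^{-}:=C(\a)\cap\p_{\ol1}$, the preceding proposition shows that $\theta$ admits an Iwasawa decomposition iff $\m^{-}=0$, and $\delta\theta$ does iff $\m^{+}=0$. So it suffices to prove that $\m^{+}$ and $\m^{-}$ are never both nonzero.

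\emph{Reduction to a sign on irreducible summands.} Choose a $\theta$-stable maximal torus $\h$ of $\g_{\ol0}$ with $\h\cap\p_{\ol0}=\a$; since $\g$ is basic classical this is a Cartan subalgebra of $\g$, all root spaces are lines, and $\g_{\ol1}$ is weight-multiplicity-free over $\h$ (its weights are the odd roots). As $\theta$ is $-1$ on $\a$ and $+1$ on $\h\cap\k_{\ol0}$, a root $\alpha$ has $\alpha|_{\a}=0$ exactly when $\theta$ preserves $\g_{\alpha}$, and then $\theta|_{\g_{\alpha}}=\epsilon_{\alpha}\in\{\pm1\}$. One then checks: (i) if $\alpha$ is even with $\alpha|_{\a}=0$ then $\epsilon_{\alpha}=+1$, because the $(-1)$-eigenspace of $\theta$ on $\l:=C(\a)\cap\g_{\ol0}$ is $\a\sub\h$, which meets no root space; (ii) $\epsilon_{-\alpha}=\epsilon_{\alpha}$, since $[\g_{\alpha},\g_{-\alpha}]$ is a nonzero line in $\h\cap\k_{\ol0}$; (iii) $\epsilon_{\alpha+\beta}=\epsilon_{\alpha}\epsilon_{\beta}$ when $\alpha+\beta$ is a root, so by (i) the value of $\epsilon$ is constant along each $\l$-root string of odd roots vanishing on $\a$. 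Now $\m:=C(\a)\cap\g_{\ol1}=\bigoplus_{\alpha\ \text{odd},\ \alpha|_{\a}=0}\g_{\alpha}$ is a weight-multiplicity-free $\l$-module on which $\a$ acts trivially; since $\theta$ fixes $\k$ pointwise and $\l=\a\oplus(\l\cap\k)$, the map $\theta|_{\m}$ is $\l$-linear, hence by Schur acts by a scalar $\pm1$ on each irreducible $\l$-summand of $\m$, that scalar being the common value of $\epsilon$ on the summand's weights, equal for a summand and its dual by (ii). Therefore $\m^{+}$ and $\m^{-}$ are both nonzero iff $\m$ has two irreducible $\l$-summands of opposite sign.

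\emph{A commuting relation.} For $x\in\m^{+}$, $y\in\m^{-}$ one has $[x,y]\in[\k_{\ol1},\p_{\ol1}]\cap C(\a)\sub\p_{\ol0}\cap C(\a)=\a$; on the other hand $([x,y],h)=(x,[y,h])=0$ for all $h\in\a$ because $h$ centralizes $y$, so $[x,y]\perp\a$. The invariant form is nondegenerate on $\a$ (it is nondegenerate on $\h$, and the $\theta$-eigenspaces $\a$ and $\h\cap\k_{\ol0}$ are mutually orthogonal), hence $[\m^{+},\m^{-}]=0$; via the Jacobi identity the subalgebras $(\l\cap\k)\oplus\m^{+}$ and $(\l\cap\k)\oplus\m^{-}$ of $C(\a)$ then have commuting odd parts. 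This is the extra constraint the hypothetical opposite-sign summands must obey.

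\emph{Finishing, and the main obstacle.} What remains is to rule out that the $\l$-module $\m=\g_{\ol1}\cap C(\a)$ has two irreducible summands of opposite sign with vanishing mutual bracket. Here I would invoke the classification of form-preserving involutions of the basic classical simple Lie superalgebras (equivalently, of supersymmetric pairs): it is a short finite list, and for each entry $C(\a)$ can be written down explicitly, whereupon $\m$ turns out to be $0$, an irreducible $\l$-module, or a dual pair $V\oplus V^{*}$, so the signs agree and one of $\m^{\pm}$ vanishes. This last verification is where the real work lies and is the main obstacle; a cleaner but more delicate alternative would be to prove uniformly, from the fact that $\g_{\ol1}$ is a weight-multiplicity-free $\g_{\ol0}$-module that is irreducible or a single dual pair, that its zero-$\a$-weight space $\m$ is automatically $0$, $\l$-irreducible, or a Lagrangian dual pair, which would make the $\delta$-twist argument entirely self-contained.
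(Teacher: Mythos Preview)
The paper does not supply its own proof of this theorem; it cites \cite{alex2020iwasawa}, where the result is obtained through Serganova's framework of generalized root systems. So there is no in-paper argument to line up against yours.

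Your approach is genuinely different and the reduction is sound. Replacing $\theta$ by $\delta\theta$ leaves $\p_{\ol0}$, hence $\a$ and $C(\a)$, unchanged while swapping $\k_{\ol1}$ with $\p_{\ol1}$; so the two Iwasawa conditions really are $\m^{-}=0$ and $\m^{+}=0$ respectively, and the theorem is equivalent to showing these cannot both fail. Your structural observations are correct: $\m=C(\a)\cap\g_{\ol1}$ is weight-multiplicity-free over $\h$, the map $\theta|_{\m}$ is $\l$-linear (because $(\theta-1)\l\subset\a$ and $\a$ kills $\m$), hence $\theta$ is $\pm1$ on each irreducible $\l$-summand, and the sign is preserved under passage to the dual summand by your point (ii). The computation $[\m^{+},\m^{-}]=0$ is also right.

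The gap is exactly where you locate it. Nothing proved so far forces $\m$ to have at most one irreducible $\l$-constituent up to duality; one must still exclude the possibility that $\m$ contains two self-dual-or-dual-pair pieces carrying opposite signs and bracketing to zero with one another. You propose to close this either by running through the finite list of form-preserving involutions or by a uniform structural argument about zero-$\a$-weight spaces in weight-multiplicity-free $\g_{\ol0}$-modules, but you carry out neither. That verification is the substance of the theorem, and the cited reference handles it via generalized root systems and Satake-diagram combinatorics rather than the raw case check you outline. Until one of your two proposed routes is actually executed, the argument is a correct and useful reduction, not a proof.
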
  
This result is proven in \cite{alex2020iwasawa} using the framework of generalized root systems as developed by Serganova in \cite{serganova1996generalizations}.  There the restricted root systems of symmetric pairs is also studied as well as the notion of Satake diagrams.  The significance of an Iwasawa decomposition for our purposes is that
\begin{thm}
	If a symmetric pair $(\g,\k)$ admits an Iwasawa decomposition, then there exists a hyperborel $\b$ of $\g$ such that $\b+\k=\g$.  In particular, a symmetric supervariety $G/K$ constructed from this symmetric pair is spherical.
\end{thm}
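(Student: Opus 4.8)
The plan is, working with the given Iwasawa decomposition $\g=\k\oplus\a\oplus\n$, to construct a hyperborel $\b'$ of $\g$ with $\a\oplus\n\sub\b'$; then $\b'+\k\supseteq(\a\oplus\n)+\k=\g$ yields the first assertion for free, and the second follows from a short tangent-space computation. First I would record the structural input: $C(\a)$ is the zero weight space of the adjoint $\a$-action, so $[C(\a),\g_\alpha]\sub\g_\alpha$ for every restricted weight $\alpha$ and $[\g_\alpha,\g_\beta]\sub\g_{\alpha+\beta}$ for $\alpha,\beta\in\Sigma^+$; the hypothesis that an Iwasawa decomposition exists is exactly $C(\a)\cap\p=\a$, which forces $C(\a)=\m\oplus\a$ where $\m:=C(\a)\cap\k$ is a subalgebra centralizing $\a$; and $\m_{\ol{0}}=C(\a)_{\ol{0}}^{\theta}$ is reductive, being the fixed points of the involution $\theta$ on the reductive subalgebra $C(\a)_{\ol{0}}$, so $\m$ is quasireductive.

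Next I would fix the even data: since $C(\a)_{\ol{0}}\oplus\n_{\ol{0}}$ is a parabolic subalgebra of $\g_{\ol{0}}$ with Levi $C(\a)_{\ol{0}}=\m_{\ol{0}}\oplus\a$, for any Borel $\b_{\m_{\ol{0}}}$ of the reductive algebra $\m_{\ol{0}}$ the subspace $\b_{\m_{\ol{0}}}\oplus\a\oplus\n_{\ol{0}}$ is a Borel of $\g_{\ol{0}}$ ($\a$, central in the Levi, automatically lies in it). Pick such a $\b_{\m_{\ol{0}}}$ and, using \cref{borel_rmk}, a hyperborel $\b_\m$ of $\m$ with $(\b_\m)_{\ol{0}}=\b_{\m_{\ol{0}}}$; then set $\b:=\b_\m\oplus\a\oplus\n$. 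The bracket relations recorded above show $\b$ is a subalgebra of $\g$ whose even part $\b_{\ol{0}}=\b_{\m_{\ol{0}}}\oplus\a\oplus\n_{\ol{0}}$ is a Borel of $\g_{\ol{0}}$; note also $\n_{\ol{0}}=[\a,\n_{\ol{0}}]\sub[\b_{\ol{0}},\b_{\ol{0}}]$. To verify the second condition of \cref{defn_borel}, decompose $\b_{\ol{1}}=(\b_\m)_{\ol{1}}\oplus\n_{\ol{1}}$: the self-bracket of the first summand lies in $[(\b_\m)_{\ol{0}},(\b_\m)_{\ol{0}}]\sub[\b_{\ol{0}},\b_{\ol{0}}]$ since $\b_\m$ is a hyperborel of $\m$, whereas $[(\b_\m)_{\ol{1}},\n_{\ol{1}}]$ and $[\n_{\ol{1}},\n_{\ol{1}}]$ both lie in $\n\cap\g_{\ol{0}}=\n_{\ol{0}}\sub[\b_{\ol{0}},\b_{\ol{0}}]$. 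Hence $[\b_{\ol{1}},\b_{\ol{1}}]\sub[\b_{\ol{0}},\b_{\ol{0}}]$, so $\b$ satisfies the two defining conditions of \cref{defn_borel}.

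It then remains to upgrade $\b$ to a hyperborel: by finite-dimensionality $\b$ lies in a maximal subalgebra satisfying those two conditions, i.e.\ in a hyperborel $\b'$; and $\b'\supseteq\b\supseteq\a\oplus\n$ gives $\b'+\k=\g$, which is the first statement. For the second, let $B\sub G$ be the hyperborel subgroup integrating $\b'$ (\cref{defn_borel_group}), acting on $G/K$ by left translation. By \cref{orbit_diff} the differential at the identity of the orbit map $a_{eK}:B\to G/K$ is the evaluation $\b'=\operatorname{Lie}(B)\to T_{eK}(G/K)\cong\g/\k$, which is just the restriction to $\b'$ of the projection $\g\to\g/\k$; since $\b'+\k=\g$ this is surjective, so $a_{eK}$ is a submersion and $B$ has an open orbit through $eK$. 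Thus $G/K$ is $B$-spherical, hence spherical.

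The one step that needs care is the inclusion $[\b_{\ol{1}},\b_{\ol{1}}]\sub[\b_{\ol{0}},\b_{\ol{0}}]$: this is precisely where the notion of hyperborel rather than Borel is essential, and it reduces, via the $\a$-weight grading, to the two facts that the even Iwasawa nilradical $\n_{\ol{0}}$ lies in the nilradical $[\b_{\ol{0}},\b_{\ol{0}}]$ of the even Borel and that $\b_\m$ satisfies the analogous condition inside $\m$. Everything else is either the classical reductive Iwasawa/minimal-parabolic picture or a direct consequence of results already established above.
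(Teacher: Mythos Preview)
Your proof is correct and follows the same strategy as the paper: build a subalgebra containing $\a\oplus\n$ that satisfies the first two conditions of \cref{defn_borel}, then enlarge it to a hyperborel by maximality, so that $\b+\k\supseteq(\a\oplus\n)+\k=\g$. The only difference is in how the ``Levi part'' of this intermediate subalgebra is produced: the paper picks a Cartan $\h_{\ol{0}}\supseteq\a$ of $\g_{\ol{0}}$ and a positive system $\Delta^+$ compatible with $\Sigma^+$, setting $\b'=\h\oplus\bigoplus_{\alpha\in\Delta^+}\g_\alpha$, whereas you work through the minimal parabolic, choosing a hyperborel $\b_\m$ of the quasireductive centralizer $\m=C(\a)\cap\k$ and setting $\b=\b_\m\oplus\a\oplus\n$. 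These are two packagings of the same classical picture; your version has the virtue of making the verification of $[\b_{\ol{1}},\b_{\ol{1}}]\sub[\b_{\ol{0}},\b_{\ol{0}}]$ explicit (reducing it to the hyperborel condition on $\b_\m$ and the inclusion $\n_{\ol{0}}=[\a,\n_{\ol{0}}]\sub[\b_{\ol{0}},\b_{\ol{0}}]$), while the paper simply asserts this.
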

\begin{proof}
	Write $\g=\k\oplus\a\oplus\n$ for the Iwasawa decomposition.  Write $\Sigma^+\sub\a^*$ for the positive weights defining $\n$.  Let $\h_{\ol{0}}\sub\g_{\ol{0}}$ be a Cartan subalgebra containing $\a$.  Write $\Delta\sub\h^*$ for the roots of $\g$ with respect to $\h$.  Then we have a natural projection map $\h^*\to\a^*$ inducing a map $\Delta\to\Sigma\cup\{0\}$.  Choose a generic hyperplane in $\Delta$ so that the image of $\Delta^+$ under this projection lands in $\Sigma^+\cup\{0\}$.  Then consider
	\[
	\b'=\h\oplus\bigoplus\limits_{\alpha\in\Delta^+}\g_{\alpha}
	\]
	Then $\b'$ satisfies all the properties of a hyperborel apart possibly from maximality, and thus is contained in a hyperborel subalgebra $\b$ of $\g$.  Further, $\a\oplus\n\sub\b'\sub\b$, and therefore $\k+\b=\g$ completing the proof.
\end{proof}
Below we list all supersymmetric pairs (up to conjugacy) for the algebras $\g\l(m|n)$, $\o\s\p(m|2n)$, $\p(n)$, and the simple basic exceptional algebras.  For all cases but $\a\b(1|3)$, $\g(1|2)$, and $\mathfrak{d}(2|1;\alpha)$ we refer to the classification in \cite{serganova1983classification}; these remaining the cases were communicated to the author by Serganova.  

For each we state whether or not the pair is spherical as well as whether it admits an Iwasawa decomposition.  Note that we only consider involutions of $\g\l(m|n)$ that fix the center.

\renewcommand{\arraystretch}{1.2}

\begin{tabular}{ |c|c|c| } 
\hline 
Symmetric Pair & Spherical? & Iwasawa decomposition? \\
\hline
$(\g,\g_{\ol{0}})$ & Iff $\g=\g_{\ol{0}}$ & Iff $\g=\g_{\ol{0}}$  \\
\hline
\makecell{$(\g\l(m|n)$,\\ $\g\l(r|s)\times\g\l(m-r|n-s))$} & \makecell{Iff $r\geq m-r$ and $s\geq n-s$\\ or $r\leq m-r$ and $s\leq n-s$}    & Same condition  \\
\hline
$(\g\l(m|n),\o\s\p(m|2n))$ & Yes & Yes  \\
\hline 
$(\g\l(n|n),\p(n))$ & Yes & No \\ 
\hline
$(\g\l(n|n),\q(n))$ & Yes & Yes\\
\hline
\makecell{$(\o\s\p(m|2n)$,\\$\o\s\p(r|2s)\times\o\s\p(m-r,2n-2s))$}& \makecell{Iff $r\geq m-r$ and $s\geq n-s$\\ or $r\leq m-r$ and $s\leq n-s$} & Same condition \\
\hline
$(\o\s\p(2m,2n),\g\l(m|n))$ & Yes & Yes \\
\hline
$(\p(n),\p(r)\times\p(n-r))$ & Iff $r=1$ & No \\
\hline 
$(\p(n),\g\l(r|n-r))$ & Iff $n=2,3$ & No  \\
\hline
	$(\mathfrak{d}(2|1;\alpha),\o\s\p(2|2)\times\s\o(2))$ &  Yes & Yes \\
\hline
$(\a\b(1|3),\g\o\s\p(2|4))$ & Yes & Yes\\
\hline
$(\a\b(1|3),\s\l(1|4))$ & Yes & Yes\\
\hline
$(\a\b(1|3),\mathfrak{d}(2|1;2))$ & Yes & Yes\\
\hline
$(\g(1|2),\mathfrak{d}(2|1;3))$ & Yes &  Yes\\
\hline
$(\g(1|2),\o\s\p(3|2)\times\s\l_2)$ & No &  No\\
\hline
\end{tabular}

\subsection{$G$ as a spherical supervariety}  Let $G$ be a quasireductive supergroup.  Then $G\times G$ acts homogeneously on $G$ by left and right translation, and this identifies $G$ as a symmetric supervariety with respect to the involution $\theta$ of $G\times G$ which swaps the factors. 

Some is already known about the structure of $\C[G]$ as a representation.  For instance, in \cite{serganova2011quasireductive}, the structure as a $G$-module under left translation was computed and was shown to be a sum of injective modules.  In \cite{la2012donkin} a filtration of $\C[GL(m|n)]$ as a $G\times G$-module was constructed following the ideas of Donkin and Koppinen in the modular case, using the highest weight category structure of representations of $GL(m|n)$.  Serganova's result on the structure of $\C[G]$ under left translation also follows from Green's work on coalgebras in \cite{green1976locally}, generalized to the setting of supercoalgebras.  We state some further results on $\C[G]$ looking at its structure as a $G\times G$-module that are straightforward extensions of results found in \cite{green1976locally}, in particular on indecomposable block summands and the socle of $\C[G]$.  Then we state a result that describes the Loewy layers of the socle filtration of $\C[G]$ (\cref{loewy_layers}) which the author has not found the literature.

\begin{thm} Let $\g$ be a quasireductive Lie superalgebra and consider the supersymmetric pair $(\g\times\g,\g)$ defined by the involution $\theta$ of $\g\times\g$ which swaps the factors.  Then this supersymmetric pair admits an Iwasawa decomposition if and only if $\g$ is Cartan-even.  
\end{thm}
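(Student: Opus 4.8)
The plan is to carry out the $\theta$-eigenspace computation explicitly and then invoke the Iwasawa criterion of the proposition above. Writing $\theta(x,y)=(y,x)$, the fixed subalgebra is the diagonal $\k=\{(x,x):x\in\g\}\cong\g$ and the $(-1)$-eigenspace is the anti-diagonal $\p=\{(x,-x):x\in\g\}$. In particular $\p_{\ol{0}}=\{(x,-x):x\in\g_{\ol{0}}\}$, and $(x,-x)$ is semisimple in $(\g\times\g)_{\ol{0}}=\g_{\ol{0}}\times\g_{\ol{0}}$ exactly when $x$ is semisimple in $\g_{\ol{0}}$. Since $[(x,-x),(y,-y)]=([x,y],[x,y])$, two such elements commute in $\g\times\g$ if and only if $[x,y]=0$ in $\g$. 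Hence a maximal toral subalgebra $\a\sub\p$ is precisely $\a=\{(h,-h):h\in\h_{\ol{0}}\}$ for a Cartan subalgebra $\h_{\ol{0}}$ of $\g_{\ol{0}}$ (if $(x,-x)$ is semisimple and centralizes $\a$ then $x$ centralizes $\h_{\ol{0}}$, forcing $x\in\h_{\ol{0}}$), so that $\a\cong\h_{\ol{0}}$.

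Next I would compute $C(\a)$, the centralizer of $\a$ in $\g\times\g$. A pair $(u,v)$ centralizes $\a$ iff $[u,h]=0=[v,h]$ for all $h\in\h_{\ol{0}}$, i.e.\ iff $u$ and $v$ both lie in the centralizer of $\h_{\ol{0}}$ in $\g$; by the definition of Cartan subalgebra for quasireductive Lie superalgebras recalled in Section~4, that centralizer is the Cartan subalgebra $\h$ of $\g$ containing $\h_{\ol{0}}$. Thus $C(\a)=\h\times\h$, and its $\theta$-eigenspace decomposition reads
\[
C(\a)\cap\k=\{(x,x):x\in\h\}\cong\h,\qquad C(\a)\cap\p=\{(x,-x):x\in\h\}\cong\h.
\]

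Finally, the proposition states that $(\g\times\g,\k)$ admits an Iwasawa decomposition if and only if $C(\a)\cap\p=\a$. Since $\a\sub C(\a)\cap\p$ always, and the two spaces have just been identified with $\h_{\ol{0}}\sub\h$, this equality holds exactly when $\h=\h_{\ol{0}}$, i.e.\ when $\h_{\ol{1}}=0$, which is by definition the condition that $\g$ be Cartan-even. This settles both directions at once, and also shows the answer is independent of the choice of $\h_{\ol{0}}$ since all Cartan subalgebras of $\g_{\ol{0}}$ are conjugate. The argument is essentially bookkeeping with the diagonal and anti-diagonal; the only step using structural input is the identification of the centralizer of $\h_{\ol{0}}$ in $\g$ with the Cartan subalgebra $\h$, and since that is precisely how Cartan subalgebras of quasireductive Lie superalgebras are defined, I do not anticipate a genuine obstacle.
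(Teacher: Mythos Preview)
Your proof is correct and follows essentially the same approach as the paper's: identify $\a=\{(h,-h):h\in\h_{\ol{0}}\}$, compute its centralizer in $\g\times\g$ as $\h\times\h$ where $\h$ is the Cartan subalgebra of $\g$, and conclude via the criterion $C(\a)\cap\p=\a$. The paper's version is terser (it phrases the final step as checking when $C(\a)$ equals $\h_{\ol{0}}\times\h_{\ol{0}}$, which amounts to the same thing), but your argument fills in the intermediate verifications that the paper leaves implicit.
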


\begin{proof}
	In this case a maximal toral subalgebra of the (-1)-eigenspace is given by $\a=\{(h,-h):h\in\h_{\ol{0}}\}$ where $\h_{\ol{0}}\sub\g_{\ol{0}}$ is a Cartan subalgebra of $\g_{\ol{0}}$.  Therefore the centralizer of $\a$ is just the centralizer of $\h_{\ol{0}}\times\h_{\ol{0}}$ in $\g\times\g$.  This is equal to $\h_{\ol{0}}\times\h_{\ol{0}}$ if and only if $\h_{\ol{0}}$ is a Cartan subalgebra of $\g$, i.e.\ $\g$ is Cartan-even. 
\end{proof}

\begin{prop}
	If $G$ is Cartan-even, the finite-dimensional irreducible representations of $G\times G$ are exactly those of the form $L_1\boxtimes L_2$ for finite-dimensional irreducible representations $L_1,L_2$ of $G$.
\end{prop}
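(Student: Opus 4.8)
The plan is to isolate two facts and combine them. The first is that every finite-dimensional irreducible representation $L$ of a Cartan-even quasireductive supergroup is \emph{of type $M$}, meaning $\End_G(L)=\C$ (in particular there is no nonzero odd self-intertwiner). The second is a general Schur-type principle: if $G,H$ are algebraic supergroups and every finite-dimensional irreducible $G$-module has $\End_G=\C$, then every finite-dimensional irreducible $G\times H$-module is isomorphic to $L_1\boxtimes L_2$ for finite-dimensional irreducibles $L_1$ of $G$ and $L_2$ of $H$, and conversely every such $L_1\boxtimes L_2$ is irreducible. Applying the second fact with $H=G$, using the first, gives the proposition.

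For the first fact I would work with the Harish-Chandra pair $(G_0,\g)$. Since $\g$ is Cartan-even, $\h=\h_{\ol{0}}$ is a genuine Cartan subalgebra of $\g$; all roots are nonzero on it, so a generic functional yields a root-space decomposition $\g=\n^-\oplus\h\oplus\n$ with $\b:=\h\oplus\n$ a hyperborel (maximality is checked as in \cref{defn_borel}: enlarging $\b_{\ol 1}$ would force some $[\g_\alpha,\g_{-\alpha}]\ne 0$ into $[\b_{\ol 1},\b_{\ol 1}]$, violating the containment in $[\b_{\ol 0},\b_{\ol 0}]$). For finite-dimensional irreducible $L$: reductivity of $G_0$ gives an $\h$-weight decomposition of $L$; choosing a $\b$-maximal weight $\lambda$ and $0\ne v\in L_\lambda$, the PBW decomposition $U(\g)=U(\n^-)U(\h)U(\n)$ together with $\n v=0$ forces $L=U(\n^-)v$, hence $L_\lambda=\C v$ is one-dimensional. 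Any $\phi\in\End_G(L)$ commutes with $\h$, hence preserves $L_\lambda=\C v$; if $\phi$ is odd it sends the homogeneous vector $v$ into the one-dimensional homogeneous space $L_\lambda$ with the wrong parity, so $\phi(v)=0$ and therefore $\phi=0$. Thus $\End_G(L)$ is a purely even finite-dimensional division superalgebra over the algebraically closed field $\C$, so $\End_G(L)=\C$. (For $G$ connected this is literally $\End_G$; for disconnected $G$ a $G$-intertwiner is in particular a $\g$-intertwiner, and a small extra check handles $G$-irreducibles that are not $\g$-irreducible.)

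For the second fact, let $W$ be a finite-dimensional irreducible $G\times H$-module and let $S\sub W$ be an irreducible $G$-submodule (nonzero since $W\ne 0$). Because the $H$-structure supercommutes with the $G$-structure, each element of $H_0$ and each $u\in\h$ acts on $W$ by a $G$-module map of appropriate parity, hence carries $S$ to a $G$-submodule isomorphic to $S$, to $\Pi S$, or to $0$; therefore the $\{S,\Pi S\}$-isotypic part of $W$ is a nonzero $G\times H$-submodule, so it is all of $W$, and $W$ is $G$-semisimple of this isotypic type. Now type $M$ of $S$ makes $S\hat\otimes(-)$ an equivalence from super vector spaces onto the $\{S,\Pi S\}$-isotypic $G$-modules, with inverse $\Hom_G(S,-)$, so the counit $S\hat\otimes\Hom_G(S,W)\to W$ is an isomorphism of $G$-modules. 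Writing $V:=\Hom_G(S,W)$ with its induced $H$-action, any nonzero $H$-submodule $V'\sub V$ produces a nonzero $G\times H$-submodule $S\hat\otimes V'$ of $W$, forcing $V'=V$; hence $V$ is an irreducible $H$-module $T$ and $W\cong S\boxtimes T$. The same equivalence yields the converse (irreducibility of $L_1\boxtimes L_2$) and uniqueness of the pair.

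The main obstacle is the second fact, precisely because representation categories of supergroups are not semisimple, so one cannot simply restrict $W$ to $G$ and decompose it at the outset; instead one bootstraps $G$-semisimplicity of $W$ from irreducibility over the product. Two points need care: making precise the assertion that $H$ acts on $W$ by $G$-module maps (cleanest in the Harish-Chandra / smash-product picture, or dually via $\C[H]$-comodules), and recognizing that the argument genuinely uses type $M$ — if $G$ had a queer-type irreducible $S$, as happens for the non-Cartan-even group $Q(n)$, the counit would only give $W\cong S\hat\otimes_{\End_G(S)}V$ and $S\boxtimes S$ would be reducible. So the Cartan-even hypothesis is essential, and it is used exactly at the step establishing $\End_G(L)=\C$.
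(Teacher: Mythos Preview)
Your proof is correct but follows a different path from the paper's.  Both arguments rest on the type-$M$ fact that $\End_G(L)=\C$ for irreducible $L$ when $G$ is Cartan-even; you prove this carefully via one-dimensionality of the highest weight space, while the paper simply asserts it and cites Jacobson density for the forward direction.  The real divergence is in the converse: the paper argues directly by highest weight theory on $G\times G$ --- choosing a Borel $B$ of $G$, the product $B\times B$ is a Borel of $G\times G$, the irreducible $L$ has a one-dimensional highest weight space of weight $(\lambda_1,\lambda_2)$, and since $L_B(\lambda_1)\boxtimes L_B(\lambda_2)$ is irreducible (by the forward direction) with the same highest weight, the two must coincide.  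Your isotypic-component argument via the equivalence $S\boxtimes(-)\leftrightarrow\Hom_G(S,-)$ is longer but more robust: it works for $G\times H$ with only $G$ assumed Cartan-even, it avoids setting up highest weight theory for the product group, and it makes explicit exactly where type $M$ enters (and why the statement fails for $Q(n)$).  The paper's route is quicker precisely because the surrounding machinery of Borels and highest weights for quasireductive groups is already in place.  One notational collision: you use $\h$ for $\operatorname{Lie}(H)$ in the second fact, which clashes with the paper's convention that $\h$ denotes a Cartan subalgebra.
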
 
\begin{proof}
	A representation of this form is irreducible because $\End_G(V_i)\cong\C$ for each $i$ and the Jacobson density theorem.  Conversely, if $L$ is an irreducible representation of $G\times G$ then after choosing a Borel subgroup, it has a highest weight $\lambda_1+\lambda_2$, where $\lambda_i$ is a weight of $i$th copy of $G$ in the direct product.  Thus $L=L_B(\lambda_1)\boxtimes L_B(\lambda_2)$.
\end{proof}
\begin{definition}
	Let $V$ be a finite-dimensional $G$-module corresponding to the coaction $V\to\C[G]\otimes V$.  Define $\epsilon_V:V\boxtimes V^*\to\C[G]$ to be the canonical $G\times G$-equivariant map corresponding to the coaction.  Notice that it is always nonzero if $V$ is nonzero.  Equivalently, $\epsilon_V$ may be defined by Frobenius reciprocity; it is the unique element of $\Hom_{G\times G}(V\boxtimes V^*,\C[G])$ that corresponds to the natural pairing $V\otimes V^*\to\C$ under the isomorphism
	\[
	\Hom_{G\times G}(V\boxtimes V^*,\C[G])\cong\Hom_{G}(V\otimes V,\C)
	\]
\end{definition}
\begin{remark}
	If $V$ is a finite-dimensional $G$-representation then there is a canonical isomorphism of $G\times G$-modules $V\boxtimes V^*\cong(\Pi V)\boxtimes(\Pi V)^*$, and this map factors $\epsilon_V$ through $\epsilon_{\Pi V}$.  In particular, $\operatorname{Im}\epsilon_{V}=\operatorname{Im}\epsilon_{\Pi V}$.
\end{remark}

For the rest of this section we will assume that $G$ is Cartan-even.  Given an irreducible representation $L$ of $G$, the map $\epsilon_L:L\boxtimes L^*\to\C[G]$ is injective by irreducibility and the fact that $\epsilon_L$ is not the zero map.  In this way we obtain a natural inclusion 
\[
\bigoplus\limits_L L\boxtimes L^*\sub\operatorname{soc}(\C[G]),
\]
where the sum runs over all irreducible representations of $G$ up to parity. We now go about showing this is the entire socle. 

Let $B'$ be a Borel subgroup of $G$ (as defined in \cite{serganova2011quasireductive}) and $(B')^{-}$ its opposite Borel.  Let $B$ be a hyperborel subgroup containing $B'$ and $B^{-}$ a hyperborel subgroup containing $(B')^-$.  Then $B\times B^-$ is a hyperborel of $G\times G$, and $G$ is is $B\times B^-$-spherical.  Further, $(B^-)_0$ is the Borel subgroup of $B_0$ in $G_0$.  
\begin{lemma}
	If $L$ is an irreducible representation of $G$, then $L^{(B)}=L^{(B')}$.  
\end{lemma}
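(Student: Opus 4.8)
The plan is to establish the two inclusions $L^{(B)}\subseteq L^{(B')}$ and $L^{(B')}\subseteq L^{(B)}$ separately, the first being purely formal. Since $B'\subseteq B$, any homogeneous $v\in L$ with $B\cdot v\subseteq\C v$ certainly satisfies $B'\cdot v\subseteq\C v$; thus every $B$-eigenvector is a $B'$-eigenvector, and taking spans gives $L^{(B)}\subseteq L^{(B')}$.

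For the reverse inclusion I would argue by a dimension count. Because $G$ is Cartan-even, a Cartan subalgebra $\h=\h_{\ol 0}$ of $\g$ is purely even, and since $\h_{\ol 0}\subseteq\b'_{\ol 0}$ this forces $\h\subseteq\b'$; hence $\b'$ is a Borel subalgebra containing a Cartan subalgebra, so ordinary highest weight theory applies (see \cite{musson2012lie}). Consequently the finite-dimensional irreducible $L$ is a highest weight module for $\b'$ with a one-dimensional highest weight space, and the space of $\b'$-eigenvectors coincides with that highest weight line, so $\dim L^{(B')}=1$. On the other hand, $B$ is a hyperborel, so $\dim L^{(B)}\ge 1$ by \cref{hyperborel_1_dim}. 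Combining these, $0\neq L^{(B)}\subseteq L^{(B')}$ together with $\dim L^{(B')}=1$ forces $L^{(B)}=L^{(B')}$.

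The one point that deserves care, and which I regard as the only non-formal step, is the assertion $\dim L^{(B')}=1$ for the Serganova-style Borel $\b'$. To make it self-contained I would write $\b'=\h_{\ol 0}\ltimes\n'$ with $\n'$ its nilpotent radical, note that $[\b',\b']=\n'$ (using $[\h_{\ol 0},\g_\alpha]=\g_\alpha$ for each root $\alpha$), so that a $\b'$-eigenvector is exactly an $\h_{\ol 0}$-weight vector annihilated by $\n'$ (the eigencharacter vanishes on $[\b',\b']$, and on all odd elements); then I would use a triangular decomposition $\g=(\n')^{-}\oplus\h_{\ol 0}\oplus\n'$ arising from the opposite Borel, so that for any $\n'$-invariant weight vector $w$ of weight $\mu$ the PBW theorem plus irreducibility give $L=U((\n')^{-})w$, whence every weight of $L$ is $\le\mu$ in the $\n'$-order and $\dim L_\mu=1$. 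Alternatively one can bypass the dimension count entirely: the same ordering argument shows the $\b'$-highest weight is also the $\b$-highest weight (the roots of $\n$ form a pointed cone containing those of $\n'$), so for a $\b'$-highest weight vector $v$ of weight $\mu$ and any root vector $x_\beta$ of $\n$ one has $x_\beta v\in L_{\mu+\beta}=0$, giving $\n v=0$ and hence $v\in L^{(B)}$ directly. I expect the bookkeeping with the two root orders to be the only mildly technical part, and it is entirely standard.
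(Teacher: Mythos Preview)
Your proposal is correct and follows essentially the same approach as the paper: the paper's one-line proof is exactly the inclusion $L^{(B)}\subseteq L^{(B')}$ together with the dimension squeeze $1\le\dim L^{(B)}\le\dim L^{(B')}=1$, invoking \cref{hyperborel_1_dim} for the lower bound and highest weight theory for the equality $\dim L^{(B')}=1$. You have simply unpacked the last equality (which the paper asserts without comment), and your alternative direct argument via weight-ordering is also fine but unnecessary once the dimension count is in hand.
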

\begin{proof}
	Indeed $L^{(B)}\sub L^{(B')}$ but by \cref{hyperborel_1_dim} $1\leq \dim L^{(B)}\leq \dim L^{(B')}=1$.  
\end{proof}

\begin{definition} For a hyperborel subgroup $B$ of $G$, we say an integral weight $\lambda$ is $B$-dominant if there exists an irreducible representation $L$ of $G$ such that $\Lambda_B(L)=\{\lambda\}$.
\end{definition}
Recall that (for instance by the Peter-Weyl theorem), 
\[
\Lambda^+_{B_0\times(B^-)_0}(G_0)=\{(\lambda,-\lambda):\lambda\text{ is a }B_0\text{-dominant weight}\}.
\]

\begin{lemma}
	We have
	\[
	\Lambda^+_{B\times B^-}(G)=\{(\lambda,-\lambda):\lambda\text{ is a }B\text{-dominant weight}\}.
	\]
\end{lemma}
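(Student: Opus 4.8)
The plan is to prove the two inclusions separately, using the matrix‑coefficient maps $\epsilon_L$, the characterization of affine spherical supervarieties (\cref{spher_affine_char}), the fact that $B$ contains the Borel $B'$ (and $B^-$ contains $(B')^-$), and the Peter--Weyl description of $\Lambda^+_{B_0\times(B^-)_0}(G_0)$ recalled just above.

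For the inclusion $\{(\lambda,-\lambda):\lambda\text{ is }B\text{-dominant}\}\subseteq\Lambda^+_{B\times B^-}(G)$: given a $B$-dominant weight $\lambda$, I would pick a finite-dimensional irreducible $G$-module $L$ with $\Lambda_B(L)=\{\lambda\}$. Since $B\supseteq B'$ we get $\dim L^{(B)}=1$, spanned by a vector $v$ which, by the preceding lemma $L^{(B)}=L^{(B')}$ and highest weight theory, has weight $\lambda$ (the highest weight of $L$). Dually $\dim(L^*)^{(B^-)}=1$; I claim its weight is $-\lambda$, because by that lemma applied to the pair $(B^-,(B')^-)$ one has $(L^*)^{(B^-)}=(L^*)^{((B')^-)}$, whose weight is the $(B^-)_0$-highest weight of $L^*$, and since $(B^-)_0$ is opposite to $B_0$ this is the $B_0$-lowest weight of $L^*$, namely $-\lambda$. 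Hence $v\boxtimes(L^*)^{(B^-)}$ is a $B\times B^-$-eigenvector of $L\boxtimes L^*$ of weight $(\lambda,-\lambda)$; since $\epsilon_L$ is $G\times G$-equivariant and injective, its image is a nonzero $B\times B^-$-highest weight function of weight $(\lambda,-\lambda)$ in $\C[G]$, so $(\lambda,-\lambda)\in\Lambda^+_{B\times B^-}(G)$.

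For the reverse inclusion, I would take $0\neq f\in\C[G]^{N\times N^-}$ of $T\times T$-weight $(\mu,\nu)$. Since $G$ is affine and $B\times B^-$-spherical, \cref{spher_affine_char} gives that $f$ is non-nilpotent, hence $f|_{G_0}\neq0$; it is then a nonzero $B_0\times(B^-)_0$-highest weight function on the reductive group $G_0$, so the recalled Peter--Weyl statement forces $\nu=-\mu$ (and $\mu$ to be $B_0$-dominant). It remains to upgrade $\mu$ to a $B$-dominant weight. Viewing $\C[G]$ under the left-translation action of $G$, the element $f$ is annihilated by $\mathfrak n$, hence by $\mathfrak n'\subseteq\mathfrak n$, and has $\mathfrak h_{\ol 0}$-weight $\mu$, so it is a $B'$-highest weight vector. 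The $G$-submodule $W:=\langle G\cdot f\rangle\subseteq\C[G]$ is finite-dimensional by local finiteness of $\C[G]$ and is generated by this highest weight vector, so $W$ is a finite-dimensional highest weight module of highest weight $\mu$; therefore its irreducible quotient is a finite-dimensional irreducible $G$-module $L_{B'}(\mu)$, and since $B\supseteq B'$ it satisfies $\Lambda_B(L_{B'}(\mu))=\{\mu\}$, exhibiting $\mu$ as $B$-dominant. Thus $(\mu,\nu)=(\mu,-\mu)$ lies in the right-hand side.

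The step I expect to be the main obstacle is the bookkeeping around the auxiliary Borel $B'$: one must be sure that a $B\times B^-$-highest weight datum on $G$ is faithfully detected on $G_0$ (this is exactly what the non-nilpotency clause of \cref{spher_affine_char} provides) and that translating between $B$- and $B'$-highest weights — via $L^{(B)}=L^{(B')}$ and the inclusion $\mathfrak n'\subseteq\mathfrak n$ — never alters the relevant weights. Once these are in place, both inclusions are formal consequences of the equivariance and injectivity of $\epsilon_L$ together with elementary highest weight theory for $B'$.
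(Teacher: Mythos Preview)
Your proof is correct and follows the same strategy as the paper: the containment $\supseteq$ via the matrix-coefficient embeddings $\epsilon_L:L\boxtimes L^*\hookrightarrow\C[G]$, and the containment $\subseteq$ by restricting a $B\times B^-$-highest weight function to $G_0$ (using non-nilpotency from \cref{spher_affine_char}) and invoking the Peter--Weyl description of $\Lambda^+_{B_0\times(B^-)_0}(G_0)$. The paper's write-up is terser: it stops once restriction places $(\mu,\nu)$ inside $\Lambda^+_{B_0\times(B^-)_0}(G_0)=\{(\lambda,-\lambda):\lambda\ B_0\text{-dominant}\}$, whereas you explicitly upgrade $\mu$ from $B_0$-dominant to $B$-dominant by generating a finite-dimensional highest weight module from $f$ under left translation; this extra paragraph is a genuine clarification of a point the paper leaves implicit.
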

\begin{proof}
	By the inclusion $\Lambda^+_{B\times B^-}(G)\sub\Lambda^+_{B_0\times(B^-)_0}(G_0)$ we know that $\Lambda^+_{B\times B^-}(G)$ must be contained in the RHS.  However our socle computation above shows that $L(\lambda)\boxtimes L(\lambda)^*\sub \C[G]$ for all $B$-dominant weights $\lambda$, and this is exactly the $G\times G$ irreducible representation of highest weight $(\lambda,-\lambda)$.
\end{proof}  

\begin{cor}\label{socle_G}
	$\operatorname{soc}(\C[G])\cong\bigoplus\limits_L L\boxtimes L^*$, where the sum runs over all irreducible representations of $G$ up to parity.
\end{cor}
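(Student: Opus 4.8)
The plan is to show that the natural inclusion $\bigoplus_L L\boxtimes L^*\hookrightarrow\operatorname{soc}(\C[G])$ constructed above is an equality. The inclusion is already established, so it remains only to prove that every irreducible $G\times G$-submodule of $\C[G]$ lies in the image. Since $G$ is Cartan-even, such a submodule is of the form $L_1\boxtimes L_2$ for irreducibles $L_1,L_2$ of $G$, by the classification proposition above. So the task reduces to: if $L_1\boxtimes L_2\hookrightarrow\C[G]$, then $L_2\cong L_1^*$ (up to parity) and the embedding is a scalar multiple of $\epsilon_{L_1}$.

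The key computational tool will be the highest-weight function. First I would note that $\C[G]$ is $B\times B^-$-spherical, so by \cref{spher_affine_char}(3) we have $\dim\C[G]^{N\times N^-}_{\mu}\leq 1$ for every weight $\mu$ of $T\times T$. Now suppose $L_1\boxtimes L_2\hookrightarrow\C[G]$. Pick the hyperborel-highest weight line in $L_1$ for $B$ and in $L_2$ for $B^-$; concretely, $(L_1\boxtimes L_2)^{N\times N^-}$ is one-dimensional (by \cref{hyperborel_1_dim}, $\dim L_i^{(B)}=\dim L_i^{(B^-)}=1$), spanned by a vector of weight $(\lambda_1,\lambda_2)$ where $\lambda_1$ is the $B$-highest weight of $L_1$ and $\lambda_2$ is the $B^-$-highest weight of $L_2$. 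This gives a nonzero element of $\C[G]^{N\times N^-}$ of weight $(\lambda_1,\lambda_2)$, so $(\lambda_1,\lambda_2)\in\Lambda^+_{B\times B^-}(G)$. By the lemma computing $\Lambda^+_{B\times B^-}(G)$, this forces $\lambda_2=-\lambda_1$ and $\lambda_1$ to be $B$-dominant.

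Next I would identify $L_2$ with $L_1^*$ up to parity: the $B^-$-highest weight of $L_1^*$ is $-\lambda_1$ (the negative of the $B$-highest weight of $L_1$), and since an irreducible representation of $G$ is determined up to parity shift by its hyperborel-highest weight together with the parity of the highest-weight vector — and $L\boxtimes L^*\cong(\Pi L)\boxtimes(\Pi L)^*$ kills the residual parity ambiguity in the tensor — we get $L_1\boxtimes L_2\cong L_1\boxtimes L_1^*$ as $G\times G$-modules. Finally, to see the embedding agrees with $\epsilon_{L_1}$ up to scalar: by Frobenius reciprocity (the isomorphism $\Hom_{G\times G}(L_1\boxtimes L_1^*,\C[G])\cong\Hom_G(L_1\otimes L_1,\C)$ recalled in the definition of $\epsilon_V$), together with the fact that $\Hom_G(L_1\otimes L_1,\C)$ is at most one-dimensional since $L_1$ is irreducible and $\operatorname{End}_G(L_1)\cong\C$, there is (up to scalar) a unique nonzero $G\times G$-map $L_1\boxtimes L_1^*\to\C[G]$, namely $\epsilon_{L_1}$, which is injective. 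Hence every irreducible summand of $\operatorname{soc}(\C[G])$ is one of the $\epsilon_L(L\boxtimes L^*)$, and since the socle is semisimple and these occur with multiplicity one each (again by the sphericity bound $\dim\C[G]^{N\times N^-}_\mu\leq 1$), we conclude $\operatorname{soc}(\C[G])\cong\bigoplus_L L\boxtimes L^*$.

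The main obstacle I anticipate is the bookkeeping around parity: making precise that $L\boxtimes L^*$ and $(\Pi L)\boxtimes(\Pi L)^*$ coincide so that "the sum over irreducibles up to parity" is the correct indexing, and checking that the hyperborel-highest weight, which by \cref{hyperborel_1_dim} determines an irreducible only once a parity of the highest-weight vector is also fixed, nonetheless pins down $L_2$ as $L_1^*$ inside the tensor product where the parity ambiguity cancels. Everything else is a direct application of the sphericity characterization \cref{spher_affine_char}, the weight-lattice lemma for $G$, and Frobenius reciprocity, all of which are available above.
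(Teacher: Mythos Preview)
Your proposal is correct and follows essentially the same route as the paper: the corollary is deduced from the lemma computing $\Lambda^+_{B\times B^-}(G)$ together with the sphericity bound $\dim\C[G]^{N\times N^-}_\mu\leq 1$ from \cref{spher_affine_char}. One small simplification: the Frobenius reciprocity step is unnecessary, since once you know the highest weight vector of $L_1\boxtimes L_2$ coincides (up to scalar) with that of $\epsilon_{L(\lambda)}(L(\lambda)\boxtimes L(\lambda)^*)$ inside $\C[G]$, the two irreducible submodules already agree, being generated by the same vector.
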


We explain further the structure of $\C[G]$. Let $\mathsf{Rep}(G)$ denote the category of finite-dimensional representations of $G$.  Then we may decompose $\mathsf{Rep}(G)$ into a sum of simple blocks, where a block $\BB$ is an abelian subcategory of $\mathsf{Rep}(G)$ such that if $\BB'$ is another block distinct from $\BB$, then $\Ext^i(V,V')=\Ext^i(V',V)=0$ for all $i$ and all objects $V$ of $\BB$ and $V'$ of $\BB'$.  A block $\BB$ is simple if it cannot be decomposed into a sum of smaller, nontrivial blocks.  Notice that every block must contain an irreducible representation.

Given a block $\BB$ of $G$, we denote by $\Pi\BB$ the block consisting of all $G$-modules $\Pi V$ where $V$ is in $\BB$. If we write $\text{Bl}_G$ for the set of blocks of $G$, we want to consider the set $\text{Bl}_G/\sim$ where $\sim$ is the equivalence relation on blocks generated by $\BB\sim\Pi\BB$ for all blocks $\BB$.  For $\BB\in\text{Bl}_G/\sim$, we write $\operatorname{Irr}\BB$ for the set of irreducible representations that appear in $\BB$ up to parity.  The following is an analogue of theorem (1.5g) part (ii) and theorem (1.6a) in \cite{green1976locally}.
\begin{prop}
	We have as a $G\times G$-module 
	\[
	\C[G]=\bigoplus\limits_{\BB\in\text{Bl}_G/\sim}M_{\BB}
	\]
	where $M_{\BB}$ is an indecomposable $G\times G$-module given by
	\[
	M_{\BB}=\sum\limits_{V\in\BB}\operatorname{Im}\epsilon_V.
	\] 
	Further,
	\[
	\operatorname{soc}(M_{\BB})=\bigoplus\limits_{L\in\operatorname{Irr}\BB}L\boxtimes L^*.
	\]
\end{prop}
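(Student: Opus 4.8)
The plan is to transcribe the coalgebra argument of Green (\cite{green1976locally}, Theorems~(1.5g)(ii) and~(1.6a)) to the supercoalgebra $\C[G]$, feeding in the socle computation of \cref{socle_G} as the input that controls which simple $G\times G$-modules can occur. The main tool is the coefficient space $\operatorname{Im}\epsilon_V\subseteq\C[G]$ of a finite-dimensional $G$-module $V$, whose standard properties I would first record in the super setting: (i) $\C[G]=\bigcup_V\operatorname{Im}\epsilon_V$, since every regular function on $G$ is a matrix coefficient of some finite-dimensional representation; (ii) each $\operatorname{Im}\epsilon_V$ is a finite-dimensional $G\times G$-stable sub-supercoalgebra of $\C[G]$, realized as a quotient of $V\boxtimes V^*$, so each of its $G\times G$-composition factors has the form $L_1\boxtimes L_2$ with $L_1$ a composition factor of $V$; (iii) $\operatorname{Im}\epsilon_{V'}\subseteq\operatorname{Im}\epsilon_V$ whenever $V'$ is a subquotient of $V$, and $\operatorname{Im}\epsilon_{V\oplus W}=\operatorname{Im}\epsilon_V+\operatorname{Im}\epsilon_W$; and (iv) $\operatorname{Im}\epsilon_V=\operatorname{Im}\epsilon_{\Pi V}$, by the remark identifying $V\boxtimes V^*$ with $(\Pi V)\boxtimes(\Pi V)^*$. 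Verifying (i)--(iv) is routine once the $\Z/2$-signs in the coproduct of $\C[G]$ are pinned down.

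Next I would produce the decomposition as a sum. Decomposing each finite-dimensional $V$ along the block decomposition $V=\bigoplus_{\BB}V_{\BB}$ of $\mathsf{Rep}(G)$ and applying (iii) gives $\operatorname{Im}\epsilon_V=\sum_{\BB}\operatorname{Im}\epsilon_{V_{\BB}}$; taking the union over all $V$ and using (i) together with (iv) to merge a block with its parity shift then yields $\C[G]=\sum_{\BB\in\text{Bl}_G/\sim}M_{\BB}$, where $M_{\BB}=\bigcup_{V\in\BB}\operatorname{Im}\epsilon_V$ is a directed union (directed by $V\mapsto V\oplus W$) and is itself a sub-supercoalgebra of $\C[G]$.

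To see that the sum is direct and to identify the socles, I would argue through socles. By \cref{socle_G} every simple $G\times G$-submodule of $\C[G]$ has the form $L\boxtimes L^*$; combined with (ii), any simple submodule of $\operatorname{Im}\epsilon_V$ is of this form with $L$ a composition factor of $V$, so for $V\in\BB$ it lies in $\bigoplus_{L\in\operatorname{Irr}\BB}L\boxtimes L^*$. Passing to the union, $\operatorname{soc}(M_{\BB})\subseteq\bigoplus_{L\in\operatorname{Irr}\BB}L\boxtimes L^*$, and the reverse inclusion holds because each $\epsilon_L$ embeds the simple module $L\boxtimes L^*$ (simple since $G$ is Cartan-even) into $M_{\BB}$; hence $\operatorname{soc}(M_{\BB})=\bigoplus_{L\in\operatorname{Irr}\BB}L\boxtimes L^*$. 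Since distinct classes in $\text{Bl}_G/\sim$ have disjoint sets $\operatorname{Irr}\BB$, the modules $\operatorname{soc}(M_{\BB})$ are pairwise disjoint, and then a short induction on the number of summands --- using that a nonzero submodule of $\C[G]$ has nonzero semisimple socle, which must then split along the $M_{\BB}$'s --- shows that $\sum_{\BB}M_{\BB}$ is direct. This gives $\C[G]=\bigoplus_{\BB\in\text{Bl}_G/\sim}M_{\BB}$ together with the asserted socle.

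Finally, indecomposability of each $M_{\BB}$ is the step I expect to carry the real content, mirroring Green's linking argument. Suppose $M_{\BB}=A\oplus B$ as $G\times G$-modules with $A,B\neq 0$. Because the coproduct $\Delta$ intertwines the two translation actions with the $G$-coactions on the two tensor factors, $A$ and $B$ are automatically sub-supercoalgebras and $M_{\BB}=A\times B$ as a coalgebra coproduct; consequently every sub-supercoalgebra of $M_{\BB}$ --- in particular every $\operatorname{Im}\epsilon_V$ with $V\in\BB$ --- splits compatibly as $(\operatorname{Im}\epsilon_V\cap A)\times(\operatorname{Im}\epsilon_V\cap B)$. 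The socle splitting partitions $\operatorname{Irr}\BB$ nontrivially as $\operatorname{Irr}(A)\sqcup\operatorname{Irr}(B)$. Since $\BB$ is a single block of $G$ up to parity, its irreducibles are connected under $\operatorname{Ext}^1$, so after possibly interchanging $A$ and $B$ there exist $L\in\operatorname{Irr}(A)$, $L'\in\operatorname{Irr}(B)$ and a nonsplit extension $0\to L'\to V\to L\to 0$. Then $V$ is an indecomposable $G$-module lying in $\BB$, so $\operatorname{Im}\epsilon_V\subseteq M_{\BB}$ splits as above with both factors nonzero --- they contain $\operatorname{Im}\epsilon_L=L\boxtimes L^*\subseteq A$ and $\operatorname{Im}\epsilon_{L'}=L'\boxtimes(L')^*\subseteq B$ --- which contradicts the elementary fact that $\operatorname{Im}\epsilon_V$ is indecomposable \emph{as a coalgebra} whenever $V$ is indecomposable as a comodule (a coalgebra decomposition of $\operatorname{Im}\epsilon_V$ would split $V$, since comodules over a coalgebra coproduct decompose accordingly). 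The hard part is exactly this passage, from the hypothesis that $\BB$ is one block to the conclusion that $M_{\BB}$ cannot be decomposed, via an $\operatorname{Ext}$-linking module and the connectedness of its coefficient coalgebra; the only genuinely new bookkeeping beyond Green's classical argument is the parity identification $\BB\sim\Pi\BB$ and the signs in the coproduct, neither of which disturbs the underlying combinatorics.
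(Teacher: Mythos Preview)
Your proposal is correct and aligned with the paper's approach: the paper does not write out a proof of this proposition at all, stating only that it is ``an analogue of theorem (1.5g) part (ii) and theorem (1.6a) in \cite{green1976locally}'' and a ``straightforward extension'' thereof, with the socle input supplied by \cref{socle_G}. Your outline is precisely the transcription of Green's coalgebra argument the paper has in mind, with the super-specific bookkeeping (the parity identification $\BB\sim\Pi\BB$ via $\operatorname{Im}\epsilon_V=\operatorname{Im}\epsilon_{\Pi V}$, and the use of \cref{socle_G} to pin down which simple $G\times G$-modules can appear) handled correctly.

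One minor remark on presentation: in the indecomposability step you assert that every subcoalgebra of a coalgebra product $A\times B$ splits compatibly. This is true, but the cleanest way to see it (and the way Green essentially argues) is to bypass the subcoalgebra splitting and work directly with the comodule: since $V$ is a $M_{\BB}$-comodule and $M_{\BB}=A\times B$, the indecomposable $V$ is either an $A$-comodule or a $B$-comodule; but its composition factors $L,L'$ lie over $A$ and $B$ respectively, a contradiction. This is logically equivalent to what you wrote but avoids having to verify the subcoalgebra splitting separately.
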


\begin{remark} It follows $M_{\BB}$ is finite-dimensional if and only if $\operatorname{Irr}\BB$ is finite.  This example shows another phenomenon that may occur in the super case: given a spherical $G$-supervariety $X$, $\C[X]$ need not be a direct sum of finite-dimensional $G$-modules.  
\end{remark}

We can say more about the socle filtration of $M_{\BB}$, and thus of $\C[G]$.  Recall that for a finite-dimensional $G$-module $V$, the Loewy length of $V$, which we write as $\ell\ell(V)$, is defined to be the length of a minimal semisimple filtration of $V$ (or equivalently the length of the socle or radical filtration of $V$).  The first of the following results is an analogue of what was essentially known in \cite{green1976locally} for coalgebras.  The author has not found the second result in the literature.  A proof is given for more general coalgebras in \cite{sherman2020layers}.
\begin{thm}\label{loewy_layers}For each block $\BB\in\operatorname{Bl}_G/\sim$ we have:
	\begin{itemize}
		\item \[
		\operatorname{soc}^kM_{\BB}=\sum\limits_{V\in\BB,\ \ell\ell(V)\leq k}\operatorname{Im}\epsilon_V
		\]
		\item For simple $G$-modules $L,L'$ which lie in a block of the equivalence class $\BB$, we have
		\begin{eqnarray*}
			[\operatorname{soc}^kM_{\BB}/\operatorname{soc}^{k-1}M_{\BB}:L'\boxtimes L^*]& = &[L':\operatorname{soc}^kI(L)/\operatorname{soc}^{k-1}I(L)]\\
																					 	 &\hspace{-12em} = &\hspace{-6em}\dim\Hom_G(\PP(L'),\operatorname{soc}^kI(L)/\operatorname{soc}^{k-1}I(L))
		\end{eqnarray*}
	\end{itemize}
	
\end{thm}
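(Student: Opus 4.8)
The plan is to work with the canonical maps $\epsilon_V\colon V\boxtimes V^*\to\C[G]$ and exploit the injective-module description of $\C[G]$ under left translation. First I would recall that, as a $G\times G$-module, $M_{\BB}=\bigoplus_{L\in\operatorname{Irr}\BB}I(L)\boxtimes L^*$ (up to parity bookkeeping), where $I(L)$ is the injective hull of $L$ in $\mathsf{Rep}(G)$; this is the standard coalgebra decomposition underlying the previous proposition, obtained from the left-regular decomposition $\C[G]=\bigoplus_L I(L)\otimes\underline{L}^*$ in \cite{serganova2011quasireductive} by refining the first factor to carry its residual right $G$-action. Given this, the socle filtration of $M_{\BB}$ as a $G\times G$-module is computed factorwise: $\operatorname{soc}^k(I(L)\boxtimes L^*)=\operatorname{soc}^k(I(L))\boxtimes L^*$, since $L^*$ is simple and tensoring with a simple module over $\C$ shifts the socle filtration trivially. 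Reading off layers then gives $[\operatorname{soc}^k M_{\BB}/\operatorname{soc}^{k-1}M_{\BB}:L'\boxtimes L^*]=[\operatorname{soc}^k I(L)/\operatorname{soc}^{k-1}I(L):L']$, and the second equality in the displayed chain is just the general identity $[M:L']=\dim\Hom_G(\PP(L'),M)$ for $\PP(L')$ the projective cover, valid since every layer here is semisimple.

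For the first bullet, the key point is to identify $\operatorname{soc}^k M_{\BB}$ intrinsically in terms of the images $\operatorname{Im}\epsilon_V$. I would argue two inclusions. For ``$\supseteq$'': if $\ell\ell(V)\le k$ then $V$ embeds in a module built from at most $k$ semisimple layers, so $\epsilon_V$ factors through such a module mapped into $\C[G]$; since each $\epsilon_L$ lands in the socle (Corollary \ref{socle_G}) and the socle filtration is compatible with extensions, $\operatorname{Im}\epsilon_V\subseteq\operatorname{soc}^k\C[G]$, hence in $\operatorname{soc}^k M_{\BB}$ after projecting to the block. For ``$\subseteq$'': using $M_{\BB}=\bigoplus_L I(L)\boxtimes L^*$ and the previous proposition's formula $M_{\BB}=\sum_{V\in\BB}\operatorname{Im}\epsilon_V$, it suffices to show $\operatorname{soc}^k(I(L))\boxtimes L^*$ is hit by $\sum_{\ell\ell(V)\le k}\operatorname{Im}\epsilon_V$; but $\operatorname{soc}^k(I(L))$ is itself a finite-dimensional submodule of $I(L)$ of Loewy length $\le k$, and taking $V=\operatorname{soc}^k(I(L))$ one checks $\operatorname{Im}\epsilon_V=\operatorname{soc}^k(I(L))\boxtimes L^*$ from the defining (Frobenius-reciprocity) property of $\epsilon_V$ and the fact that $I(L)$ is the injective hull of $L$, so the coaction of $V$ generates exactly this piece. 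Summing over $L\in\operatorname{Irr}\BB$ finishes it.

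The main obstacle I anticipate is the careful verification that $\operatorname{Im}\epsilon_V$ behaves correctly with respect to the socle filtration — concretely, that $\operatorname{Im}\epsilon_V\subseteq\operatorname{soc}^k\C[G]$ precisely when $\ell\ell(V)\le k$, and not merely ``$\le$'' in one direction. This requires knowing that the right-hand factor $L^*$ in the decomposition of $M_{\BB}$ does not interact with the socle filtration (clear, since it is simple) and, more delicately, that $\epsilon_V$ is injective on $V\boxtimes V^*$ for indecomposable $V$ — or at least that its image has the same Loewy length as $V$. This last point is where one genuinely uses that $G$ is Cartan-even (so that $\End_G(L)=\C$ and highest-weight theory applies), matching the hypothesis in force throughout the section; the general-coalgebra version is exactly what is proved in \cite{sherman2020layers}, so I would either cite that directly for the abstract statement or reproduce the short duality argument: $\operatorname{Im}\epsilon_V\cong V$ as a left $G$-module inside the left-regular representation via injectivity of $I(L)$, whence $\ell\ell(\operatorname{Im}\epsilon_V)=\ell\ell(V)$, and then combine with the factorwise socle computation above.
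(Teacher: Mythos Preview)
The paper does not actually prove this theorem: it says the first item ``is an analogue of what was essentially known in \cite{green1976locally}'' and that ``a proof is given for more general coalgebras in \cite{sherman2020layers}.'' So there is no in-paper argument to compare against, and your closing option of simply citing \cite{sherman2020layers} is precisely what the paper itself does.

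That said, the concrete route you propose rests on a claim that is false: the isomorphism $M_{\BB}\cong\bigoplus_{L\in\operatorname{Irr}\BB}I(L)\boxtimes L^{*}$ does \emph{not} hold as a $G\times G$-module. The decomposition from \cite{serganova2011quasireductive} is only a left $G$-module splitting; the right $G$-action does not act through the $L^{*}$ factor alone. The $GL(1|1)$ picture drawn immediately after the theorem already witnesses this: every simple in the principal block is one-dimensional, so in $I(L)\boxtimes L^{*}$ the right odd operators $\ol{u},\ol{v}$ would act by zero, yet the diagram shows nonzero solid arrows for $\ol{u},\ol{v}$. (More structurally: the left-$I(L)$ summands and the right-$I(L')$ summands share their top and bottom weight vectors but have \emph{different} middle vectors, so the two one-sided direct-sum decompositions cannot be simultaneous.) Consequently the step ``$\operatorname{soc}^{k}(I(L)\boxtimes L^{*})=\operatorname{soc}^{k}I(L)\boxtimes L^{*}$, hence this is the $G\times G$-socle filtration of $M_{\BB}$'' is unjustified, and your derivation of both bullets from it collapses.

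A repair that does work --- and is presumably what the cited references carry out --- is to show that the left-$G$, right-$G$, and $G\times G$ socle filtrations of $\C[G]$ all coincide with the coradical filtration of the coalgebra $\C[G]$. The coradical filtration is intrinsic and manifestly left--right symmetric; and a $G\times G$-stable subspace that is semisimple for both one-sided actions is automatically $G\times G$-semisimple (split into left isotypic components, then observe the right action on each multiplicity space is semisimple). Once the left and bimodule socle filtrations agree, your first-bullet inclusions are fine: $\operatorname{Im}\epsilon_{V}$ has left Loewy length exactly $\ell\ell(V)$ (it is a left-$G$ quotient of $V^{\oplus\dim V}$ and contains $V$ via the counit), giving ``$\supseteq$''; and any element of $\operatorname{soc}^{k}M_{\BB}$ lies in a finite-dimensional left subcomodule $V$ of Loewy length $\le k$ with $V\subseteq\operatorname{Im}\epsilon_{V}$, giving ``$\subseteq$''. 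The multiplicity formula then follows by computing on the left side using $M_{\BB}\cong\bigoplus_{L}I(L)^{\oplus\dim L}$ and identifying the right action on the $k$th layer.
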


\subsection{The case $G=GL(1|1)$} Let $G=GL(1|1)$, and $\g=\operatorname{Lie}G$.  We give a very explicit description of the $\g\times\g$ action on $\C[G]$.  In this case, there is only one block of $\mathsf{Rep}(G)$ which is not semisimple, the principal block $\BB_{0}$, and it contains the irreducible representations where the center of $\g\l(1|1)$ acts trivially.  We draw a picture depicting the local structure of $M_{\BB_0}$ below. 
 
\[
\xymatrix{&&&\bullet \ar[dll]_{\ol{u}}\ar[drr]^{\ol{v}} \ar@{-->}[dl]^{v} \ar@{-->}[dr]_{u}&&&\bullet \ar[dll]_{\ol{u}}\ar[drr]^{\ol{v}} \ar@{-->}[dl]^{v} \ar@{-->}[dr]_{u}&&&\bullet \ar[dll]_{\ol{u}}\ar[drr]^{\ol{v}} \ar@{-->}[dl]^{v} \ar@{-->}[dr]_{u}&&&\\ \hdots&\bullet \ar[drr]_{\ol{v}} \ar@{-->}[dl]_v & \bullet \ar[dll]^{\ol{u}} \ar@{-->}[dr]^u & & \bullet\ar[drr]_{\ol{v}} \ar@{-->}[dl]_v & \bullet \ar[dll]^{\ol{u}} \ar@{-->}[dr]^{u} & &\bullet \ar[drr]_{\ol{v}} \ar@{-->}[dl]_v & \bullet \ar[dll]^{\ol{u}} \ar@{-->}[dr]^u & & \bullet\ar[drr]_{\ol{v}} \ar@{-->}[dl]_{v} & \bullet \ar[dll]^{\ol{u}} \ar@{-->}[dr]^u & \hdots \\\bullet & && \bullet & &&\bullet & & &\bullet & && \bullet & &&\bullet}
\]

Note that $M_{\BB_0}$ is infinite-dimensional since there are infinitely many nonisomorphic simple modules in $\BB_{0}$.  Each dot in the picture represents a weight vector, with the bottom and top rows having even parity and the middle row having odd parity.  We write $u,v$ for the action of the odd weight vectors of $\g\l(1|1)$ by left translation, and $\ol{u},\ol{v}$ for the action of the odd weight vectors by right translation.  One can see rather explicitly here that under left or right translation only this is just a sum of injective modules.

\subsection{Group-graded supergroups and actions}

Next we study spherical varieties for an especially well-understood class of quasireductive supergroups, those which are group-graded.  We first give a definition along with a brief discussion of group-graded supergroups and their actions.  This will closely follow the definitions and theorems of section 4 of \cite{vishnyakova2011complex}, except that we are working in the algebraic category and not the complex analytic category.

Introduce the category $\mathsf{GSV}$ whose objects are supervarieties of the form $X=(|X_0|,\Lambda^\bullet\NN)$ where $X_0$ is a variety and $\NN$ is a coherent sheaf on $X_0$.  In other words the objects are graded supervarieties with a given choice of grading.  This endows all objects of $\mathsf{GSV}$ with a canonical $\Z$-grading on their structure sheaf.  We then define morphisms in this category to be those morphisms of supervarieties that preserve the given $\Z$-gradings.

There is a natural functor $\operatorname{gr}$ from the category of locally graded supervarieties to $\mathsf{GSV}$.  On objects it is given by 
\[
\operatorname{gr}X=(|X|,\bigoplus\limits_{i\geq0}\JJ_X^i/\JJ_X^{i+1}),
\]
so that in particular $|X|=|\operatorname{gr}X|$ and $X(\C)=|\operatorname{gr}X(\C)|$.  Note that the natural map $\Lambda^i \JJ_X/\JJ_X^2\to \JJ_X^i/\JJ_X^{i+1}$ is an isomorphism because of our locally graded assumption.  For a morphism $\psi:X\to Y$ we let $\operatorname{gr}\psi:\operatorname{gr}X\to\operatorname{gr}Y$ be the same map of underlying topological spaces and set 
\[
(\operatorname{gr}\psi)^*:\bigoplus\limits_{i\geq0}\JJ_Y^i/\JJ_Y^{i+1}\to(\operatorname{gr}\psi)_*\bigoplus\limits_{i\geq0}\JJ_X^i/\JJ_X^{i+1}
\]
to be
\[
(\operatorname{gr}\psi)^*(f+\JJ_Y^i)=\psi^*(f)+\JJ_X^i
\]
where $f$ is a section of $\JJ_Y^{i-1}$.

If $X$ and $Y$ are locally graded supervarieties, then $X\times Y$ is a locally graded supervariety in a natural way, and $\JJ_{X\times Y}=p_X^*\JJ_X+p_Y^*\JJ_Y$, where $p_X,p_Y$ are the natural projection maps.  On the other hand, given two graded supervarieties $X'=(|X'|,\Lambda^\bullet \NN_{X'})$, $Y'=(|Y'|,\Lambda^\bullet \NN_{Y'})$, we define their direct product in $\mathsf{GSV}$ to be the direct product of supervarieties $X'\times Y'$ with the natural splitting $\OO_{X'\times Y'}=\Lambda^\bullet(p_{X'_0}^*\NN_{X'}\oplus p_{Y'_0}^*\NN_{Y'})$. Then there is a canonical isomorphism in $\mathsf{GSV}$ $\operatorname{gr}(X\times Y)\cong\operatorname{gr}X\times\operatorname{gr}Y$ coming from the fact that taking tensor product commutes with taking associated graded for filtered vector spaces with finite filtrations. 

If $G$ is an algebraic supergroup, then using the canonical isomorphism $\operatorname{gr}(G\times G)\cong\operatorname{gr}G\times\operatorname{gr}G$ we have that $\operatorname{gr}G$ with the maps $\operatorname{gr}m_G, \operatorname{gr}e_G$ and $\operatorname{gr}s_G$ form an algebraic supergroup.  If $\g=\operatorname{Lie}G$ we write $\g^{\text{gr}}:=\operatorname{Lie}\operatorname{gr}G$.  Further, if $a:G\times X\to X$ is an action of a Lie supergroup on a locally graded supervariety $X$, then $\operatorname{gr}a:\operatorname{gr}(G\times X)\cong\operatorname{gr}G\times\operatorname{gr}X\to\operatorname{gr}X$ defines an action of $\operatorname{gr}G$ on $\operatorname{gr}X$.  

\begin{definition}
	If $G$ is a supergroup, we call $\operatorname{gr}G$ the group-graded supergroup gotten from $G$, and we say $G$ is a group-graded supergroup if $G\cong\operatorname{gr}G$ as supergroups.  If $a:G\times X\to X$ is an action of $G$ on a locally graded supervariety $X$, we call $\operatorname{gr}a$ the graded action of $\operatorname{gr}G$ on $\operatorname{gr}X$, and we say that $a$ is a graded action if it is isomorphic to $\operatorname{gr}a$ in the natural sense. 
\end{definition}

\begin{remark}
	As algebraic supergroups are smooth affine supervarieties, they are always graded.  The property of being group-graded is stronger in that it requires the multiplication and inversion morphisms to respect some grading.
\end{remark}
We give an explicit construction of $\operatorname{gr}G$.  Being affine the supergroup $G$ is graded, so fix a grading of $G$ so that its structure sheaf is equipped with a $\Z$-grading.  We call $G$ with this chosen grading $\operatorname{gr}G$, and we think of it as an object of $\mathsf{GSV}$.  This choice of grading determines a grading of $G\times G$, and thus we may write
\[
m_G^*=\bigoplus\limits_{i\geq 0}(m_G^*)_i, \ \ \ \ \ s_G^*=\bigoplus\limits_{i\geq 0}(s_G^*)_i
\]  
where $(m_G^*)_i$, respectively $(s_G^*)_i$ increase the $\Z$-grading of an element by exactly $i$.  We set $m_{\operatorname{gr}G}^*=(m_G^*)_0$, $s_{\operatorname{gr}G}^*=(s_{G}^*)_0$, and $e_{\operatorname{gr}G}^*=e_G^*$, and these are all algebra homomorphisms.  In this way, the induced maps on the supervariety $G$ given by $m_{\operatorname{gr}G}$, $s_{\operatorname{gr}G}$, and $e_{\operatorname{gr}G}$ become morphisms in $\mathsf{GSV}$ and define the structure of a supergroup on $\operatorname{gr}G$, and thus this supergroup is group-graded.    It follows in particular that we may identify $(\operatorname{gr}G)_0$ and $G_0$ as algebraic groups.

Now since we have constructed $\operatorname{gr}G$ so that it is the same supervariety as $G$ (the only difference being that it has a chosen $\Z$-grading on its structure sheaf), we have an identification $T_eG=T_e\operatorname{gr}G$.  Thus we may canonically identify $\g\cong \g^{\operatorname{gr}}$ as super vector spaces.  Given $u_e\in T_eG$, we write $u_L$ (resp.\ $u_R$) for the corresponding $G$ right-invariant (resp.\ $G$ left-invariant) vector field on $G$, and $\operatorname{gr}u_L$ (resp.\ $\operatorname{gr}u_R$) for the corresponding $\operatorname{gr}G$ right-invariant (resp.\ $\operatorname{gr}G$ left-invariant) vector field on $G$.  Using the $\Z$-grading on $\C[G]$ we may write $u_L=\sum\limits_{i\in\Z}(u_L)_i$ (resp.\ $u_R=\sum\limits_{i\in\Z}(u_R)_i$) as endomorphisms of $k[G]$, where $(u_L)_i$ (resp.\ $(u_R)_i$) changes the $\Z$-grading by $i$.

\begin{lemma}  Let $u_e\in T_e G$.  If $u_e$ is even then $\operatorname{gr}u_L=(u_L)_0$ and $\operatorname{gr}u_R=(u_R)_0$, and if $u_e$ is odd then $\operatorname{gr}u_L=(u_L)_{-1}$ and $\operatorname{gr}u_R=(u_R)_{-1}$.
\end{lemma}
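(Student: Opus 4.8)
The plan is to reduce the statement to a single structural fact about point derivations at the identity, and then track the two $\Z$-gradings through the formulas for $u_L$, $u_R$, and $m_{\operatorname{gr}G}^{*}$. The fact I would prove first is that a point derivation $u_e\in T_eG$ is ``concentrated'' in $\Z$-degree $0$ when it is even and in $\Z$-degree $1$ when it is odd; precisely, an even $u_e$ annihilates $\C[G]_i$ for all $i\geq1$, while an odd $u_e$ annihilates $\C[G]_i$ for all $i\neq1$. To see this, recall that $G$, being an algebraic supergroup, is smooth, so its conormal sheaf is locally free and, as a graded algebra, $\OO_{G,e}$ is a free exterior algebra over $\OO_{G_0,e}$ on finitely many generators sitting in degree $1$. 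Since $e\in G(\C)$ factors through the even part $G_0$, the evaluation $e^{*}$ kills $\JJ_G=\C[G]_{\geq1}$, so those degree-$1$ generators lie in $\m_e$; hence $\C[G]_{\geq1}\sub\m_e$ and the degree-$\geq2$ part of $\OO_{G,e}$ lies in $\m_e^{2}$. A point derivation vanishes on constants and on $\m_e^{2}$, so $u_e$ vanishes on $\C[G]_{\geq2}$; and for parity reasons an even $u_e$ also vanishes on $\bigoplus_{i\text{ odd}}\C[G]_i$ while an odd $u_e$ vanishes on $\bigoplus_{i\text{ even}}\C[G]_i$. Combining these observations gives the claim.

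Next I would record the relevant definitions relative to the grading. For homogeneous $f\in\C[G]$ of degree $d$, write $m_G^{*}(f)=\sum_{i\geq0}(m_G^{*})_i(f)$ with $(m_G^{*})_i(f)\in\bigoplus_{a+b=d+i}\C[G]_a\otimes\C[G]_b$, so that $m_{\operatorname{gr}G}^{*}=(m_G^{*})_0$ by construction. Under the canonical identification $T_eG=T_e\operatorname{gr}G$ we then have $u_L(f)=-(u_e\otimes1)(m_G^{*}(f))$ and $\operatorname{gr}u_L(f)=-(u_e\otimes1)((m_G^{*})_0(f))$, and symmetrically $u_R(f)=(1\otimes u_e)(m_G^{*}(f))$ and $\operatorname{gr}u_R(f)=(1\otimes u_e)((m_G^{*})_0(f))$.

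The conclusion is then a short computation. Suppose $u_e$ is even. By the first step, $u_e\otimes1$ annihilates $\C[G]_a\otimes\C[G]_b$ unless $a=0$, so $(u_e\otimes1)((m_G^{*})_i(f))\in\C[G]_{d+i}$ for each $i\geq0$; since $u_L(f)=-\sum_{i\geq0}(u_e\otimes1)((m_G^{*})_i(f))$ and these terms lie in pairwise distinct degrees, this says precisely that $(u_L)_i(f)=-(u_e\otimes1)((m_G^{*})_i(f))$, and taking $i=0$ yields $(u_L)_0(f)=-(u_e\otimes1)((m_G^{*})_0(f))=\operatorname{gr}u_L(f)$. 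The identical argument with the roles of the two tensor factors exchanged gives $(u_R)_0=\operatorname{gr}u_R$. Suppose instead $u_e$ is odd. Now $u_e\otimes1$ annihilates $\C[G]_a\otimes\C[G]_b$ unless $a=1$, so $(u_e\otimes1)((m_G^{*})_i(f))\in\C[G]_{d+i-1}$, and the same reasoning gives $(u_L)_{i-1}(f)=-(u_e\otimes1)((m_G^{*})_i(f))$; taking $i=0$ yields $(u_L)_{-1}(f)=-(u_e\otimes1)((m_G^{*})_0(f))=\operatorname{gr}u_L(f)$, and the analogous computation with $1\otimes u_e$ gives $(u_R)_{-1}=\operatorname{gr}u_R$. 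The only genuinely non-formal input is the structural fact of the first paragraph, and within it the inclusion $\C[G]_{\geq2}\sub\m_e^{2}$: this is exactly where smoothness of $G$ at $e$ enters, ensuring that the positive-degree part of $\C[G]$ is generated near $e$ by its degree-$1$ part. With that in hand the rest is pure bookkeeping with the two $\Z$-gradings, so I expect no further obstacle.
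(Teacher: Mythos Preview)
Your proof is correct and follows essentially the same route as the paper's own argument: decompose $u_L=-(u_e\otimes1)\circ m_G^{*}$ against the decomposition $m_G^{*}=\sum_i(m_G^{*})_i$, and use that $u_e$ kills $\C[G]_j$ for $j>0$ (even case) or $j\neq1$ (odd case) to read off which graded piece $-(u_e\otimes1)\circ(m_G^{*})_i$ lands in. The only difference is that the paper simply asserts the vanishing facts about $u_e$, whereas you supply the justification via $\C[G]_{\geq1}\sub\m_e$ and $\C[G]_{\geq2}\sub\m_e^{2}$; this is a welcome addition rather than a divergence in strategy.
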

\begin{proof}
	We prove this for right-invariant vectors, with the case of left-invariant vector fields being similar. have
	\[
	u_L=-(u_e\otimes 1)\circ(m_G^*)=\bigoplus\limits_{i\geq 0} -(u_e\otimes 1)\circ(m_G^*)_{i}.
	\]
	For $f\in\C[G]_k$, $(m_G^*)_i(f)\in\bigoplus\limits_{j}\C[G]_{j}\otimes \C[G]_{k+i-j}$.  If $u_e$ is even, then $u_e$ vanishes on $\C[G]_{i}$ for $i>0$, so 
	\[
	-(u_e\otimes 1)\circ(m_G^*)_{i}=(u_L)_{i},
	\]
	so $\operatorname{gr}u_L=(u_L)_0$.  If $u_e$ is odd, then $u_e$ vanishes on $\C[G]_i$ for $i\neq 1$, so
	\[
	-(u_e\otimes 1)\circ(m_G^*)_{i}=(u_L)_{i-1},
	\]
	so $\operatorname{gr}u_L=(u_L)_{-1}$.
\end{proof}

\begin{cor}\label{graded_group_abel_char}
	We have $[\g^{\operatorname{gr}}_{\ol{1}},\g^{\operatorname{gr}}_{\ol{1}}]=0$.  In fact a supergroup $G$ is group-graded if and only if $[\g_{\ol{1}},\g_{\ol{1}}]=0$, where $\g=\operatorname{Lie}G$.
\end{cor}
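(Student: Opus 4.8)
The plan is to read off both statements from the preceding lemma, which says that on $\operatorname{gr}G$ the right-invariant vector field $\operatorname{gr}u_L$ attached to $u_e\in T_eG$ is a $\Z$-homogeneous operator on $\C[\operatorname{gr}G]$ (which is $\C[G]$ with its chosen $\Z$-grading) of degree $0$ when $u_e$ is even and of degree $-1$ when $u_e$ is odd. For the first assertion I would observe that $\g^{\operatorname{gr}}=\operatorname{Lie}\operatorname{gr}G$ is a sub-Lie-superalgebra of $\Der\C[\operatorname{gr}G]$ whose bracket is the super-commutator of operators, and that the super-commutator of two $\Z$-homogeneous operators is again $\Z$-homogeneous, of degree the sum. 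By the lemma $\g^{\operatorname{gr}}_{\ol{0}}$ sits in degree $0$ and $\g^{\operatorname{gr}}_{\ol{1}}$ in degree $-1$, so $\g^{\operatorname{gr}}$ is a $\Z$-graded Lie superalgebra concentrated in degrees $0$ and $-1$. Given $v,v'\in\g^{\operatorname{gr}}_{\ol{1}}$, the bracket $[v,v']$ is an even element of $\g^{\operatorname{gr}}$, hence a degree-$0$ operator, while it is simultaneously the super-commutator of two degree-$(-1)$ operators, hence of degree $-2$; a nonzero operator cannot be homogeneous of two distinct degrees, so $[v,v']=0$ and $[\g^{\operatorname{gr}}_{\ol{1}},\g^{\operatorname{gr}}_{\ol{1}}]=0$. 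The forward implication of the ``in fact'' then follows at once: if $G$ is group-graded then $G\cong\operatorname{gr}G$ as supergroups, hence $\g\cong\g^{\operatorname{gr}}$ as Lie superalgebras and $[\g_{\ol{1}},\g_{\ol{1}}]=0$.

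For the converse I would assume $[\g_{\ol{1}},\g_{\ol{1}}]=0$ and first pin down $\g^{\operatorname{gr}}$ explicitly. Using the lemma together with the fact that the bracket of right-invariant vector fields on $G$ is the one attached to the $\g$-bracket, and comparing $\Z$-degree components of operators on $\C[G]$, one finds that under the canonical vector-space identification $\g\cong\g^{\operatorname{gr}}$ the brackets $\g_{\ol{0}}\otimes\g_{\ol{0}}\to\g_{\ol{0}}$ and $\g_{\ol{0}}\otimes\g_{\ol{1}}\to\g_{\ol{1}}$ are unchanged, while $\g_{\ol{1}}\otimes\g_{\ol{1}}\to\g_{\ol{0}}$ is replaced by $0$; under our hypothesis this makes $\g\cong\g^{\operatorname{gr}}$ an isomorphism of Lie superalgebras. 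One also checks that the two adjoint $G_0$-actions agree: on $\g_{\ol{0}}=\operatorname{Lie}G_0$ both are the ordinary adjoint action of $G_0$, and on $\g_{\ol{1}}$ both are induced by the $G_0$-action on the conormal module $\JJ_G/\JJ_G^2$, which depends only on $m_G^*$ modulo $\JJ_{G\times G}^2$, i.e.\ only on $(m_G^*)_0=m_{\operatorname{gr}G}^*$. Hence $(G_0,\g,\Ad)$ and $((\operatorname{gr}G)_0,\g^{\operatorname{gr}},\Ad)$ are isomorphic super Harish-Chandra pairs, and by the equivalence between algebraic supergroups and super Harish-Chandra pairs (in the spirit of \cref{HC_pair_action}; see also \cite{carmeli2011mathematical}) we conclude $G\cong\operatorname{gr}G$, so $G$ is group-graded.

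The main obstacle is the converse. It requires a somewhat fiddly comparison of the $\Z$-graded components of $m_G^*$ to identify both the Lie-superalgebra and the $G_0$-module structure of $\g^{\operatorname{gr}}$, and then an appeal to the equivalence of categories between algebraic supergroups and Harish-Chandra pairs in order to promote the isomorphism of pairs to an isomorphism of supergroups. The first assertion, and with it the forward implication, is by contrast an immediate degree count once the preceding lemma is in hand.
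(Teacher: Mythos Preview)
Your argument for the first assertion is essentially the paper's: both observe that the super-commutator of two degree-$(-1)$ derivations has degree $-2$, and conclude vanishing. The paper phrases it as ``there are no vector fields of degree $-2$ on a graded supervariety,'' while you phrase it as a clash between degree $0$ (forced by evenness) and degree $-2$; these are the same observation.

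For the second assertion the paper does not give a proof at all: it simply cites Proposition~4.4 of \cite{vishnyakova2011complex}. Your Harish-Chandra pairs argument is a legitimate and self-contained substitute, and your identification of the bracket on $\g^{\operatorname{gr}}$ is correct: by the preceding lemma, the degree-$0$ component of $[u_L,u'_L]$ for even $u,u'$ is exactly $[(u_L)_0,(u'_L)_0]$, the degree-$(-1)$ component of $[u_L,v_L]$ for even $u$ and odd $v$ is exactly $[(u_L)_0,(v_L)_{-1}]$, and the odd-odd bracket in $\g^{\operatorname{gr}}$ vanishes by part one. So the vector-space identification $\g\cong\g^{\operatorname{gr}}$ is a Lie-superalgebra isomorphism precisely when $[\g_{\ol{1}},\g_{\ol{1}}]=0$. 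Your appeal to the supergroup/Harish-Chandra pair equivalence then finishes the job. One small point: \cref{HC_pair_action} as stated in the paper concerns actions, not the equivalence of categories you need; the latter is a separate (standard) result, so cite it directly from \cite{carmeli2011mathematical} or elsewhere rather than invoking \cref{HC_pair_action}. Note also that the paper's \emph{next} lemma establishes, via Koszul's $G_0\times G_0$-equivariant grading, that $\g_{\ol{0}}=\g^{\operatorname{gr}}_{\ol{0}}$ and that $\g_{\ol{1}}\cong\g^{\operatorname{gr}}_{\ol{1}}$ as $\g_{\ol{0}}$-modules; you could lean on that for the comparison of adjoint actions rather than the slightly informal conormal-sheaf remark.
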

\begin{proof}
	For the first statement, the supercommutator of two degree (-1)-maps is of degree (-2) with respect to the $\Z$-grading.  However there are no vector fields of degree (-2) on a graded supervariety, thus the supercommutator must be zero.  A proof of the second statement is given in proposition 4.4 of \cite{vishnyakova2011complex}.
\end{proof}
\begin{definition}
	A Lie superalgebra $\g$ is graded if $[\g_{\ol{1}},\g_{\ol{1}}]=0$.
\end{definition}
Now $G_0\times G_0$ acts on $G$ by left and right translation.  Using Koszul's realization of $\C[G]$ as a coinduced algebra on $\C[G_0]$ (see \cite{koszul1982graded}), which gives a natural grading of $G$, we obtain a natural $G_0\times G_0$-equivariant grading (this does not require that $G_0$ is reductive; if $G_0$ is reductive we could also use \cref{equiv_gradings} to find a $G_0\times G_0$-equivariant grading).  Thus if we constructed $\operatorname{gr}G$ as above, then using the $G_0\times G_0$-equivariant grading we would have that if $u_e$ is even, $u_L=(u_L)_0$ and $u_R=(u_R)_0$ since they will preserve the $\Z$-grading.  Thus we have shown:

\begin{lemma}
	If we construct $\operatorname{gr}G$ by use of a $G_0\times G_0$-equivariant grading of $G$, then for an even tangent vector $u_e\in T_eG$, $u_L=\operatorname{gr}u_L$ and $u_R=\operatorname{gr}u_R$.  In particular $\g_{\ol{0}}=\g^{\operatorname{gr}}_{\ol{0}}$ as Lie algebras of vector fields on $G$.  Further, the natural isomorphism of super vector spaces $\g_{\ol{1}}\cong\g^{\operatorname{gr}}_{\ol{1}}$ induced from this grading is an isomorphism of $\g_{\ol{0}}$-modules.
\end{lemma}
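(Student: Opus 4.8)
The plan is to reduce everything to the previous lemma together with the defining property of a $G_0\times G_0$-equivariant grading. First I would unwind that property: to say that the chosen grading of $G$ used to build $\operatorname{gr}G$ is $G_0\times G_0$-equivariant means exactly that the action of $G_0$ on $\C[G]$ by left translation and the action by right translation both preserve the $\Z$-grading. Differentiating the left-translation action of $G_0$ at the identity yields precisely the right-invariant vector fields $u_L$ with $u_e\in T_eG_0=\g_{\ol{0}}$; since the infinitesimal generators of a grading-preserving group action are themselves grading-preserving (degree $0$) operators, this gives $u_L=(u_L)_0$ for every even $u_e$. Comparing with the previous lemma, which states $\operatorname{gr}u_L=(u_L)_0$ for even $u_e$, we conclude $u_L=\operatorname{gr}u_L$; the identical argument applied to right translations gives $u_R=\operatorname{gr}u_R$.

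Next, for the identification of even parts: $\g_{\ol{0}}$ is spanned over $\C$ by the vector fields $u_L$ with $u_e$ even and $\g^{\operatorname{gr}}_{\ol{0}}$ by the corresponding $\operatorname{gr}u_L$, and by the first step these are literally the same operators on $\C[G]$; since the Lie bracket on each side is the commutator of vector fields, $\g_{\ol{0}}=\g^{\operatorname{gr}}_{\ol{0}}$ as Lie algebras of vector fields on $G$.

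For the final assertion I would compute the adjoint action explicitly. The natural identification $\g_{\ol{1}}\cong\g^{\operatorname{gr}}_{\ol{1}}$ sends a right-invariant field $v_L$ (with $v_e$ odd) to $\operatorname{gr}v_L$, which by the previous lemma equals the degree $(-1)$ component $(v_L)_{-1}$. The $\g_{\ol{0}}$-module structure on $\g_{\ol{1}}$ is given by the commutator bracket of the corresponding vector fields, and likewise on the $\operatorname{gr}$ side, so it is enough to show that for $u_e$ even and $v_e$ odd the right-invariant field $w_L:=[u_L,v_L]$ satisfies $\operatorname{gr}w_L=[\operatorname{gr}u_L,\operatorname{gr}v_L]$. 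Extracting degree $(-1)$ components and using $u_L=(u_L)_0$ from the first step,
\[
\operatorname{gr}w_L=(w_L)_{-1}=\big([u_L,v_L]\big)_{-1}=[(u_L)_0,(v_L)_{-1}]=[\operatorname{gr}u_L,\operatorname{gr}v_L],
\]
which is exactly the adjoint action computed in $\g^{\operatorname{gr}}$. Hence the identification intertwines the $\g_{\ol{0}}$-actions and is an isomorphism of $\g_{\ol{0}}$-modules.

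The only real subtlety is the degree bookkeeping in the last step: one must note that $\operatorname{gr}$ applied to the right-invariant field $w_L$ genuinely is the degree $(-1)$ part of $[u_L,v_L]$ — this is the previous lemma applied to $w_L$, which is legitimate precisely because $\operatorname{gr}G$ was constructed from the $G_0\times G_0$-equivariant grading — and that, $u_L$ already being homogeneous of degree $0$, no component $(v_L)_i$ with $i\neq -1$ can contribute to the degree $(-1)$ part of the commutator.
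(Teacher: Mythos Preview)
Your proof is correct and follows essentially the same approach as the paper. The paper establishes $u_L=(u_L)_0$ and $u_R=(u_R)_0$ from $G_0\times G_0$-equivariance in the discussion preceding the lemma, and then for the $\g_{\ol{0}}$-module statement observes that $[u,v]_i=[u,v_i]$ when $u$ has degree $0$; your argument unpacks the same computation in slightly more detail.
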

\begin{proof}
	It remains to show the second statement.  For this, we observe that for $u\in\g_{\ol{0}}$, $v\in\g_{\ol{1}}$, $[u,v]_i=[u,v_i]$.  Since $\operatorname{gr}v=v_{-1}$, the statement follows.
\end{proof}

We now move on to the study of graded actions.  

\textbf{Assumption}: For the rest of the section we assume that all supervarieties are locally graded.

\begin{lemma}
	Suppose $G$ is a supergroup which acts on a supervariety $X$, and consider the action of $\operatorname{gr}G$ on $\operatorname{gr}X$.  Then for $u\in\g^{\operatorname{gr}}_{\ol{0}}$, $u$ preserves the $\Z$-grading on $\OO_{\operatorname{gr}X}$, and for $u\in\g^{\operatorname{gr}}_{\ol{1}}$, $u$ acts by degree $-1$ on $\OO_{\operatorname{gr}X}$.
\end{lemma}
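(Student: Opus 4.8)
The plan is to use the defining property of $\operatorname{gr}a\colon\operatorname{gr}G\times\operatorname{gr}X\to\operatorname{gr}X$: by construction it is a morphism in $\mathsf{GSV}$, where $\operatorname{gr}(G\times X)$ is identified with $\operatorname{gr}G\times\operatorname{gr}X$ carrying the total $\Z$-grading (the sum of the gradings of the two factors, as in the definition of the product in $\mathsf{GSV}$). Hence its comorphism $(\operatorname{gr}a)^*$ is homogeneous of degree $0$ for these gradings. Concretely, fix affine opens $U\subseteq\operatorname{gr}X$ and $U'\subseteq\operatorname{gr}G$ with $e\in U'$ and $\operatorname{gr}a(U'\times U)\subseteq U$, so that $(\operatorname{gr}a)^*\colon\OO_{\operatorname{gr}X}(U)\to\OO_{\operatorname{gr}G}(U')\otimes\OO_{\operatorname{gr}X}(U)$; then for $f$ homogeneous of degree $k$ we get $(\operatorname{gr}a)^*(f)=\sum_{i+j=k}g_{ij}$ with $g_{ij}\in\OO_{\operatorname{gr}G}(U')_i\otimes\OO_{\operatorname{gr}X}(U)_j$.

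Since $\operatorname{gr}a$ is an action of the supergroup $\operatorname{gr}G$, the induced action of $u\in\g^{\operatorname{gr}}\cong T_e(\operatorname{gr}G)$ on $f$ is $u(f)=-(u_e\otimes 1)\bigl((\operatorname{gr}a)^*(f)\bigr)$, where $u_e\colon\OO_{\operatorname{gr}G,e}\to\C$ is the corresponding point derivation. The one local input needed — exactly the observation already made in the lemma comparing $u_L$ with $\operatorname{gr}u_L$ — is that in $\OO_{\operatorname{gr}G,e}$ the homogeneous components of degree $\geq 2$ lie in $\m_e^2$ (locally they are generated by products of odd, hence degree-$1$, elements of $\m_e$), so a point derivation at $e$ is determined by its values on the degree-$0$ and degree-$1$ parts, while $\m_e/\m_e^2$ is concentrated in those two degrees with its degree-$0$ part even and its degree-$1$ part odd. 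Consequently an even $u_e$ kills every homogeneous element of positive degree, and an odd $u_e$ kills every homogeneous element of degree $\neq 1$; equivalently $u_e\otimes 1$ is a homogeneous map of degree $0$ when $u_e$ is even and of degree $-1$ when $u_e$ is odd.

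Putting these together: if $u\in\g^{\operatorname{gr}}_{\ol{0}}$ then $(u_e\otimes 1)(g_{ij})=0$ unless $i=0$, so $u(f)=-(u_e\otimes 1)(g_{0k})\in\OO_{\operatorname{gr}X}(U)_k$ and $u$ preserves the $\Z$-grading; if $u\in\g^{\operatorname{gr}}_{\ol{1}}$ then $(u_e\otimes 1)(g_{ij})=0$ unless $i=1$, so $u(f)=-(u_e\otimes 1)(g_{1,k-1})\in\OO_{\operatorname{gr}X}(U)_{k-1}$ and $u$ acts by degree $-1$ (in particular killing degree-$0$ sections). Since the grading is a subsheaf structure and the derivations are compatible with restriction, running this over an affine open cover yields the statement on all of $\OO_{\operatorname{gr}X}$.

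The main obstacle I anticipate is purely organizational: keeping straight the identification $\operatorname{gr}(G\times X)\cong\operatorname{gr}G\times\operatorname{gr}X$ together with the convention that the product in $\mathsf{GSV}$ carries the sum grading (so that $(\operatorname{gr}a)^*$ really is homogeneous of degree $0$), and pinning down that even versus odd tangent vectors at $e$ correspond precisely to functionals supported in degree $0$ versus degree $1$. Once those two points are in place the argument is formal and runs in complete parallel with the proof of the earlier lemma on left- and right-invariant vector fields.
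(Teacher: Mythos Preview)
Your proposal is correct and follows essentially the same approach as the paper: both use that $(\operatorname{gr}a)^*$ preserves the total $\Z$-grading, decompose $(\operatorname{gr}a)^*(f)$ into bihomogeneous pieces, and then invoke that an even (resp.\ odd) point derivation $u_e$ at $e$ vanishes on $(\OO_{\operatorname{gr}G})_i$ for $i>0$ (resp.\ $i\neq 1$). Your version is somewhat more explicit about the local setup and the identification of the product grading, but the argument is the same.
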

\begin{proof}
	For $f\in(\OO_{\operatorname{gr}X})_i$, we have
	\[
	u(f)=-(u_e\otimes 1)\circ(\operatorname{gr}a)^*(f).
	\]
	Now since $\operatorname{gr}a$ preserves the $\Z$-grading, we have $(\operatorname{gr}a)^*(f)\in\bigoplus\limits_{0\leq j\leq i}(\OO_{\operatorname{gr}G})_{j}\otimes(\OO_{\operatorname{gr}X})_{i-j}$.  If $u\in\g^{\text{gr}}_{\ol{0}}$, then $u_e$ vanishes on $(\OO_{G})_i$ for $i>0$, and if $u\in\g^{\text{gr}}_{\ol{1}}$ then $u_e$ vanishes on $(\OO_{G})_i$ for $i\neq 1$.  The result follows.
\end{proof}

Now if $K$ is a closed subgroup of $G$ via the inclusion $\phi:K\to G$, then the $\Z$-gradings induced on $\C[G]$ and $\C[K]$ from Koszul's realization make the natural pullback surjection $\phi^*:\C[G]\to \C[K]$ into a graded map. Thus the kernel of this map, $I_K\sub \C[G]$, becomes a graded ideal.  Further, if we consider the group-graded supergroup structure on $K$ and $G$ from these gradings, $\phi$ will be a homomorphism of supergroups $\operatorname{gr}K\to\operatorname{gr}G$.  Thus $\operatorname{gr}\phi=\phi$, and so $I_K=I_{\operatorname{gr}K}$.

\begin{lemma}
	If $X$ is a supervariety and $x\in X(\C)$, then $\operatorname{Stab}_{\operatorname{gr}G}(x)=\operatorname{Stab}_G(x)$ as closed subvarieties of $G$.
\end{lemma}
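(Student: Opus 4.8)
The plan is to show that $\Stab_{\operatorname{gr}G}(x)$ and $\Stab_G(x)$ agree as closed subschemes of the single supervariety $G=\operatorname{gr}G$, using the identifications $(\operatorname{gr}G)_0=G_0$ and $X(\C)=\operatorname{gr}X(\C)$. First I would check that orbit maps are compatible with $\operatorname{gr}$: since $\operatorname{gr}$ is a functor commuting with direct products and $\operatorname{gr}(i_x)=i_x$ (as $\Spec\C$ is purely even), one gets $(\operatorname{gr}a)_x=\operatorname{gr}(a_x)$, hence $\Stab_{\operatorname{gr}G}(x)=\operatorname{gr}(a_x)^{-1}(x)$ while $\Stab_G(x)=a_x^{-1}(x)$. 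Since the degree-$0$ part of $\operatorname{gr}a$ is $a_0$, both stabilizers have the same (reduced) even part $K_0:=\Stab_{G_0}(x)$.

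Next I would establish the containment $\Stab_G(x)\subseteq\Stab_{\operatorname{gr}G}(x)$. Work in an affine open $\Spec B\ni x$ with $B$ graded (possible by the standing local-gradedness hypothesis); then the maximal ideal $\m_x\subset B$ is homogeneous and generated by homogeneous elements of degrees $0$ and $1$ (degree-$0$ generators lifting a generating set of the maximal ideal of $x$ in $\C[X_0]$, degree-$1$ generators generating $\JJ_X$ locally). From the explicit formula for $(\operatorname{gr}\psi)^*$ together with the identity $\JJ_G^{\,i}=\bigoplus_{k\ge i}\C[G]_k$ one reads off that for homogeneous $b\in B$ of degree $d$, the element $(\operatorname{gr}a_x)^*(b)$ is precisely the degree-$d$ component of $a_x^*(b)$. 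Now $\Stab_G(x)$ is a closed subgroup of $G$, so by the discussion preceding this lemma its defining ideal in $\C[G]$ is homogeneous; it therefore contains every homogeneous component of $a_x^*(b)$ for $b\in\m_x$, in particular all the generators $(\operatorname{gr}a_x)^*(b)$ of the ideal cutting out $\Stab_{\operatorname{gr}G}(x)$. This yields the containment together with a surjection $\C[\Stab_{\operatorname{gr}G}(x)]\twoheadrightarrow\C[\Stab_G(x)]$ of graded $\C[K_0]$-algebras which is the identity in degree $0$.

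To promote this surjection to an isomorphism I would compare odd dimensions. Both stabilizers are supergroups, hence smooth, hence graded, and Koszul's realization gives $\C[\Stab_G(x)]\cong\C[K_0]\otimes\Lambda^\bullet\k^*$ and $\C[\Stab_{\operatorname{gr}G}(x)]\cong\C[K_0]\otimes\Lambda^\bullet(\k')^*$, with the exterior grading matching the grading induced from $G$, where $\k:=(\operatorname{Lie}\Stab_G(x))_{\ol1}$ and $\k':=(\operatorname{Lie}\Stab_{\operatorname{gr}G}(x))_{\ol1}$ are viewed inside $\g_{\ol1}=\g^{\operatorname{gr}}_{\ol1}$. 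By \cref{orbit_diff} and the remark following it, $\k$ and $\k'$ are the odd parts of $\ker(da_x)_e$ and $\ker(d\operatorname{gr}(a_x))_e$, so it suffices to show the two differentials coincide. Under the canonical identifications $T_eG=T_e\operatorname{gr}G$ and $T_xX\cong T_x\operatorname{gr}X$, the cotangent space at $x$ is concentrated in $\Z$-degrees $0$ and $1$, where the $\Z$-grading agrees with the parity grading; since $a_x$ is a morphism its cotangent map is parity-preserving and hence $\Z$-graded here. Its degree-$0$ part is the cotangent map of the $G_0$-orbit map $G_0\to X_0$, while its degree-$1$ part sends an odd conormal class $\ol h$ to the class of the degree-$1$ component $a_x^*(h)_1$; the cotangent map of $\operatorname{gr}(a_x)$ has the identical description, its degree-$0$ part via $(\operatorname{gr}a)_0=a_0$ and its degree-$1$ part via $(\operatorname{gr}a_x)^*(h)=a_x^*(h)_1$. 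Hence $\k=\k'$, and then the graded $\C[K_0]$-linear surjection above is, in each degree $i$, a surjection of free $\C[K_0]$-modules of equal finite rank $\binom{\dim\k}{i}$, so an isomorphism; therefore $\Stab_{\operatorname{gr}G}(x)=\Stab_G(x)$.

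The crux is this final upgrade: the inclusion $\Stab_G(x)\subseteq\Stab_{\operatorname{gr}G}(x)$ is formal, but the reverse inclusion is not visible at the level of defining ideals, and it is the coincidence of the differentials — together with the freeness coming from smoothness of supergroups — that prevents $\Stab_{\operatorname{gr}G}(x)$ from being strictly bigger.
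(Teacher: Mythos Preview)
Your proof is correct, and it takes a genuinely different (and more explicit) route than the paper's own argument.

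The paper's proof is a three-line sketch: it invokes that $I_K$ is graded in the Koszul realization (from the paragraph before the lemma) and then asserts directly that $(\operatorname{gr}a_x)^*(\II_x^{\operatorname{gr}})=I_K$. The inclusion $(\operatorname{gr}a_x)^*(\II_x^{\operatorname{gr}})\subseteq I_K$ is exactly the step you carry out --- each $(\operatorname{gr}a_x)^*(b)$ is a homogeneous component of $a_x^*(b)\in I_K$, and $I_K$ is graded --- but the paper does not spell out the reverse inclusion. In effect, the paper is asserting that the associated-graded functor commutes with taking the scheme-theoretic fiber over a closed point, which is not a formal consequence of $I_K$ being graded alone.

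Your approach supplies precisely what is missing: having established $\Stab_G(x)\subseteq\Stab_{\operatorname{gr}G}(x)$ and that both sides share the same even part $K_0$, you compare Lie algebras. The key observation --- that $(da_x)_e=(d\operatorname{gr}a_x)_e$ under the canonical identifications, because the cotangent spaces sit in $\Z$-degrees $0$ and $1$ where the $\Z$-grading agrees with the parity grading and $a_x^*$ is parity-preserving --- is exactly right, and it forces $\k_{\ol 1}=\k'_{\ol 1}$. Then the smoothness of subgroups (via Koszul's realization, giving freeness of each graded piece over $\C[K_0]$) turns the graded surjection of coordinate rings into an isomorphism degree by degree.

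So what your argument buys is a complete justification of the hard inclusion, at the cost of invoking differentials and smoothness; what the paper's argument buys is brevity, at the cost of leaving that inclusion unexplained. Your route is sound and arguably cleaner than trying to chase the ideal equality directly.
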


\begin{proof}
	Write $K=\operatorname{Stab}_G(x)$, $\II_x$ for the maximal ideal sheaf of $x\in X(\C)$ and $\II^{\operatorname{gr}}_x$ for the maximal ideal sheaf of $x\in\operatorname{gr}X(\C)$.  Then by assumption we have $(a_x)^*(\II_x)=I_K$.  But with respect to the $\Z$-grading from Koszul's realization, $I_K$ is a graded ideal and thus $(\operatorname{gr}a_x)^*(\II_{x}^{\operatorname{gr}})=I_K=I_{\operatorname{gr}K}$, and we are done.
\end{proof}

\begin{cor}\label{graded_homog}
	If $X$ is a homogeneous $G$-supervariety isomorphic to $G/K$, then $\operatorname{gr}X$ is a homogeneous $\operatorname{gr}G$-supervariety isomorphic to $\operatorname{gr}G/\operatorname{gr}K$.
\end{cor}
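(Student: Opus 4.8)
First I would fix a $G$-equivariant isomorphism $X\cong G/K$ and take $x\in X(\C)$ to be the image of $eK$, so that $\Stab_G(x)=K$; since $X$ is homogeneous, the orbit map $a_x\colon G\to X$ is a submersion. Regarding $x$ as a closed point of $\operatorname{gr}X$ (legitimate, as $|\operatorname{gr}X|=|X|$), functoriality of $\operatorname{gr}$ together with the identification $\operatorname{gr}(G\times X)\cong\operatorname{gr}G\times\operatorname{gr}X$ shows that the orbit map of the $\operatorname{gr}G$-action $\operatorname{gr}a$ at $x$ is exactly $\operatorname{gr}(a_x)\colon\operatorname{gr}G\to\operatorname{gr}X$. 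By the preceding lemma $\Stab_{\operatorname{gr}G}(x)=\Stab_G(x)=K$, and, as noted just before that lemma, this closed subgroup is cut out of $\operatorname{gr}G$ by the same graded ideal as $\operatorname{gr}K$; hence $\Stab_{\operatorname{gr}G}(x)=\operatorname{gr}K$. There is therefore always a $\operatorname{gr}G$-equivariant immersion $\operatorname{gr}G/\operatorname{gr}K\to\operatorname{gr}X$ through which $\operatorname{gr}(a_x)$ factors, and it suffices to prove that $\operatorname{gr}(a_x)$ is a submersion: this immersion then becomes an open immersion between supervarieties with the same underlying space $|G_0/K_0|=|X|=|\operatorname{gr}X|$, hence an isomorphism $\operatorname{gr}G/\operatorname{gr}K\xrightarrow{\sim}\operatorname{gr}X$ of $\operatorname{gr}G$-supervarieties, and in particular $\operatorname{gr}X$ is homogeneous.

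To show $\operatorname{gr}(a_x)$ is a submersion, I would invoke \cref{orbit_diff} and the remark that an action is submersive at a point precisely when the evaluation map onto the tangent space is surjective: it is then enough that the differential $(d\operatorname{gr}(a_x))_e\colon T_e\operatorname{gr}G\to T_x\operatorname{gr}X$ be surjective. The point is that $\operatorname{gr}$ changes neither the underlying space, nor the even subvariety ($(\operatorname{gr}Y)_0=Y_0$), nor the conormal sheaf ($\NN_{\operatorname{gr}Y}=\NN_Y$, by local gradedness). Since the tangent space at a closed point $y$ of a supervariety $Y$ splits canonically as $(\m_y/\m_y^2)^*\cong T_yY_0\oplus(\NN_Y\otimes\kappa(y))^*$, this yields canonical identifications $T_e\operatorname{gr}G\cong T_eG$ and $T_x\operatorname{gr}X\cong T_xX$; moreover, for any morphism $\psi$ the maps induced by $\operatorname{gr}\psi$ on even subvarieties and on conormal sheaves agree with those induced by $\psi$ itself — this is immediate from the description of $(\operatorname{gr}\psi)^*$ on the degree-$0$ and degree-$1$ graded pieces — so these identifications carry $(d\operatorname{gr}(a_x))_e$ onto $(da_x)_e$. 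As $a_x$ is a submersion, $(da_x)_e$ is surjective, hence so is $(d\operatorname{gr}(a_x))_e$.

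The only step that is more than bookkeeping is the naturality assertion used above: that the splitting $(\m_y/\m_y^2)^*\cong T_yY_0\oplus(\NN_Y\otimes\kappa(y))^*$ is functorial in $Y$ and compatible with passing from $(Y,\psi)$ to $(\operatorname{gr}Y,\operatorname{gr}\psi)$. Establishing it comes down to unwinding $(\operatorname{gr}\psi)^*$ on $\OO_{Y_0}=\JJ_Y^0/\JJ_Y^1$ and on $\NN_Y=\JJ_Y^1/\JJ_Y^2$ and matching these with the reductions of $\psi^*$ modulo $\JJ$ and modulo $\JJ^2$; the local gradedness hypothesis, which gives $\Lambda^i\NN_Y\cong\JJ_Y^i/\JJ_Y^{i+1}$, is exactly what ensures $\m_y/\m_y^2$ picks up no contribution beyond these two pieces. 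Everything else — the factorization of the orbit map through the immersion, the identification of the resulting open immersion with an isomorphism, and the reduction to a single differential — is formal given the established functoriality of $\operatorname{gr}$, the stabilizer lemma, and \cref{orbit_diff}.
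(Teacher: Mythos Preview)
Your proof is correct and follows exactly the route the paper intends: the corollary is stated in the paper without proof, as an immediate consequence of the preceding stabilizer lemma and the identification $I_K=I_{\operatorname{gr}K}$. Your argument supplies the details the paper omits --- in particular the verification that $\operatorname{gr}(a_x)$ is a submersion via the canonical identification of tangent spaces $T_e\operatorname{gr}G\cong T_eG$ and $T_x\operatorname{gr}X\cong T_xX$ --- and these details are handled correctly.
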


\subsection{$G$ a quasireductive group-graded supergroup}

Let $G$ be a quasireductive supergroup, and write $\g=\operatorname{Lie}G$ as always.  

\begin{lemma}\label{abelian_ideal}
	If $\l\sub\g_{\ol{1}}$ is an abelian ideal of $\g$, then $\l$ is contained in every hyperborel subalgebra of $\g$.
\end{lemma}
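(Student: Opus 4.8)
The plan is to fix an arbitrary hyperborel $\b$ of $\g$ and show that $\b':=\b+\l$ again satisfies the first two conditions of \cref{defn_borel}; the maximality clause in that definition then forces $\b'=\b$, i.e.\ $\l\sub\b$.

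First I would check that $\b'$ is a ($\Z/2$-graded) subalgebra. Since $\l$ is an ideal of $\g$ we have $[\b,\l]\sub\l$, and since $\l$ is abelian $[\l,\l]=0$; together with $[\b,\b]\sub\b$ this gives $[\b',\b']\sub\b'$. Because $\l\sub\g_{\ol{1}}$, passing to even parts yields $\b'_{\ol{0}}=\b_{\ol{0}}$, which is a Borel of $\g_{\ol{0}}$, and in particular $[\b'_{\ol{0}},\b'_{\ol{0}}]=[\b_{\ol{0}},\b_{\ol{0}}]$.

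Next I would verify the bracket condition $[\b'_{\ol{1}},\b'_{\ol{1}}]\sub[\b'_{\ol{0}},\b'_{\ol{0}}]$. Writing $\b'_{\ol{1}}=\b_{\ol{1}}+\l$ and expanding,
\[
[\b'_{\ol{1}},\b'_{\ol{1}}]\sub[\b_{\ol{1}},\b_{\ol{1}}]+[\b_{\ol{1}},\l]+[\l,\l].
\]
Here $[\b_{\ol{1}},\b_{\ol{1}}]\sub[\b_{\ol{0}},\b_{\ol{0}}]$ since $\b$ is a hyperborel, and $[\l,\l]=0$. The cross term $[\b_{\ol{1}},\l]$ is a bracket of two odd subspaces, hence lies in $\g_{\ol{0}}$; but $\l$ is an ideal, so $[\b_{\ol{1}},\l]\sub\l\sub\g_{\ol{1}}$. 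Therefore $[\b_{\ol{1}},\l]\sub\g_{\ol{0}}\cap\g_{\ol{1}}=0$, and we conclude $[\b'_{\ol{1}},\b'_{\ol{1}}]\sub[\b_{\ol{0}},\b_{\ol{0}}]=[\b'_{\ol{0}},\b'_{\ol{0}}]$.

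Thus $\b'$ is a subalgebra containing $\b$ and satisfying the first two conditions of \cref{defn_borel}, so by maximality $\b'=\b$, giving $\l\sub\b$. I do not anticipate any real obstacle; the one point requiring care is the vanishing of $[\b_{\ol{1}},\l]$, which follows from it being simultaneously even (a bracket of odd subspaces) and odd (contained in the ideal $\l\sub\g_{\ol{1}}$).
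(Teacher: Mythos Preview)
Your proposal is correct and follows exactly the same approach as the paper: show that $\b+\l$ still satisfies the first two conditions in \cref{defn_borel}, then invoke maximality. The paper's proof is a one-line sketch of precisely this argument; you have simply filled in the details, including the point that $[\b_{\ol{1}},\l]=0$ because it is simultaneously even and contained in $\l\sub\g_{\ol{1}}$.
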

\begin{proof}
	If $\b$ is a hyperborel subalgebra, then $\b+\l$ is a subalgebra that still satisfies the first two properties of being a hyperborel, and thus by maximality $\b=\b+\l$.
\end{proof}

\begin{cor}\label{borel_in_graded_case}
	Let $G$ be a group-graded quasireductive supergroup.  Then every hyperborel of $\g$ is of the form $\b_{\ol{0}}\oplus\g_{\ol{1}}$, where $\b_{\ol{0}}$ is a Borel subalgebra of $\g_{\ol{0}}$.  In particular $G$ has only one hyperborel subalgebra 	up to conjugacy. 
\end{cor}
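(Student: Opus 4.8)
The plan is to deduce this as a near-immediate consequence of \cref{graded_group_abel_char} and \cref{abelian_ideal}. First I would use the hypothesis that $G$ is group-graded to invoke \cref{graded_group_abel_char}, which gives $[\g_{\ol{1}},\g_{\ol{1}}]=0$. Since $[\g_{\ol{0}},\g_{\ol{1}}]\sub\g_{\ol{1}}$ always holds, this shows that $\g_{\ol{1}}$ is itself an abelian ideal of $\g$ contained in $\g_{\ol{1}}$. Hence \cref{abelian_ideal} applies verbatim and tells us that $\g_{\ol{1}}$ is contained in every hyperborel subalgebra of $\g$.

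Given any hyperborel $\b$, this containment forces $\b_{\ol{1}}=\g_{\ol{1}}$; combined with the defining requirement (\cref{defn_borel}) that $\b_{\ol{0}}$ be a Borel subalgebra of $\g_{\ol{0}}$, we conclude $\b=\b_{\ol{0}}\oplus\g_{\ol{1}}$. Conversely, for any Borel $\b_{\ol{0}}$ of $\g_{\ol{0}}$ I would verify that $\b_{\ol{0}}\oplus\g_{\ol{1}}$ genuinely is a hyperborel: it is a subalgebra because $\g_{\ol{1}}$ is a $\g_{\ol{0}}$-submodule and $[\g_{\ol{1}},\g_{\ol{1}}]=0$; the bracket condition $[\g_{\ol{1}},\g_{\ol{1}}]\sub[\b_{\ol{0}},\b_{\ol{0}}]$ is trivially satisfied since the left side is zero; and it is maximal among subalgebras with these two properties, since enlarging it can only enlarge the even part (but all Borels of the reductive Lie algebra $\g_{\ol{0}}$ have the same dimension, so the even part is forced to be unchanged) or the odd part (but the odd part is already all of $\g_{\ol{1}}$). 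Thus the hyperborels of $\g$ are exactly the subalgebras $\b_{\ol{0}}\oplus\g_{\ol{1}}$ with $\b_{\ol{0}}$ a Borel of $\g_{\ol{0}}$.

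For the final assertion that there is only one hyperborel up to conjugacy, I would use the second bullet of \cref{borel_rmk}: any hyperborel may be conjugated by an inner automorphism of $\g$ so that its even part becomes a prescribed Borel of $\g_{\ol{0}}$. Since by the structure result just established a hyperborel is completely determined by its even part, this shows any two hyperborels are conjugate (by an inner automorphism coming from $G_0\sub G$). The only step requiring a moment of care is the maximality check in the converse direction — one must make sure "maximal with this property" in \cref{defn_borel} cannot be defeated by a subalgebra with a strictly larger even part that is still a Borel — but this is ruled out because Borels of a reductive Lie algebra are never strictly nested. I do not expect any serious obstacle; the corollary is essentially formal once \cref{graded_group_abel_char} and \cref{abelian_ideal} are available.
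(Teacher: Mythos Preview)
Your proposal is correct and follows essentially the same route as the paper: recognize via \cref{graded_group_abel_char} that $\g_{\ol{1}}$ is an odd abelian ideal, apply \cref{abelian_ideal} to force $\g_{\ol{1}}\sub\b$ for every hyperborel, and then use conjugacy of Borels in $\g_{\ol{0}}$ for the last assertion. Your write-up is in fact more detailed than the paper's, since you explicitly verify the converse direction and the maximality check that the paper leaves implicit.
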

\begin{proof}
	In this case $\g_{\ol{1}}$ is an abelian ideal of $\g$, so we use \cref{abelian_ideal} to get that every hyperborel must contain $\g_{\ol{1}}$, and thus they are all of this form.  If $\b,\b'$ are two hyperborels, then conjugating $\b_{\ol{0}}$ to $\b_{\ol{0}}'$ will conjugate $\b$ to $\b'$.
\end{proof}
In fact, we have
\begin{prop}
	If $\g$ is quasireductive and $\g_{\ol{1}}$ is contained in a hyperborel subalgebra, then $\g$ is graded.
\end{prop}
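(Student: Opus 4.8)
The plan is to extract from the hyperborel axiom that $[\g_{\ol{1}},\g_{\ol{1}}]$ lands inside the nilradical of a Borel subalgebra of $\g_{\ol{0}}$, and then to invoke the elementary fact that an ideal of a reductive Lie algebra cannot sit inside such a nilradical unless it is zero.

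First I would fix a hyperborel subalgebra $\b\sub\g$ with $\g_{\ol{1}}\sub\b$. Since $\b_{\ol{1}}=\b\cap\g_{\ol{1}}$, the hypothesis forces $\b_{\ol{1}}=\g_{\ol{1}}$, so the defining property $[\b_{\ol{1}},\b_{\ol{1}}]\sub[\b_{\ol{0}},\b_{\ol{0}}]$ of a hyperborel (\cref{defn_borel}) gives
\[
[\g_{\ol{1}},\g_{\ol{1}}]=[\b_{\ol{1}},\b_{\ol{1}}]\sub[\b_{\ol{0}},\b_{\ol{0}}].
\]
Because $G$ is quasireductive, $\g_{\ol{0}}$ is reductive and $\b_{\ol{0}}$ is a genuine Borel subalgebra of it, so $[\b_{\ol{0}},\b_{\ol{0}}]$ is exactly the nilradical $\n_{\ol{0}}$ of $\b_{\ol{0}}$; hence $[\g_{\ol{1}},\g_{\ol{1}}]\sub\n_{\ol{0}}$.

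Next I would check that $[\g_{\ol{1}},\g_{\ol{1}}]$ is an ideal of $\g_{\ol{0}}$: for $x\in\g_{\ol{0}}$ and $y,z\in\g_{\ol{1}}$ the Jacobi identity gives $[x,[y,z]]=[[x,y],z]+[y,[x,z]]$ with $[x,y],[x,z]\in\g_{\ol{1}}$, so $\operatorname{ad}(\g_{\ol{0}})$ preserves $[\g_{\ol{1}},\g_{\ol{1}}]$. Then I apply the standard fact: an ideal $\mathfrak{i}$ of a reductive Lie algebra is itself reductive, and if moreover $\mathfrak{i}\sub\n_{\ol{0}}$ then $\mathfrak{i}$ is nilpotent (a subalgebra of a nilpotent algebra), hence abelian; an abelian ideal of a reductive Lie algebra is central, but the center of $\g_{\ol{0}}$ meets $\n_{\ol{0}}\sub[\g_{\ol{0}},\g_{\ol{0}}]$ trivially, forcing $\mathfrak{i}=0$. (Equivalently: every element of $\n_{\ol{0}}$ is $\operatorname{ad}$-nilpotent in $\g_{\ol{0}}$, so $[\g_{\ol{1}},\g_{\ol{1}}]$ is an ideal consisting of $\operatorname{ad}$-nilpotent elements, hence lies in the nilradical of the reductive algebra $\g_{\ol{0}}$, which is $0$.) Taking $\mathfrak{i}=[\g_{\ol{1}},\g_{\ol{1}}]$ yields $[\g_{\ol{1}},\g_{\ol{1}}]=0$, i.e.\ $\g$ is graded. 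There is no real obstacle here: the only points needing care are the Jacobi computation showing $[\g_{\ol{1}},\g_{\ol{1}}]$ is $\g_{\ol{0}}$-stable, and citing the correct classical statement about ideals of reductive Lie algebras contained in a nilradical.
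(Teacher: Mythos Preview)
Your proof is correct and follows essentially the same strategy as the paper: both observe that $[\g_{\ol{1}},\g_{\ol{1}}]$ is a $\g_{\ol{0}}$-ideal contained in $[\b_{\ol{0}},\b_{\ol{0}}]$, and then use reductivity of $\g_{\ol{0}}$ to force it to vanish. The paper phrases the last step as ``a nonzero submodule of the adjoint representation must meet any Cartan subalgebra nontrivially, whereas $[\b_{\ol{0}},\b_{\ol{0}}]$ does not,'' which is just an equivalent formulation of the standard fact you invoke.
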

\begin{proof}
	Since $[\g_{\ol{1}},\g_{\ol{1}}]\sub\g_{\ol{0}}$ is a submodule of the adjoint representation, if it is nonzero it must intersect any Cartan subalgebra nontrivially.  Thus if $\g_{\ol{1}}$ is contained in a hyperborel subalgebra we must have $[\g_{\ol{1}},\g_{\ol{1}}]=0$, i.e.\ $\g$ is graded.
\end{proof}
The following lemma now follows easily from what we have shown so far.
\begin{lemma}\label{borel_graded_sub}
	If $G$ is a quasireductive supergroup, and $B$ is a hyperborel subgroup of $G$, then $\operatorname{gr}G$ is quasireductive and $\operatorname{gr}B$ is a subgroup of a hyperborel of $\operatorname{gr}G$.
\end{lemma}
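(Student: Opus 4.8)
The statement has three assertions: that $\operatorname{gr}G$ is quasireductive, that $\operatorname{gr}B$ makes sense (i.e.\ $B$ is locally graded and closed under the relevant operations), and that $\operatorname{gr}B$ is contained in a hyperborel of $\operatorname{gr}G$. The first is immediate: we noted above that $(\operatorname{gr}G)_0$ and $G_0$ are identified as algebraic groups, so $G_0$ reductive forces $(\operatorname{gr}G)_0$ reductive, hence $\operatorname{gr}G$ is quasireductive. For the second, $B$ is a connected algebraic supergroup, hence a smooth affine supervariety, hence graded; applying the $\operatorname{gr}$ functor to the inclusion $\phi\colon B\hookrightarrow G$ (using Koszul's $\Z$-grading so that $\phi^*\colon\C[G]\to\C[B]$ is graded, as in the lemma preceding \cref{graded_homog}) yields a closed embedding $\operatorname{gr}\phi\colon\operatorname{gr}B\hookrightarrow\operatorname{gr}G$ of supergroups. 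So $\operatorname{gr}B$ is genuinely a closed subgroup of $\operatorname{gr}G$, and $\operatorname{Lie}(\operatorname{gr}B)=\b^{\operatorname{gr}}\sub\g^{\operatorname{gr}}$.

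The heart of the matter is showing $\b^{\operatorname{gr}}$ sits inside a hyperborel of $\g^{\operatorname{gr}}$. First I would unwind what $\b^{\operatorname{gr}}$ is as a subspace of $\g^{\operatorname{gr}}\cong\g$ (as super vector spaces). Under the $G_0\times G_0$-equivariant grading, the even part is unchanged: $\b^{\operatorname{gr}}_{\ol 0}=\b_{\ol 0}$, which is still a Borel of $\g_{\ol 0}=\g^{\operatorname{gr}}_{\ol 0}$ — that is the first hyperborel axiom. For the bracket axiom, I'd use \cref{graded_group_abel_char}: in $\g^{\operatorname{gr}}$ we have $[\g^{\operatorname{gr}}_{\ol 1},\g^{\operatorname{gr}}_{\ol 1}]=0$, so a fortiori $[\b^{\operatorname{gr}}_{\ol 1},\b^{\operatorname{gr}}_{\ol 1}]=0\sub[\b^{\operatorname{gr}}_{\ol 0},\b^{\operatorname{gr}}_{\ol 0}]$. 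Thus $\b^{\operatorname{gr}}$ satisfies the first two conditions of being a hyperborel of $\g^{\operatorname{gr}}$; it need not be maximal, but any such subalgebra extends to a maximal one, i.e.\ is contained in a hyperborel. In fact, by \cref{borel_in_graded_case} applied to the group-graded supergroup $\operatorname{gr}G$, every hyperborel of $\g^{\operatorname{gr}}$ is of the form $\b_{\ol 0}'\oplus\g^{\operatorname{gr}}_{\ol 1}$, so one concretely takes the hyperborel with $\b_{\ol 0}'=\b^{\operatorname{gr}}_{\ol 0}=\b_{\ol 0}$ and observes $\b^{\operatorname{gr}}_{\ol 1}\sub\g^{\operatorname{gr}}_{\ol 1}$.

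The one point requiring care — and the likely main obstacle — is that the $\Z$-grading used to build $\operatorname{gr}B$ and $\operatorname{gr}G$ must be compatible, so that $\b^{\operatorname{gr}}\sub\g^{\operatorname{gr}}$ really is the associated-graded of $\b\sub\g$ inside the associated-graded of $\g$, and so that $\b^{\operatorname{gr}}_{\ol 0}$ genuinely equals $\b_{\ol 0}$ rather than some conjugate. This is handled by choosing the Koszul grading on $\C[G]$ — which is $G_0\times G_0$-equivariant — and pulling it back along $\phi^*$ to get the Koszul grading on $\C[B]$; then $I_B\sub\C[G]$ is a graded ideal, $\operatorname{gr}\phi=\phi$ as maps of spaces, and the even tangent vectors act without grading shift (the lemma two before \cref{graded_homog}), so $\b^{\operatorname{gr}}_{\ol 0}=\b_{\ol 0}$ as subalgebras of $\g^{\operatorname{gr}}_{\ol 0}=\g_{\ol 0}$. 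Once this bookkeeping is in place the result follows by assembling \cref{graded_group_abel_char}, \cref{borel_in_graded_case}, and \cref{abelian_ideal} as above.
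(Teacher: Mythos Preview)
Your proposal is correct and matches the paper's intended argument: the paper omits the proof entirely, stating only that the lemma ``follows easily from what we have shown so far,'' and the ingredients you invoke --- the identification $(\operatorname{gr}G)_0=G_0$, the vanishing $[\g^{\operatorname{gr}}_{\ol 1},\g^{\operatorname{gr}}_{\ol 1}]=0$ from \cref{graded_group_abel_char}, and the description of hyperborels of a group-graded supergroup in \cref{borel_in_graded_case} --- are precisely those preceding results. Your attention to the compatibility of the Koszul gradings on $B$ and $G$ is a welcome clarification of a point the paper leaves implicit.
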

We can now prove that the functor $\operatorname{gr}$ preserves sphericity.
\begin{cor}
	Suppose that $G$ is quasireductive and $X$ is a spherical $G$-supervariety.  Then $\operatorname{gr}X$ is a locally graded spherical $\operatorname{gr}G$-supervariety under the graded action.
\end{cor}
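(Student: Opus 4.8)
The strategy is to push the open $B$-orbit of $X$ through the functor $\operatorname{gr}$ and then enlarge $\operatorname{gr}B$ to an honest hyperborel of $\operatorname{gr}G$. Since $X$ is spherical, fix a hyperborel $B$ of $G$, a point $x\in X(\C)$ whose $B$-orbit is open, and write $U$ for the corresponding open subvariety; as in the discussion surrounding \cref{submersion_injec}, $U$ is a homogeneous $B$-supervariety with $U\cong B/K$, $K=\Stab_B(x)$. I would first record that $\operatorname{gr}$ behaves well with respect to open subschemes and equivariant maps: because $|\operatorname{gr}X|=|X|$ and the conormal-power filtration restricts to open subsets, $\operatorname{gr}U$ is canonically the open subscheme of $\operatorname{gr}X$ supported on $|U|$, and because $|U|$ is stable under $(\operatorname{gr}B)_0=B_0$ it is $\operatorname{gr}B$-stable by \cref{stability}. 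Moreover the action of $\operatorname{gr}B$ on $\operatorname{gr}U$ obtained by restricting the graded $\operatorname{gr}G$-action on $\operatorname{gr}X$ coincides with $\operatorname{gr}(a|_{B\times U})$: both are actions of $\operatorname{gr}B$ on $\operatorname{gr}U$ extending the $B_0$-action on $U$ with the same $\operatorname{gr}\b$-infinitesimal action, so they agree by the uniqueness in \cref{HC_pair_action}.

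Next I would apply \cref{graded_homog} to the homogeneous $B$-space $U\cong B/K$: it gives that $\operatorname{gr}U\cong\operatorname{gr}B/\operatorname{gr}K$ is a homogeneous $\operatorname{gr}B$-supervariety, and by the previous step this is its structure as a sub-$\operatorname{gr}B$-variety of $\operatorname{gr}X$. Hence $\operatorname{gr}U$ is an open $\operatorname{gr}B$-orbit inside $\operatorname{gr}X$; concretely, for the point $x\in\operatorname{gr}U(\C)=U(\C)$ the orbit map $\operatorname{gr}B\to\operatorname{gr}X$ is a submersion at $e$, so by \cref{orbit_diff} the evaluation map $\rho(\operatorname{gr}\b)\to T_x(\operatorname{gr}X)$ is surjective.

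Finally I would pass from $\operatorname{gr}B$ to a hyperborel of $\operatorname{gr}G$. By \cref{borel_graded_sub}, $\operatorname{gr}G$ is quasireductive and $\operatorname{gr}B$ is contained in a hyperborel $\widetilde B$ of $\operatorname{gr}G$; hence $\operatorname{gr}\b\subseteq\widetilde\b:=\operatorname{Lie}\widetilde B$, and since the $\operatorname{gr}B$-action is the restriction of the $\operatorname{gr}G$-action we get $\rho(\operatorname{gr}\b)\subseteq\rho(\widetilde\b)$ as subspaces of $\Gamma(\operatorname{gr}X,\TT_{\operatorname{gr}X})$. Therefore the evaluation map $\rho(\widetilde\b)\to T_x(\operatorname{gr}X)$ is still surjective, so by \cref{orbit_diff} the $\widetilde B$-orbit map at $x$ is a submersion at $e$; thus $\operatorname{gr}X$ has an open $\widetilde B$-orbit and is spherical for $\operatorname{gr}G$. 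It is locally graded by construction, since the blanket assumption that $X$ is locally graded yields $\operatorname{gr}X\cong(|X|,\Lambda^\bullet\NN_X)$. The one delicate point is the first paragraph — identifying $\operatorname{gr}$ of the open orbit as an open $\operatorname{gr}B$-orbit inside $\operatorname{gr}X$, compatibly with the graded action; once that is in place, the rest is a short comparison of differentials of orbit maps.
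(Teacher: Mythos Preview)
Your proposal is correct and follows essentially the same route as the paper: apply \cref{graded_homog} to the open $B$-orbit to see that $\operatorname{gr}B$ acts with an open orbit on the same underlying open set of $|\operatorname{gr}X|=|X|$, then invoke \cref{borel_graded_sub} to enlarge $\operatorname{gr}B$ to a hyperborel of $\operatorname{gr}G$. The paper's proof is a terse three-line version of exactly this; your added care about the compatibility of $\operatorname{gr}$ with open subschemes and with restriction of the action (the ``delicate point'' you flag) is just making explicit what the paper leaves implicit.
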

\begin{proof}
	Let $B$ be a hyperborel of $G$ with an open orbit on $X$.  Then by \cref{graded_homog}, $\operatorname{gr}B$ has an open orbit on the same underlying open subset of $|X|$.  By \cref{borel_graded_sub}, $\operatorname{gr}B$ is contained in a hyperborel of $\operatorname{gr}G$, and the hyperborel of $\operatorname{gr}G$ containing $\operatorname{gr}B$ has an open orbit at $x$.  Thus $\operatorname{gr}X$ is spherical.
\end{proof}

For the rest of this section we assume that $G$ is a group-graded quasireductive supergroup.

\begin{prop}\label{graded_restriction_injective}
	Suppose that $X$ is a locally graded spherical $G$-supervariety.  Then $\operatorname{soc}\C[X]$ is a subalgebra of $\C[X]$ and the restriction of $i_X^*$ to $\operatorname{soc}\C[X]$ is injective.  In particular, $\operatorname{soc}\C[X]$ is an even subalgera of $\C[X]$ without nilpotents.
\end{prop}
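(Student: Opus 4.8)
The plan is to reduce the statement to two ingredients: that in the group-graded situation $\operatorname{soc}\C[X]$ is nothing but the $G_{0}$-finite part of the $\g_{\ol{1}}$-invariant subalgebra $\C[X]^{\g_{\ol{1}}}$ — this is where being group-graded is really used — and that sphericity forces this subalgebra to inject, through an orbit map, into the functions of an ordinary (even, reduced) variety.

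First I would analyse simple $G$-modules. Since $G$ is group-graded, $\g_{\ol{1}}$ is an abelian ideal of $\g$ by \cref{graded_group_abel_char}, so its elements act on any $G$-module by pairwise anticommuting, square-zero operators and hence $\g_{\ol{1}}^{k}\cdot M=0$ once $k>\dim\g_{\ol{1}}$. If $L$ is a simple $G$-module, then $\g_{\ol{1}}L$ is a $G$-submodule — it is $G_{0}$-stable because $\g_{\ol{1}}$ is $\Ad(G_{0})$-stable, and $\g$-stable because $\g_{\ol{1}}$ is an ideal — hence it is $0$ or $L$, and the latter is ruled out by nilpotency; so $\g_{\ol{1}}$ acts trivially on $L$, and $L$ is then automatically simple as a $G_{0}$-module. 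Conversely, $\C[X]^{\g_{\ol{1}}}$ is a $G_{0}$-stable subalgebra, and any simple $G_{0}$-submodule of it is $\g$-stable (killed by $\g_{\ol{1}}$, preserved by $\g_{\ol{0}}$), hence a simple $G$-submodule. Therefore $\operatorname{soc}\C[X]=\operatorname{soc}_{G_{0}}\!\big(\C[X]^{\g_{\ol{1}}}\big)$; since $G_{0}$ is reductive this equals the subalgebra of $G_{0}$-finite vectors of the subalgebra $\C[X]^{\g_{\ol{1}}}$, and in particular it is a subalgebra of $\C[X]$. This gives the first assertion.

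Next I would bring in sphericity. Let $B$ be a hyperborel with an open orbit on $X$; by \cref{borel_in_graded_case}, $\b=\b_{\ol{0}}\oplus\g_{\ol{1}}$, so $\g_{\ol{1}}$ is an ideal of $\b$, the subgroup $N_{\ol{1}}:=\exp\g_{\ol{1}}$ is closed and normal in $B$, and the composite $B_{0}\hookrightarrow B\twoheadrightarrow B/N_{\ol{1}}$ is an isomorphism (the quotient has purely even Lie algebra $\b_{\ol{0}}$). Pick $x$ in the open orbit, so the orbit map $a_{x}\colon B\to X$ is a submersion and hence $a_{x}^{*}\colon\C[X]\to\C[B]$ is injective by \cref{submersion_injec}. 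Because $a_{x}$ is equivariant for left translation on $B$, $a_{x}^{*}$ carries the $\g_{\ol{1}}$-action on $\C[X]$ to the action of $\g_{\ol{1}}$ on $\C[B]$ by the vector fields generating left translation by $N_{\ol{1}}$, whose joint kernel is exactly the algebra of left-$N_{\ol{1}}$-invariant functions, namely the pullback $q^{*}\C[B_{0}]$ along $q\colon B\to B/N_{\ol{1}}\cong B_{0}$. Thus $a_{x}^{*}$ maps $\C[X]^{\g_{\ol{1}}}$, and in particular $\operatorname{soc}\C[X]$, injectively into $q^{*}\C[B_{0}]$, which is an even, reduced, commutative subalgebra of $\C[B]$ on which $i_{B}^{*}$ restricts to an isomorphism (because $B_{0}\hookrightarrow B\twoheadrightarrow B/N_{\ol{1}}\cong B_{0}$ is the identity).

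Finally I would conclude. The injection $a_{x}^{*}\colon\operatorname{soc}\C[X]\hookrightarrow q^{*}\C[B_{0}]$ into an even reduced commutative algebra immediately yields that $\operatorname{soc}\C[X]$ is even, commutative, and without nilpotents. For injectivity of $i_{X}^{*}$ on the socle, I would use the functoriality identity $i_{B}^{*}\circ a_{x}^{*}=(a_{x})_{0}^{*}\circ i_{X}^{*}$ for the induced morphism $(a_{x})_{0}\colon B_{0}\to X_{0}$ on even subvarieties: if $f\in\operatorname{soc}\C[X]$ and $i_{X}^{*}(f)=0$, then $i_{B}^{*}(a_{x}^{*}(f))=0$, but $a_{x}^{*}(f)$ lies in $q^{*}\C[B_{0}]$, on which $i_{B}^{*}$ is injective, so $a_{x}^{*}(f)=0$ and hence $f=0$. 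The step I expect to be the main obstacle is the first one — correctly pinning $\operatorname{soc}\C[X]$ down as the $G_{0}$-finite part of $\C[X]^{\g_{\ol{1}}}$, which is exactly where the group-graded hypothesis enters through the triviality of the $\g_{\ol{1}}$-action on simple modules; once that is in place, the remaining steps are a fairly mechanical consequence of sphericity via the orbit map.
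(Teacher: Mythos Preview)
Your argument is correct, and the first step (simple $G$-modules have trivial $\g_{\ol{1}}$-action, hence $\operatorname{soc}\C[X]\subseteq\C[X]^{\g_{\ol{1}}}$, which is a subalgebra) is exactly the observation the paper uses to show the socle is a subalgebra. One small simplification: since the $G$-action on $\C[X]$ is algebraic, every vector is already $G_{0}$-finite, so in fact $\operatorname{soc}\C[X]=\C[X]^{\g_{\ol{1}}}$ outright; there is no need to pass to the $G_{0}$-finite part.

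Where you genuinely diverge from the paper is in the proof of injectivity of $i_X^*$ on the socle. The paper argues directly: if $\operatorname{soc}\C[X]\cap\ker i_X^*\neq 0$ then it contains a simple submodule, hence a $B$-highest weight vector $f$; but $f\in\ker i_X^*=\Gamma(X,\JJ_X)$ forces $f$ to be nilpotent, contradicting \cref{spher_char}. This is a two-line application of the sphericity criterion. Your route is more structural: you embed $\C[X]$ into $\C[B]$ via the orbit map (using \cref{submersion_injec}), identify $\C[B]^{\g_{\ol{1}}}$ with $q^*\C[B_0]$ using the semidirect product decomposition $B\cong B_0\ltimes N_{\ol{1}}$ available in the group-graded case, and then use that $i_B^*$ is an isomorphism on $q^*\C[B_0]$ together with functoriality of $i_{(-)}^*$. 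Both work; the paper's argument is shorter and invokes sphericity only through the non-nilpotency of highest weight functions, while yours makes the geometry of the open $B$-orbit more visible and never needs to single out a highest weight vector.
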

\begin{proof}
	A semisimple representation of $G$ is exactly the pullback of a semisimple representation of $G_0$ under the natural surjection $G\to G_0$.  Therefore $\operatorname{soc}\C[X]$ can be thought of as a sum of simple $G_0$-representations, and thus the tensor product of two subrepresentations of $\operatorname{soc}\C[X]$ is again a semisimple $G_0$-representation.  Since multiplication is $G$-equivariant, it follows that $\operatorname{soc}\C[X]$ is a subalgebra of $\C[X]$.  
	
	Recall that $i_X$ is a $G_0$-equivariant map of algebras.  If $\operatorname{soc}\C[X]\cap \ker i_X^*\neq0$, then it must contain a simple subrepresentation $L$.  Let $f\in L$ be the $B$-highest weight vector for some hyperborel $B$ of $G$.  Then by \cref{spher_char}, $f$ is non-nilpotent and thus $i_X^*(f)\neq0$, a contradiction.  This completes the proof.
\end{proof}

\begin{cor}\label{graded_affine_spher_complete_red}
	For a locally graded affine spherical $G$-supervariety $X$, $\C[X]$ is completely reducible if and only if $X=X_0$.
\end{cor}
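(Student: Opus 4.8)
The plan is to treat the two implications separately; the substantive input for the forward direction is already isolated in \cref{graded_restriction_injective}, so what remains is mostly organizational.

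For the implication $X=X_0\implies\C[X]$ completely reducible I would argue as follows. If $X=X_0$ then $\C[X]=\C[X_0]$ is purely even, so every odd derivation of $\C[X]$ — in particular the action of any element of $\g_{\ol 1}$ — vanishes identically. Since $G$ is group-graded, $\C[G]$ retracts onto its degree-zero Hopf subalgebra $\C[G_0]\subset\C[G]$ (dually, $\JJ_G$ is the Hopf ideal cut out by the positive degrees of the chosen grading); this exhibits $G$ as $G_0\ltimes K$ where $K$ is the connected normal subgroup with $G/K\cong G_0$ and $\operatorname{Lie}K=\g_{\ol 1}$, which is abelian by \cref{graded_group_abel_char}. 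As $K$ is connected and $\operatorname{Lie}K$ acts trivially on $\C[X]$, the subgroup $K$ acts trivially, so the $G$-module $\C[X]$ is the pullback along $G\twoheadrightarrow G_0$ of the $G_0$-module $\C[X_0]$. Because $G_0$ is reductive this $G_0$-module is completely reducible, and pulling back along a quotient whose kernel already acts trivially preserves complete reducibility; hence $\C[X]$ is completely reducible as a $G$-module. (Alternatively one may quote directly the fact used in the proof of \cref{graded_restriction_injective}, that semisimple $G$-modules are exactly the pullbacks of semisimple $G_0$-modules.)

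For the converse, suppose $\C[X]$ is completely reducible. Then $\C[X]$ is the sum of its simple submodules, i.e.\ $\C[X]=\operatorname{soc}\C[X]$. By \cref{graded_restriction_injective} the subalgebra $\operatorname{soc}\C[X]$ is even, that is $\operatorname{soc}\C[X]\subseteq(\C[X])_{\ol 0}$; therefore $(\OO_X)_{\ol 1}=(\C[X])_{\ol 1}=0$. Then the ideal sheaf $\JJ_X$ generated by $(\OO_X)_{\ol 1}$ is zero, which by the definition of the even subvariety says precisely that $X=X_0$.

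There is no genuinely hard step here: the only point needing a little care is the bookkeeping in the forward direction that turns ``$\g_{\ol 1}$ acts trivially'' into ``$\C[X]$ is inflated from $G_0$'', and this is routine once group-gradedness is used. All of the real content — the injectivity of $i_X^*$ on the socle, hence the evenness of $\operatorname{soc}\C[X]$ — is supplied by \cref{graded_restriction_injective}, together with the reductivity of $G_0$.
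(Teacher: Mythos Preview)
Your proof is correct and follows essentially the same approach as the paper's. The paper's version is terser: for the forward direction it simply says ``$G$ acts via the quotient to $G_0$'' (which you unpack carefully using the group-graded structure), and for the converse it invokes the injectivity of $i_X^*$ on the socle from \cref{graded_restriction_injective} rather than the evenness of $\operatorname{soc}\C[X]$, but these are equivalent consequences of that proposition and lead to $X=X_0$ in the same way.
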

\begin{proof}
	If $X=X_0$ then $G$ acts via the quotient to $G_0$ so $\C[X]$ is completely reducible.
	
	On the other hand, the condition that $\C[X]$ is completely reducible is equivalent to $\C[X]=\operatorname{soc}\C[X]$.  By \cref{graded_restriction_injective}, this condition implies that $i_X^*$ is an isomorphism, so $X=X_0$.
\end{proof}

We now focus on the case of homogeneous spherical supervarieties for $G$.
\begin{lemma}\label{graded_action_graded_case}
	If $X$ is a homogeneous $G$-supervariety, then $X$ is graded, and the action $a:G\times X\to X$ is isomorphic to the graded action $\operatorname{gr}a$.
\end{lemma}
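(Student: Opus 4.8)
The plan is to exhibit $X$ as a quotient $G/K$ and then check that the functor $\operatorname{gr}$ leaves each ingredient unchanged. Since $X$ is homogeneous and irreducible it consists of a single $G$-orbit, so picking $x\in X(\C)$ and setting $K=\operatorname{Stab}_G(x)$, the orbit map $a_x$ induces a $G$-equivariant isomorphism $X\cong G/K$; here one uses that an $X$ which is an open orbit at every point must equal the locally closed orbit $G\cdot x$, by irreducibility of $X$ and the fact that orbits of algebraic group actions are open in their closures. First I would recall, from the discussion preceding \cref{graded_homog}, that with respect to the grading of $\C[G]$ given by Koszul's realization the restriction map $\C[G]\to\C[K]$ is graded, so the defining ideal $I_K$ is a graded ideal; hence $\operatorname{gr}K=K$ as a group-graded closed subgroup of $G$. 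This is exactly the computation made in the stabilizer lemma above, which gives $\operatorname{gr}K=\operatorname{Stab}_{\operatorname{gr}G}(x)=\operatorname{Stab}_G(x)=K$. Since $G$ is group-graded, the same grading also gives $\operatorname{gr}G=G$ as supergroups.

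Next I would feed $X\cong G/K$ into \cref{graded_homog}, noting that $X$ is locally graded because it is smooth (homogeneous supervarieties are smooth): this produces a $\operatorname{gr}G$-equivariant isomorphism $\operatorname{gr}X\cong\operatorname{gr}G/\operatorname{gr}K$. Substituting $\operatorname{gr}G=G$ and $\operatorname{gr}K=K$ (compatibly, i.e.\ $K$ sits inside $G$ the same way $\operatorname{gr}K$ sits inside $\operatorname{gr}G$) yields a chain of isomorphisms $\operatorname{gr}X\cong\operatorname{gr}G/\operatorname{gr}K=G/K\cong X$, each step equivariant for the relevant group and hence for $G=\operatorname{gr}G$; so this composite carries the action $\operatorname{gr}a$ of $\operatorname{gr}G$ on $\operatorname{gr}X$ to the action $a$ of $G$ on $X$. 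Because $\operatorname{gr}X=(|X|,\bigoplus_{i\ge0}\JJ_X^i/\JJ_X^{i+1})$ is a graded supervariety, the composite isomorphism $X\cong\operatorname{gr}X$ is a grading of $X$, and under it $a$ is identified with $\operatorname{gr}a$, which is the assertion.

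The step I expect to be the crux is the identification $\operatorname{gr}K=K$: one needs the defining ideal of the \emph{a priori} merely closed subgroup $K$ to be homogeneous for the relevant grading, and this holds precisely for the Koszul realization grading, which is the grading one should use throughout (and the one for which ``group-graded'' is seen via \cref{graded_group_abel_char}). Once this is secured, the remainder is bookkeeping combining \cref{graded_homog} with the stabilizer identification; the only other thing to check carefully is the legitimacy of writing $X\cong G/K$ for homogeneous $X$, i.e.\ that an everywhere-open-orbit is a single orbit.
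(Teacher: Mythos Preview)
Your proposal is correct and is essentially the paper's approach spelled out in full: the paper's proof is the single line ``This follows directly from \cref{graded_homog},'' and you have unpacked precisely what that entails, namely writing $X\cong G/K$, invoking \cref{graded_homog} to get $\operatorname{gr}X\cong\operatorname{gr}G/\operatorname{gr}K$, and then using the standing assumption that $G$ is group-graded together with the discussion preceding \cref{graded_homog} (that $I_K$ is graded for the Koszul grading) to conclude $\operatorname{gr}G=G$, $\operatorname{gr}K=K$, and hence $\operatorname{gr}X\cong X$ equivariantly. Your identification of the ``crux'' as $\operatorname{gr}K=K$ and its resolution via the Koszul grading matches the paper's setup exactly.
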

\begin{proof}
	This follows directly from \cref{graded_homog}.
\end{proof}

\begin{prop}
	If $X$ is a homogeneous $G$-supervariety, then $X$ is spherical if and only if $X_0$ is a spherical $G_0$-variety.
\end{prop}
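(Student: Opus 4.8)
The plan is to exploit that when $G$ is group-graded a hyperborel already contains all of $\g_{\ol{1}}$, so that sphericity of $X$ is detected entirely on the even part $X_0$.

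The forward implication is essentially formal and does not use homogeneity: if $B$ is any hyperborel of $G$ and $a_x|_B\colon B\to X$ is a submersion at $e$ for some $x\in X(\C)$, then $\rho_a(\b)\to T_xX$ is surjective by \cref{orbit_diff}; taking even parts and using $(T_xX)_{\ol{0}}=T_xX_0$ (which holds since the maximal ideal $\bar{\m}_x$ of $\OO_{X_0,x}$ satisfies $\bar{\m}_x/\bar{\m}_x^2=(\m_x/\m_x^2)_{\ol{0}}$) gives that $\rho_a(\b_{\ol{0}})\to T_xX_0$ is surjective, i.e.\ $B_0$ acts submersively on $X_0$ at $x$. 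Here one uses that $a_x|_{G_0}$ factors through $X_0\hookrightarrow X$ as the $G_0$-orbit map of $x$ inside $X_0$, so that under $(T_xX)_{\ol{0}}=T_xX_0$ the even part of $\rho_a(\g)\to T_xX$ is, via \cref{orbit_diff}, the differential of the $G_0$-orbit map of $x$ in $X_0$. Hence $X$ spherical forces $X_0$ spherical.

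For the converse, assume $X_0$ is spherical for $G_0$. By \cref{borel_in_graded_case} every hyperborel of $\g$ has the form $\b=\b_{\ol{0}}\oplus\g_{\ol{1}}$ with $\b_{\ol{0}}=\operatorname{Lie}B_0$ a Borel of $\g_{\ol{0}}$, and all hyperborels of $\g$ are conjugate under $G_0$; fix such a $B$. Since $X$ is homogeneous, $\rho_a(\g)\to T_xX$ is surjective at every $x\in X(\C)$; being even and degree-preserving, this is equivalent to surjectivity of its even part $\rho_a(\g_{\ol{0}})\to T_xX_0$ and of its odd part $\rho_a(\g_{\ol{1}})\to(T_xX)_{\ol{1}}$ separately, so in particular the odd part is always surjective. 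Because $\g_{\ol{1}}\subseteq\b$, the subspace $\rho_a(\b)$ contains all of $\rho_a(\g_{\ol{1}})$; hence the image of $\rho_a(\b)\to T_xX$ has odd part all of $(T_xX)_{\ol{1}}$ and even part exactly the image of $\rho_a(\b_{\ol{0}})\to T_xX_0$. Therefore $\rho_a(\b)\to T_xX$ is surjective for some $x$ if and only if $\rho_a(\b_{\ol{0}})\to T_xX_0$ is surjective for some $x$, i.e.\ if and only if $B_0$ has an open orbit on $X_0$. Since $X_0$ is $G_0$-spherical, some Borel of $G_0$ has an open orbit on $X_0$; as all Borels of $G_0$ are conjugate, $B_0$ itself has one (moving the base point along the orbit accordingly). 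Then $\rho_a(\b)\to T_xX$ is onto for a suitable $x$, so $a_x|_B$ is a submersion and $X$ is $B$-spherical.

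The argument is purely formal once one grants the identification $(T_xX)_{\ol{0}}=T_xX_0$ together with the compatibility of the two orbit-map differentials, and I expect that bookkeeping — not any substantive obstacle — to be the only delicate part. The real content is \cref{borel_in_graded_case}: in the group-graded case a hyperborel contains the entire odd part $\g_{\ol{1}}$, so there is no odd-direction obstruction to sphericity beyond what the $G_0$-action on $X_0$ already controls.
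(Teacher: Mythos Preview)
Your proof is correct and follows essentially the same approach as the paper. Both arguments hinge on \cref{borel_in_graded_case}: since every hyperborel has the form $\b=\b_{\ol{0}}\oplus\g_{\ol{1}}$, the odd part of the orbit differential is automatically surjective by homogeneity, reducing sphericity of $X$ to sphericity of $X_0$. The only cosmetic difference is that the paper writes $X=G/K$ and phrases the condition as $\b+\k=\g\iff\b_{\ol{0}}+\k_{\ol{0}}=\g_{\ol{0}}$, whereas you work pointwise with the tangent map $\rho_a(\b)\to T_xX$ and the identification $(T_xX)_{\ol{0}}=T_xX_0$; these are the same computation in two languages.
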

\begin{proof}
	If $X=G/K$, then we want to determine when $\k=\operatorname{Lie}K$ has a complementary hyperborel subalgebra in $\g=\operatorname{Lie}G$.  By \cref{borel_in_graded_case}, the hyperborels of $\g=\operatorname{Lie}G$ are all of the form $\b_{\ol{0}}\oplus\g_{\ol{1}}$ for a Borel subalgebra $\b_{\ol{0}}$ of $\g_{\ol{0}}$.  Thus it is equivalent to find a Borel subalgebra $\b_{\ol{0}}$ complementary to $\k_{\ol{0}}$ in $\g_{\ol{0}}$.  Since $X_0=G_0/K_0$, this completes the proof.
\end{proof}

\begin{prop}\label{graded_homog_spher}
	If $X$ is a homogeneous spherical $G$-supervariety, then there exists a grading of $X$ for which $\C[X]_{0}=\operatorname{soc}\C[X]$.  In particular, if $B$ is a hyperborel of $G$, then $\Lambda_{B}^+(X)=\Lambda_{B_0}^+(X_0)$.
\end{prop}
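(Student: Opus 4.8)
The plan is to fix, via \cref{graded_action_graded_case}, a grading of $X$ for which the action of $G=\operatorname{gr}G$ is graded, so that $\OO_X$ carries a $\Z$-grading with $\C[X]_0=\Gamma(X_0,\OO_{X_0})=\C[X_0]$; the claim is then that $\operatorname{soc}\C[X]$ equals this degree-$0$ piece. Throughout I use that $\g_{\ol 0}$ acts on $\C[X]$ with degree $0$ and $\g_{\ol 1}$ with degree $-1$, and that---since $G$ is group-graded, as in the proof of \cref{graded_restriction_injective}---every simple $G$-submodule of any $G$-module is inflated from $G_0$, hence annihilated by $\g_{\ol 1}$. The inclusion $\C[X]_0\subseteq\operatorname{soc}\C[X]$ is the easy one: because $\g_{\ol 1}$ has degree $-1$ and $\C[X]_{-1}=0$, the $G$-action on $\C[X]_0$ is inflated from a $G_0$-action, which is semisimple since $G_0$ is reductive, and an inflation of a semisimple $G_0$-module is a semisimple $G$-module.

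For the reverse inclusion I would first treat $X=G$ with its left-translation action, whose infinitesimal action is by the right-invariant vector fields $u_L$. A simple $G$-submodule $M\subseteq\C[G]$ is killed by $\g_{\ol 1}$, hence consists of functions invariant under left translation by the odd normal subgroup $A=\exp(\g_{\ol 1})$, that is, of functions pulled back along $G\to A\backslash G\cong G_0$; under the Koszul grading these are exactly $\C[G]_0$, so $\operatorname{soc}\C[G]\subseteq\C[G]_0$. For general homogeneous $X$ and any $x\in X(\C)$, the orbit map $a_x\colon G\to X$ is a submersion, so \cref{submersion_injec} makes $a_x^*\colon\OO_X\to(a_x)_*\OO_G$ injective; taking global sections gives an injection $a_x^*\colon\C[X]\hookrightarrow\C[G]$ that is $G$-equivariant for left translation on $\C[G]$, and is graded because $a_x=\operatorname{gr}a\circ(\id_G\times i_x)$ with $\operatorname{gr}a$ grading-preserving and $i_x$ factoring through $X_0\hookrightarrow X$ (whose pullback annihilates $(\OO_X)_{\ge 1}$). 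Hence $a_x^*$ carries the semisimple module $\operatorname{soc}\C[X]$ into $\operatorname{soc}\C[G]\subseteq\C[G]_0$, and injectivity together with gradedness of $a_x^*$ forces $\operatorname{soc}\C[X]\subseteq\C[X]_0$.

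For the ``in particular'', \cref{borel_in_graded_case} gives that the unipotent radical of $B$ is $N=N_0\ltimes A$ with $\n=\n_{\ol 0}\oplus\g_{\ol 1}$, so $\C[X]^N=\C[X]^{N_0}\cap\ker(\g_{\ol 1}|_{\C[X]})$; the same orbit-map transport, now applied to $\ker(\g_{\ol 1})$ (which on $\C[G]$ equals $\C[G]_0$ by the computation above), yields $\ker(\g_{\ol 1}|_{\C[X]})=\C[X]_0=\C[X_0]$, whence $\C[X]^N=\C[X_0]^{N_0}$ with matching $T$-actions and therefore $\Lambda_B^+(X)=\Lambda_{B_0}^+(X_0)$. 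The delicate point is the transport argument: one must verify simultaneously that $a_x^*$ is injective, $G$-equivariant, and compatible with the two $\Z$-gradings, as this is what lets the explicit descriptions of $\operatorname{soc}\C[G]$ and of $\ker\g_{\ol 1}$ on $\C[G]$ be pushed from the group to an arbitrary homogeneous $X$.
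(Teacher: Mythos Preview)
Your proof is correct, and it takes a genuinely different route from the paper's. The paper's argument is very short: it notes (as you do) that $\C[X]_0\subseteq\C[X]^{\g_{\ol 1}}=\operatorname{soc}\C[X]$, and for the reverse inclusion it simply invokes \cref{graded_restriction_injective}, which says that $i_X^*\colon\operatorname{soc}\C[X]\to\C[X_0]$ is injective; since $i_X^*$ already restricts to an isomorphism on $\C[X]_0$, equality follows. That earlier proposition is where sphericity enters: a $B$-highest weight vector in the socle must be non-nilpotent, hence cannot restrict to zero on $X_0$.

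Your approach instead bypasses \cref{graded_restriction_injective} entirely. By computing $\ker(\g_{\ol 1})=\C[A\backslash G]=\C[G_0]=\C[G]_0$ on the group and then transporting via the graded, injective, $G$-equivariant orbit map $a_x^*$, you obtain $\operatorname{soc}\C[X]\subseteq\C[X]_0$ directly. The upshot is that your argument uses only homogeneity, not sphericity, for the socle identity; you have in fact proved a slightly stronger statement than the paper claims. (Sphericity is still needed if one wants the ``in particular'' clause to have content, since $\Lambda_B^+(X)$ is only interesting when $X$ is spherical, but the equality $\C[X]^N=\C[X_0]^{N_0}$ holds for any homogeneous $X$.) The paper's route is shorter because it leverages a previously proven result; yours is more self-contained and reveals that the socle identity is really a feature of homogeneity for group-graded $G$. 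One small redundancy: once you know $\operatorname{soc}\C[X]=\C[X]_0$, the identity $\ker(\g_{\ol 1}|_{\C[X]})=\C[X]_0$ needed for the ``in particular'' is immediate from $\ker(\g_{\ol 1})=\operatorname{soc}\C[X]$, so the second invocation of the orbit-map transport is unnecessary.
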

\begin{proof}
	By \cref{graded_action_graded_case}, there exists a grading of $X$ for which the action of $G$ is graded.  With respect to this action, $\g_{\ol{1}}$ acts by degree $-1$ derivations on $\OO_{X}$.  Thus $\C[X]_0\sub \C[X]^{\g_{\ol{1}}}=\operatorname{soc} \C[X]$.  On the other hand, by \cref{graded_restriction_injective}, $i_X:\operatorname{soc} \C[X]\to \C[X_0]$ is injective.  Since $i_X: \C[X]_0\to \C[X_0]$ is an isomorphism we must have $ \C[X]_0=\operatorname{soc} \C[X]$.
\end{proof}

In the case of homogeneous affine spaces, we have the following strengthening of \cref{graded_affine_spher_complete_red}.  Note that a homogeneous space $G/K$ is affine if and only if $K_0$ is reductive, i.e.\ $K$ is quasireductive.

\begin{prop}\label{graded_affine_complete_red}
	If $X=G/K$ is a homogeneous affine $G$-space, then the following are equivalent.
	\begin{enumerate}
		\item $X=X_0$.
		\item $ \C[X]$ is completely reducible.
		\item $ \C$ splits of from $ \C[X]$ as a $G$-module.
	\end{enumerate}
\end{prop}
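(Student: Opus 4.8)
The plan is to prove the cycle of implications $(1)\Rightarrow(2)\Rightarrow(3)\Rightarrow(1)$, with essentially all the content in the last step.

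The implication $(1)\Rightarrow(2)$ is the same observation used in the proof of \cref{graded_affine_spher_complete_red}: if $X=X_0$ then $\OO_X$ is purely even, so $\TT_X$ has no odd part, $\rho_a(\g_{\ol{1}})=0$, the odd part of $G$ acts trivially, and the $G$-action on $\C[X]$ factors through $G\to G_0$; since $G_0$ is reductive and $\C[X]$ is locally finite, it is completely reducible. For $(2)\Rightarrow(3)$, note that $\C\cdot 1\subseteq\C[X]$ is a (trivial) $G$-submodule, and in a completely reducible module every submodule is a direct summand, so $\C$ splits off.

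For $(3)\Rightarrow(1)$ I would argue via \cref{affine_stable_ideals}: since $X=G/K$ is homogeneous, the only $G$-stable ideals of $\C[X]$ are $0$ and $\C[X]$. By \cref{graded_action_graded_case} the space $X$ is graded and the action is graded, so fix a $G_0$-equivariant grading $\C[X]=\bigoplus_{i\ge 0}\C[X]_i$ with $\C[X]_0=\C[X_0]$; then $\JJ:=\ker i_X^{*}=\bigoplus_{i\ge 1}\C[X]_i$, and with respect to this grading $\g_{\ol{1}}$ acts by derivations of degree $-1$. Set $I:=\g_{\ol{1}}\cdot\C[X]_1\subseteq\C[X]_0$. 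Since $v(\C[X]_0)=0$ for $v\in\g_{\ol{1}}$, the Leibniz rule makes each operator $v\colon\C[X]_1\to\C[X]_0$ a $\C[X_0]$-module map, so $I$ is an ideal of the coordinate ring $\C[X_0]$. Now $\JJ':=\JJ+I\cdot\C[X]$ is a $G$-stable ideal of $\C[X]$: it is $G_0$- and $\g_{\ol{0}}$-stable because the grading is $G_0$-equivariant and $\g_{\ol{0}}$ preserves degree, and it is $\g_{\ol{1}}$-stable because $\g_{\ol{1}}\C[X]_i\subseteq\C[X]_{i-1}\subseteq\JJ$ for $i\ge 2$, $\g_{\ol{1}}\C[X]_1=I$, and $\g_{\ol{1}}(I\cdot\C[X])\subseteq I\cdot\C[X]$ as $\g_{\ol{1}}$ kills $I\subseteq\C[X]_0$. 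By \cref{affine_stable_ideals}, $\JJ'=0$ or $\JJ'=\C[X]$. If $\JJ'=0$ then $\JJ=0$, i.e. $\C[X]=\C[X_0]$, i.e. $X=X_0$, which is $(1)$. If $\JJ'=\C[X]$, then comparing degree-$0$ parts (using $(I\cdot\C[X])_0=I\cdot\C[X_0]=I$) gives $1\in I=\g_{\ol{1}}\C[X]_1$, say $1=\sum_k v_k(f_k)$ with $v_k\in\g_{\ol{1}}$, $f_k\in\C[X]_1$. Given a $G$-module splitting $\C[X]=\C\cdot 1\oplus W$, as provided by $(3)$, write $f_k=\lambda_k\cdot 1+w_k$ with $\lambda_k\in\C$, $w_k\in W$; then $v_k(f_k)=v_k(w_k)$ because $v_k(1)=0$, and $v_k(w_k)\in W$ because $W$ is $\g$-stable, so $1=\sum_k v_k(w_k)\in W$, contradicting $W\cap\C\cdot 1=0$. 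Hence under $(3)$ only the first case can occur, and $X=X_0$.

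The main obstacle is pinning down the correct $G$-stable ideal in $(3)\Rightarrow(1)$. The obvious candidate $\ker i_X^{*}$ fails to be $\g_{\ol{1}}$-stable precisely because $\g_{\ol{1}}$ can carry the bottom piece $\C[X]_1$ into the constants, so one must enlarge it by $I=\g_{\ol{1}}\C[X]_1$; the two points requiring care are that $\JJ+I\cdot\C[X]$ is then genuinely $G$-stable, and that its degree-$0$ component is exactly $I$, so that $1\in\JJ'$ really forces $1\in\g_{\ol{1}}\C[X]$ and thereby destroys any $G$-splitting of $\C$. Everything else — the $G_0$-equivariant graded action, the degree count for $\g_{\ol{1}}$, and the two easy implications — is supplied directly by the results already established.
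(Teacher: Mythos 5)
Your proof is correct, and for the key implication $(3)\Rightarrow(1)$ it takes a genuinely different route from the paper. The paper isolates a more general lemma (\cref{graded_affine_ideal_grading}): for any odd abelian ideal $\l\sub\g_{\ol{1}}$ and quasireductive $K$, the trivial module splits off from $\C[G/K]$ only if $\l\sub\k$. That lemma is proved by building an auxiliary finite-dimensional $\g$-module $V=L\otimes L^*\oplus\Pi L^*$ containing a non-split trivial submodule $\C\langle v_L\rangle$, producing a coinvariant $\psi\in(V^*)^{\k}$ supported on the trace form of $R\otimes R^*$, and pushing $V$ into $\C[G/K]$ by Frobenius reciprocity so that the image of $v_L$ is a nonzero constant realized as $\sum_i u_i\cdot\Psi(\varphi_i)$ with $u_i\in\g_{\ol{1}}$; applying this with $\l=\g_{\ol{1}}$ gives $(3)\Rightarrow(1)$. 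You instead exploit homogeneity directly through \cref{affine_stable_ideals}: using the $G_0$-equivariant graded action from \cref{graded_action_graded_case} you manufacture the $G$-stable ideal $\JJ+I\cdot\C[X]$ with $I=\g_{\ol{1}}\cdot\C[X]_1$, and the dichotomy ``$0$ or everything'' forces either $X=X_0$ or $1\in\g_{\ol{1}}\cdot\C[X]_1$, the latter being incompatible with a $G$-stable complement to the constants since $\g_{\ol{1}}$ kills $1$. Your verifications (that $I$ is an ideal of $\C[X_0]$ via the degree $-1$ Leibniz computation, that $\JJ+I\cdot\C[X]$ is $G_0$- and $\g_{\ol{1}}$-stable, and that its degree-zero piece is exactly $I$) all hold. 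What each approach buys: the paper's lemma is stronger and reusable --- it detects the precise condition $\l\sub\k$ for an arbitrary odd abelian ideal and does not pass through the coordinate ring of $X$ at all --- whereas your argument is more elementary and self-contained, avoiding the auxiliary module and Frobenius reciprocity entirely, but is tailored to $\l=\g_{\ol{1}}$ and to the homogeneous affine setting where \cref{affine_stable_ideals} applies.
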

Before proving this, we first state a lemma.
\begin{lemma}\label{graded_affine_ideal_grading}
	Suppose that $G$ is quasireductive and that $\g=\operatorname{Lie}(G)$ has an odd abelian ideal $\l\sub\g_{\ol{1}}$.  Then if $K\sub G$ is a quasireductive subgroup, $ \C$ splits off from $ \C[G/K]$ only if $\l\sub\k=\operatorname{Lie}(K)$.
\end{lemma}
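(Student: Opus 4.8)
The plan is to prove the contrapositive: assuming $\l\not\subseteq\k$, I will produce $v_i\in\l$ and functions $s_i$ on $X:=G/K$ with $\sum_i v_i\cdot s_i=1$, which immediately rules out a $G$-equivariant splitting. First I set up a convenient model for $\C[X]$. Since $X$ is homogeneous it is smooth (by the remark following \cref{submersion_injec}), hence graded, and because $G$ is group-graded \cref{graded_action_graded_case} lets me fix a grading $X\cong(|X|,\Lambda^\bullet\NN_X)$ for which the action of $G$ is graded. Then $\C[X]=\bigoplus_{j\ge0}\Gamma(X_0,\Lambda^j\NN_X)$ with $\C[X]_0=\C[X_0]$ and $\C[X]_1=\Gamma(X_0,\NN_X)$, and every $v\in\g_{\ol1}$, in particular every $v\in\l$, acts by a degree $-1$ derivation; so $\l\cdot\C[X]_0=0$ and $\l\cdot\C[X]_1\subseteq\C[X]_0=\C[X_0]$. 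Since $K$ is quasireductive, $K_0$ is reductive, so $X_0=G_0/K_0$ is an affine variety, and $\NN_X$ is a $G_0$-equivariant coherent sheaf on it.

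The action of $\l$ on degree-one functions assembles into a morphism of $\OO_{X_0}$-modules $\mu\colon\underline{\l}\otimes_{\C}\NN_X\to\OO_{X_0}$ (here $\underline\l$ is the constant sheaf; the $\OO_{X_0}$-linearity uses $\l\cdot\C[X]_0=0$). I claim $\mu$ is surjective. Over a $\C$-point $y=g_0\cdot x_0$ of $X_0$, with $g_0\in G_0$ and $x_0=eK$, the fibre of $\mu$ is the composite $\l\otimes(\NN_X)_y\to(T_yX)_{\ol1}\otimes(\NN_X)_y\to\C$ of the evaluation map $\rho_a$ of \cref{orbit_diff} with the canonical perfect pairing $(T_yX)_{\ol1}\otimes(\m_y/\m_y^2)_{\ol1}\to\C$. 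Its kernel on the $\l$-factor is $\l\cap\operatorname{Lie}\Stab_G(y)=\l\cap\Ad(g_0)\k=\Ad(g_0)(\l\cap\k)$, where I have used crucially that $\l$ is an \emph{ideal}, so $\Ad(g_0)\l=\l$. Thus this kernel has dimension $\dim(\l\cap\k)$ for every $y$, so if $\l\not\subseteq\k$ then $\rho_a\colon\l\to(T_yX)_{\ol1}$ is nonzero at every point, and since $\OO_{X_0}$ is a line bundle $\mu$ is surjective. As $X_0$ is affine, taking global sections preserves surjectivity, so $\l\otimes_\C\C[X]_1=\Gamma(X_0,\underline\l\otimes\NN_X)\twoheadrightarrow\Gamma(X_0,\OO_{X_0})=\C[X_0]$; in particular there are $v_i\in\l$ and $s_i\in\C[X]_1$ with $\sum_i v_i\cdot s_i=1$.

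Finally, suppose $\C$ splits off from $\C[X]$ as a $G$-module, so there is a $G$-module map $p\colon\C[X]\to\C$ with $p(1)=1$. Then $p$ is in particular a map of $\g$-modules, and since $\g$ acts trivially on $\C$ we get
\[
1=p(1)=\sum_i p(v_i\cdot s_i)=\sum_i v_i\cdot p(s_i)=0,
\]
a contradiction. Hence $\C$ does not split off from $\C[G/K]$, which is exactly the contrapositive of the statement; equivalently, if $\C$ splits off then $\l\subseteq\k$.

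I expect the only substantive point to be the surjectivity of $\mu$ in the second paragraph. This is where the ideal hypothesis on $\l$ is indispensable: it forces the ``differential of the $\l$-action'' to have locally constant rank on $X_0$, so that its nonvanishing at one point (guaranteed by $\l\not\subseteq\k$) propagates over the whole affine variety, allowing the constant function $1$ to be written as a sum of $\l$-derivatives. The grading bookkeeping of the first paragraph and the one-line contradiction of the third are routine.
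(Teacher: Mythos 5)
Your argument is correct and lands on the same punchline as the paper --- exhibiting the constant function $1$ as an element of $\l\cdot\C[G/K]$, which no $G$-equivariant projection $\C[G/K]\to\C$ can tolerate --- but it gets there by a genuinely different route. The paper never touches the geometry of $G/K$: it builds an auxiliary $\g$-module $V=L\otimes L^*\oplus\Pi L^*$ (with $L=\l_{\ol{1}}$) in which the trivial submodule $\C v_L$ visibly fails to split off because $v_L=\sum_i u_i\cdot\varphi_i$, then uses the trace form on $R\otimes R^*$ (for $R$ a $\k_{\ol{0}}$-complement to $\k\cap\l$ in $\l$) to manufacture a $\k$-coinvariant of $V$, and transports the offending relation into $\C[G/K]$ by Frobenius reciprocity. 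You instead produce the relation $1=\sum_i v_i\cdot s_i$ directly from the geometry of the homogeneous space: the ideal property of $\l$ gives $\l\cap\Ad(g_0)\k=\Ad(g_0)(\l\cap\k)$ (using connectedness of $G_0$), so the fibrewise rank of the $\l$-action is constant on $X_0$; Nakayama then gives surjectivity of $\mu$, and affineness of $X_0$ (Matsushima, since $K_0$ is reductive) lets you pass to global sections. Both the fibre computation and the closing contradiction are sound. What each approach buys: yours makes transparent \emph{why} the hypothesis ``$\l$ is an ideal'' matters (constant rank of the infinitesimal action over the orbit), while the paper's is purely representation-theoretic and needs no smoothness, grading, or sheaf theory.

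One caveat on generality. Your first paragraph invokes the standing assumption that $G$ is group-graded (via \cref{graded_action_graded_case} and the degree $-1$ action of $\g_{\ol{1}}$ on $\OO_{\operatorname{gr}X}$). That assumption is in force in the subsection where the lemma sits and covers its only use in \cref{graded_affine_complete_red}, so your proof is adequate for the paper's purposes. But the lemma's own hypotheses --- quasireductive $G$ with an odd abelian ideal $\l\sub\g_{\ol{1}}$ --- allow non-group-graded $G$ (e.g.\ a product of a group-graded factor with $\o\s\p(1|2)$, taking $\l$ to be the odd part of the first factor), and the paper's proof covers that case with no grading of $G$ or of $G/K$. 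Repairing your argument there is not immediate: without the grading you would replace $\C[X]_1$ by $\JJ_X/\JJ_X^2$ and only obtain $v(s)\equiv 1 \bmod \JJ_X$, i.e.\ $1$ plus a nilpotent correction, which does not directly contradict a splitting. So as written your proof establishes a (sufficient for the paper) special case of the stated lemma.
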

\begin{proof}
	Suppose that $\l$ is not contained in $\k$.  Let $\m=\k\cap\l$, and let $\r$ be a $\k_{\ol{0}}$-invariant complement to $\m$ in $\l$, where we are using that $K_0$ is reductive.  Write $L$,$M$, and $R$ for the purely even vector spaces with $L=\l_{\ol{1}},M=\m_{\ol{1}}$, and $R=\r_{\ol{1}}$.  We may naturally view $L$ as a $\g_{\ol{0}}$-module according to the restriction of the adjoint action of $\g_{\ol{0}}$ to $\l$, using that $\l$ is an ideal of $\g$.
	
	Now consider the following $\g$-module $V$.  As a $\g_{\ol{0}}$-module, $V=L\otimes L^*\oplus\Pi L^*$.  Choose a $\g_{\ol{0}}$-invariant complement $\l'$ to $\l$ in $\g_{\ol{1}}$.  Then we say that for $u\in\l'$, $u$ acts by 0 on $V$, and for $u\in\l$, $u$ acts by $0$ on $V_{0}=L\otimes L^*$, while for $\varphi\in V_{\ol{1}}=\Pi L^*$, we set $u\cdot \varphi:=u\otimes\varphi\in V_{\ol{0}}$.  Then this defines a representation of $\g$ on $V$.  Further, the span of the element $v_L\in V_{\ol{0}}=L\otimes L^*$ which correspond to the identity map on $L$ defines an even trivial subrepresentation $ \C\langle v_L\rangle$ of $V$.  This subrepresentation does not split off of $V$, as we see that if $u_1,\dots,u_n$ is a basis of $L$ and $\varphi_1,\dots,\varphi_n$ is a the parity shift of a dual basis in $\Pi L^*$, then we have the following equation in $V$:
	\[
	\sum\limits_{i=1}^{n}u_i\cdot\varphi_i=\sum\limits_{i=1}^n u_i\otimes\varphi_i=v_L
	\]
	
	Consider the element $\psi\in V^*$ corresponding to the trace form on $R\otimes R^*\sub L\otimes L^*$.  Then as an element of $V^*$, $\psi$ is $\k_{\ol{0}}$-invariant since $R$ is a $\k_{\ol{0}}$-submodule. If $u\in\k_{\ol{1}}$ and $\varphi\in V_{\ol{1}}$, then $u\cdot\varphi=u\otimes\varphi\in M\otimes L^*$, and thus $\psi(u\otimes \varphi)=0$.  It follows that $\psi\in(V^*)^{\k}$, i.e.\ it defines an even coinvariant of $V$, so by Frobenius reciprocity it defines a $G$-module morphism $\Psi:V\to \C[G/K]$.  Further, since $\psi(v_L)\neq0$ and $v_L$ is $G$-fixed, $\Psi(v_L)$ is a non-zero constant function on $G/K$.  We see that
	\[
	\sum\limits_{i=1}^n u_i\cdot\Psi(\varphi_i)=\Psi\left(\sum\limits_{i=1}^{n}u_i\cdot\varphi_i\right)=\Psi(v_L).
	\]
	It follows that $\C$ does not split off from $ \C[G/K]$, and we are done.
\end{proof}

Now we prove \cref{graded_affine_complete_red}.

\begin{proof}
	Since $\g_{\ol{1}}$ if an odd abelian ideal of $\g$, if $K\sub G$ is a quasireductive subgroup, $\C$ splits off from $ \C[G/K]$ only if $\g_{\ol{1}}\sub\k$ by \cref{graded_affine_ideal_grading}, and in this case $G/K$ is a purely even variety.  This shows $(3)\implies(1)$.  Both $(1)\implies(2)$ and $(2)\implies(3)$ are obvious.
\end{proof}

\section{Appendix: Action of Lie Superalgebras}\label{app_superalg_actions}
Let $\g$ be an arbitrary Lie superalgebra and $X$ a supervariety.  An action of $\g$ on $X$ is a Lie superalgebra homomorphism $\g\to\Gamma(X,\TT_X)$.

\begin{definition}\label{open_alg_orbit}
	If $\g$ acts on $X$, then we say $\g$ has an open orbit on $X$ if there exists a point $x\in X(\C)$ such that the natural restriction map $\g\to T_xX$ is a surjection.  In this case, the locus of points where $\g\to T_xX$ is surjective is open, and we call this open set an open orbit of $\g$.  We say $X$ is a homogeneous $\g$-supervariety if all of $X$ is an open orbit.
\end{definition}

\begin{prop}\label{ideal_orbit_alg}
	Suppose that $X$ is a homogeneous $\g$-supervariety.  If $\LL$ is a $\g$-equivariant line bundle on $X$, then a nonzero $\g$-submodule of $\Gamma(X,\LL)$ generates $\LL$. In particular, if $X$ is affine, $\C[X]$ has no non-trivial $\g$-invariant ideals.
\end{prop}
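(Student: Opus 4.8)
The plan is to repeat the proof of \cref{homog_line_bundle_char}, with the supergroup $G$ replaced throughout by the Lie superalgebra $\g$: as remarked in \cref{algebra_action_remark}, that argument uses only the infinitesimal action on $\OO_X$ together with surjectivity of the evaluation map $\g\to T_xX$ at points of the orbit, so it transfers verbatim. The ideal statement then follows by an affine reduction. I sketch the steps.

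Concretely: suppose a nonzero $\g$-submodule $V\sub\Gamma(X,\LL)$ fails to generate $\LL$. Surjectivity of $V\otimes_\C\OO_X\to\LL$ may be checked on stalks, and $\LL_x$ is free of rank one over $\OO_{X,x}$, so Nakayama's lemma produces $x\in X(\C)$ with $V\sub\m_x\LL_x$. Since $\OO_{X,x}$ is Noetherian local (a localization of a finitely generated affine chart of $X$), the Krull intersection theorem gives $\bigcap_k\m_x^k\LL_x=0$; as $V\neq 0$, there is a largest $n\geq 1$ with $V\sub\m_x^n\LL_x$. Pick $s\in V$ with $s\notin\m_x^{n+1}\LL_x$, a local trivialization $\psi\colon\LL_x\cong\OO_{X,x}$, and set $f=\psi(s)\in\m_x^n\setminus\m_x^{n+1}$.

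The one substantive step, taken exactly as in \cref{homog_line_bundle_char}, is to find $u\in\g$ with $u(f)\in\m_x^{n-1}\setminus\m_x^n$. Homogeneity forces $X$ to be smooth at $x$ (cf.\ \cref{smoothness}), so the associated graded of $\OO_{X,x}$ for the $\m_x$-adic filtration is a free super-commutative algebra on $\m_x/\m_x^2$; each $u\in\g$ sends $\m_x^k$ into $\m_x^{k-1}$ and hence induces a degree $-1$ derivation of this graded algebra, and surjectivity of $\g\to T_xX=(\m_x/\m_x^2)^*$ shows that every degree $-1$ derivation arises this way. Since the class of $f$ lies in degree $n\geq 1$ and is nonzero, a characteristic-zero Euler identity gives a coordinate derivation not annihilating it, hence the required $u$. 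Now the Leibniz rule $u_\psi(f)=u(f)+(-1)^{\ol u\ol f}fu_\psi(1)$ together with $fu_\psi(1)\in\m_x^n$ forces $u_\psi(f)\in\m_x^{n-1}\setminus\m_x^n$, so $u\cdot s=\psi^{-1}(u_\psi(f))\in\m_x^{n-1}\LL_x\setminus\m_x^n\LL_x$. But $u\cdot s\in V$, since $V$ is $\g$-stable, contradicting the maximality of $n$; hence $V$ generates $\LL$.

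For the last assertion, let $X$ be affine and $I\sub\C[X]$ a nonzero $\g$-invariant ideal. Then $I$ is a nonzero $\g$-submodule of $\Gamma(X,\OO_X)$ for the tautological $\g$-linearization of $\OO_X$, so by the first part $I$ generates $\OO_X$; on the affine scheme $X=\Spec\C[X]$ this says precisely that the ideal of $\C[X]$ generated by $I$---namely $I$ itself---equals $\C[X]$, so $I=\C[X]$. Thus $\C[X]$ has no nontrivial $\g$-invariant ideals. The only genuinely nonformal ingredient is the production of $u$ in the third paragraph, where smoothness at $x$ and characteristic zero are used; everything else (Nakayama, Krull intersection, the Leibniz rule, and the affine dictionary for ``generates'') is routine, and indeed this step is already carried out inside the proof of \cref{homog_line_bundle_char}.
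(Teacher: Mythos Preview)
Your proposal is correct and follows exactly the paper's own approach: the paper's proof of this proposition is literally ``See \cref{algebra_action_remark}'', i.e.\ rerun the argument of \cref{homog_line_bundle_char} with $\g$ in place of $G$, which is precisely what you do. In fact you supply more detail than the paper does for the key step---the existence of $u\in\g$ with $u(f)\in\m_x^{n-1}\setminus\m_x^n$---by invoking smoothness at $x$ and the Euler identity on the associated graded, whereas the paper simply asserts this.
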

\begin{proof}
	See \cref{algebra_action_remark}.
\end{proof}
Now assume that $\g$ is quasireductive.
\begin{definition}
		A $\g$-supervariety $X$ is said to be spherical if there exists a hyperborel subalgebra $\b$ in $\g$ such that $\b$ has an open orbit on $X$.  In this case we say that $X$ is $\b$-spherical.
\end{definition}
\begin{remark}If $G$ is quasireductive and acts on a supervariety $X$, and $B$ is a hyperborel subgroup of $G$, then $X$ is $B$-spherical if and only if $X$ is $\b$-spherical for the induced action of $\g$ on $X$.
\end{remark}
We may now slightly extend our results on spherical supervarieties.
\begin{thm}\label{alg_action_spher}
	Let $X$ be a $\g$-supervariety, $\b$ a hyperborel subalgebra of $\g$ and $\h_{\ol{0}}\sub\b$ a Cartan subalgebra of $\g_{\ol{0}}$.  If $X$ is $\b$-spherical then for a $\b$-equivariant line bundle $\LL$ on $X$, $\Gamma(X,\LL)^{(\b)}$ is a multiplicity-free $\h_{\ol{0}}$-module and if $s\in\Gamma(X,\LL)^{(\b)}$ is non-zero then it is non-vanishing.
\end{thm}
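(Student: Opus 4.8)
The plan is to transcribe the proofs behind \cref{homog_line_bundle_char} and \cref{ratl_funct_open_orbit} to the open $\b$-orbit. Write $U\sub X$ for the open $\b$-orbit; by \cref{open_alg_orbit} it is a homogeneous $\b$-supervariety, and restricting the $\b$-linearization of $\LL$ along the open immersion $U\hookrightarrow X$ makes $\LL|_U$ a $\b$-equivariant line bundle on $U$. The one substantive input beyond the definitions is \cref{ideal_orbit_alg} (equivalently \cref{algebra_action_remark}): since $U$ is homogeneous, every nonzero $\b$-stable subspace of $\Gamma(U,\LL|_U)$ generates $\LL|_U$. I will also use that on a supervariety the restriction maps $\Gamma(X,\LL)\to\Gamma(U,\LL|_U)$ and $\Gamma(U,\OO_U)\to\C(X)$ are injective; for the line bundle this follows by trivializing $\LL$ locally and reducing to functions, exactly as in the equivalence $(2)\Leftrightarrow(3)$ of \cref{submersion_injec}.

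First I would settle the non-vanishing assertion. If $s\in\Gamma(X,\LL)^{(\b)}$ is nonzero, then $\C s$ is a one-dimensional $\b$-submodule of $\Gamma(X,\LL)$, and its restriction to $U$ is a nonzero $\b$-stable line in $\Gamma(U,\LL|_U)$; by the homogeneity input this line generates $\LL|_U$. Hence $s\notin\m_x\LL_x$ for every $x\in U(\C)$, i.e.\ $s$ is non-vanishing on the open $\b$-orbit.

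For the multiplicity-free statement it is enough to show that any two nonzero $s,t\in\Gamma(X,\LL)^{(\b)}$ lying in the same $\h_{\ol 0}$-weight space are proportional; here one uses that the $\h_{\ol 0}$-weight of a $\b$-eigensection determines its $\b$-character, since a character of $\b$ annihilates $\n$ (its odd part maps to $0$, and $\n_{\ol 0}=[\b_{\ol 0},\b_{\ol 0}]$). By the non-vanishing already proved, $t|_U$ trivializes $\LL|_U$; choosing the trivialization so that $t|_U\mapsto 1$, the section $s|_U$ corresponds to a non-vanishing function $f\in\Gamma(U,\OO_U)$, and in the notation recalled just before \cref{homog_line_bundle_char} one has $u_\psi(1)=\chi(u)$, where $\chi$ is the common $\b$-character of $s$ and $t$. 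The twisted Leibniz rule then gives, for all $u\in\b$,
\[
\chi(u)\,f = u(f) + (-1)^{\ol u\,\ol f}\,\chi(u)\,f .
\]
If $\chi(u)\neq0$ then $u$ is even, the sign is $+1$, and $u(f)=0$; if $\chi(u)=0$ then the two $\chi(u)f$ terms vanish and again $u(f)=0$. Thus $f\in\Gamma(U,\OO_U)^{\b}$, and the injection $\Gamma(U,\OO_U)^{\b}\hookrightarrow\C(X)^{\b}$ together with $\C(X)^{\b}=\C$ (which holds by $\b$-sphericity and \cref{spher_char}: the trivial character occurs in the multiplicity-free module $\C(X)^{(\b)}$ with multiplicity exactly one) forces $f\in\C$. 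Since $s|_U=f\cdot t|_U\neq0$ we get $f\in\C^{\times}$; by injectivity of restriction $s=ft$ on all of $X$, so $s\in\C t$ (and in passing $f$ is even, so $s$ and $t$ have equal parity).

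The main obstacle is the parity bookkeeping peculiar to hyperborels: one must check carefully that $\b$-characters are detected by $\h_{\ol 0}$, that the sign in the Leibniz rule does not spoil $\b$-invariance of $f$, and that odd sections cause no trouble — the last being handled automatically once $f$ is seen to land in the purely even field $\C(X)^{\b}=\C$. Apart from this, the argument is a routine localization to the open $\b$-orbit of facts already established in the paper.
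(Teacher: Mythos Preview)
Your argument is correct and follows essentially the same route as the paper: restrict to the open $\b$-orbit $U$, invoke \cref{ideal_orbit_alg} to see that a nonzero $\b$-eigensection generates $\LL|_U$ and hence is non-vanishing there, and then show that the ratio of two eigensections of the same weight is a $\b$-invariant function, hence constant. The paper does the last step by passing to an affine open $V\sub U$ (any affine open suffices, since homogeneity is an open condition on points) and observing that $(f-f(x))$ is a proper $\b$-stable ideal of $\C[V]$, hence zero by \cref{ideal_orbit_alg}; your Leibniz-rule computation reaches the same conclusion $f\in\Gamma(U,\OO_U)^{\b}$ and is a perfectly good substitute.

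One small caveat: you justify $\C(X)^{\b}=\C$ by citing \cref{spher_char}, but that proposition (via \cref{ratl_funct_open_orbit}) is stated for actions of a supergroup $B$, whereas the present theorem is purely about a Lie-superalgebra action of $\b$. The statement you need is still true and immediate---pick any affine open $V\sub U$ on which $f$ is regular and run the $(f-f(x))$ ideal argument---but it is cleaner to argue this directly (as the paper does) rather than appeal to \cref{spher_char}. With that adjustment your proof is complete.
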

\begin{proof}
	Suppose that $s\in \Gamma(X,\LL)^{(\b)}$ is a non-zero weight vector of $\b$. If we restrict $s$ to the open orbit $U$ of $\b$, by \cref{ideal_orbit_alg} it must generate $\LL|_U$ since it cannot restrict to zero.  This implies the restriction of $s$ to $U$  must be non-vanishing.
	
	Now if $s_1,s_2\in\Gamma(X,\LL)^{(\b)}$ are non-zero weight vectors for $\b$ of the same weight, then $f=s_1/s_2$ is a rational $\b$-invariant function.  Since $s_2$ is non-vanishing on $U$, $f$ is regular on $U$.  We may assume by further restriction that $U$ affine.  Then since it is $\b$-homogeneous, $\C[U]$ has no nontrivial $\b$-invariant ideals by \cref{ideal_orbit_alg}.  However for $x\in U(\C)$, $(f-f(x))$ will be an invariant ideal which is not equal to $ \C[U]$ since it is contained in $\m_x$.  Therefore $f-f(x)=0$, so $f$ is constant, and thus $s_1$ and $s_2$ are proportional.  This completes the proof.
\end{proof}

\section{Appendix: Smoothness}\label{app_smooth}

Let $X$ be a complex supervariety and let $x\in X(\C)$. We say that $X$ is smooth at $x$ if the natural evaluation map $\TT_{X,x}\to T_xX$ is surjective (see \cref{smooth_rmk}).  We seek to give a list of conditions that are equivalent to this, so as to clarify the existing literature on smoothness of superschemes.  In order to state the characterization, we recall several constructions and definitions.

\begin{itemize}
	\item For a supervariety $X$, write $\Omega_X$ for its sheaf of differentials, which can be defined as the conormal sheaf to $X$ under the diagonal embedding $X\to X\times X$.
	\item For $x\in X(\C)$ we may view $T_xX$ as the affine superspace $\Spec S^\bullet(\m_x/\m_x^2)$.  Define the tangent cone at $x$, $TC_xX$, to be the closed conical subvariety of $T_xX$ given by
	\[
	TC_xX=\Spec\left(\bigoplus\limits_{n\geq 0}\m_x^n/\m_x^{n+1}\right)
	\]
	The derivations in $\TT_{X,x}$ act on both $\C[T_xX]$ and $\C[TC_x]$ by derivations of degree -1, and the action is equivariant with respect to the above closed embedding.  
	\item For a local supercommutative algebra $A$ with unique maximal ideal $\m$, we write $\widehat{A}$ for the completion of $A$ with respect to the $\m$-adic topology. 
	\item Following \cite{schmitt1989regular}, given a superalgebra $A$ we say that an even element $t\in A_{\ol{0}}$ is $A$-regular if the multiplication map by $t$ is injective.  We say an odd element $\xi\in A_{\ol{1}}$ is $A$-regular if the cohomology of the multiplication map by $\xi$ is trivial.  Finally, if $(r_1,\dots,r_k)$ is a sequence of homogeneous elements of $A$, we say the sequence is $A$-regular if $r_i$ is regular in $A/(r_1,\dots,r_{i-1})$.  
\end{itemize}
\begin{definition}
	A local supercommutative algebra $A$ is regular if the unique maximal ideal $\m$ is generated by an $A$-regular sequence.
\end{definition}

\begin{lemma}\label{generic freeness}
	Let $\ol{F}$ be a finitely generated field over $\C$ of transcendence degree $m$, and let $F=\ol{F}[\xi_1,\dots,\xi_n]$ for odd variables $\xi_1,\dots,\xi_n$.  Then $\Omega_{F/\C}$ is a free $F$-module of rank $(m|n)$.  
\end{lemma}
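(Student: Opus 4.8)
The plan is to split the computation into its even and odd parts, using the cotangent exact sequence for the tower $\C\to\ol{F}\to F$. First I would handle the even direction: since $\C$ has characteristic zero, the finitely generated field extension $\ol{F}/\C$ is separably generated, so $\Omega_{\ol{F}/\C}$ is an $\ol{F}$-vector space of dimension equal to $\operatorname{trdeg}_\C\ol{F}=m$, with basis $dt_1,\dots,dt_m$ for any separating transcendence basis $t_1,\dots,t_m$ (a standard fact; see e.g.\ \cite{hartshorne2013algebraic}). In particular $\Omega_{\ol{F}/\C}$ is a free $\ol{F}$-module of rank $(m|0)$, and hence $\Omega_{\ol{F}/\C}\otimes_{\ol{F}}F$ is a free $F$-module of rank $(m|0)$.

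Next I would treat the odd direction, i.e.\ the extension $\ol{F}\subseteq F=\ol{F}[\xi_1,\dots,\xi_n]$. By the universal property of Kähler differentials, an $\ol{F}$-linear superderivation $F\to M$ into an $F$-module $M$ is freely determined by the images in $M_{\ol{1}}$ of the odd generators $\xi_1,\dots,\xi_n$, the super Leibniz rule forcing all remaining values; hence $\Omega_{F/\ol{F}}$ is the free $F$-module on the odd generators $d\xi_1,\dots,d\xi_n$, of rank $(0|n)$. Now consider the (always right exact) cotangent sequence
\[
\Omega_{\ol{F}/\C}\otimes_{\ol{F}}F\xrightarrow{\ \alpha\ }\Omega_{F/\C}\xrightarrow{\ \beta\ }\Omega_{F/\ol{F}}\to 0.
\]
I claim $\alpha$ is injective and the sequence splits. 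Since $F$ is a free $\ol{F}$-module, with homogeneous $\ol{F}$-basis the products $\xi_{i_1}\cdots\xi_{i_k}$ ($i_1<\cdots<i_k$), there is a well-defined even $\C$-linear superderivation $D\colon F\to\Omega_{\ol{F}/\C}\otimes_{\ol{F}}F$ determined by $D|_{\ol{F}}=d_{\ol{F}/\C}$ and $D(\xi_i)=0$, extended by the Leibniz rule. By the universal property of $\Omega_{F/\C}$, $D$ factors as $r\circ d_{F/\C}$ for a unique $F$-linear map $r\colon\Omega_{F/\C}\to\Omega_{\ol{F}/\C}\otimes_{\ol{F}}F$, and evaluating on the generating differentials $d_{F/\C}(a)$ with $a\in\ol{F}$ shows $r\circ\alpha=\id$. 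Thus
\[
0\to\Omega_{\ol{F}/\C}\otimes_{\ol{F}}F\xrightarrow{\ \alpha\ }\Omega_{F/\C}\xrightarrow{\ \beta\ }\Omega_{F/\ol{F}}\to 0
\]
is split exact, so $\Omega_{F/\C}\cong\bigl(\Omega_{\ol{F}/\C}\otimes_{\ol{F}}F\bigr)\oplus\Omega_{F/\ol{F}}$.

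Combining the two steps, $\Omega_{F/\C}$ is the direct sum of a free $F$-module of rank $(m|0)$ and a free $F$-module of rank $(0|n)$, hence free of rank $(m|n)$, with basis $dt_1,\dots,dt_m,d\xi_1,\dots,d\xi_n$. The step I expect to require the most care is the split exactness in the middle paragraph: one must check that prescribing a superderivation by its values on $\ol{F}$ and on the odd generators genuinely produces a well-defined map and that the induced $r$ retracts $\alpha$. This is the super analogue of the classical identity $\Omega_{A[x_1,\dots,x_n]/\C}\cong(\Omega_{A/\C}\otimes A[x_1,\dots,x_n])\oplus\bigoplus_iA[x_1,\dots,x_n]\,dx_i$; the only new feature is the sign bookkeeping forced by the odd parity of the $\xi_i$, which is harmless, so I anticipate no real obstruction beyond being careful with those signs.
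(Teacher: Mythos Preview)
Your proof is correct and follows essentially the same route as the paper: both use the cotangent sequence for $\C\to\ol{F}\to F$ and identify the outer terms as free of ranks $(m|0)$ and $(0|n)$. The only minor difference is in how the splitting is obtained: the paper splits the surjection $\beta$ (automatic since $\Omega_{F/\ol{F}}$ is free) and then checks $F$-linear independence of $dt_1,\dots,dt_m$ using the derivations $\partial_{t_1},\dots,\partial_{t_m}$, whereas you construct a retraction of $\alpha$ directly via the derivation $D$; these are dual versions of the same idea, and your packaging is slightly cleaner since it yields injectivity of $\alpha$ and the splitting in one stroke.
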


\begin{proof}
	We have the short exact sequence
	\[
	F\otimes_{\ol{F}}\Omega_{\ol{F}/\C}\to\Omega_{F/\C}\to\Omega_{F/\ol{F}}\to0.
	\]
	Since $\Omega_{F/\ol{F}}$ is a free $F$-module of rank $(0|n)$ with generators $d\xi_1,\dots,d\xi_n$, the last map splits which implies that $d\xi_1,\dots,d\xi_n$ generate a free summand of $\Omega_{F/\C}$ of rank $(0|n)$.  We know that $\Omega_{\ol{F}/\C}$ is a free $\ol{F}$-module of rank $(m|0)$ with generators $dt_1,\dots,dt_m$, where $t_1,\dots,t_m$ form a transcendence basis of $\ol{F}$ over $\C$.  Hence $\Omega_{F/\C}$ is generated by $dt_1,\dots,dt_m,d\xi_1,\dots,d\xi_n$, and it suffices to show that $dt_1,\dots,dt_m$ are $F$-linearly independent.
	
	However if we compute $\ul{Hom}_F(\Omega_{F/\C},F)$ we get the module of $\C$-linear derivations of $F$, which contains a free submodule of rank $(m|0)$ generated by $\d_{t_1},\dots,\d_{t_m}$.  These may be used to show that $dt_1,\dots,dt_m$ are $F$-linearly independent, and we are done.
	
\end{proof}

\begin{thm}\label{smoothness} For a supervariety $X$ and closed point $x\in X(\C)$, let $A:=\OO_{X,x}$ with maximal ideal $\m=\m_x$.  Let $t_1,\dots,t_m$, $\xi_1,\dots,\xi_n\in\m$  project to a homogeneous basis of $\m/\m^2$, where $\ol{t_i}=\ol{0}$ and $\ol{\xi_i}=\ol{1}$.  Then the following are equivalent.
	\begin{enumerate}
		\item $\widehat{A}\cong \C\llbracket t_1,\dots,t_m,\xi_1,\dots,\xi_n\rrbracket$;
		\item $Gr_{\m}A:=\bigoplus\limits_{n\geq 0}\m^n/\m^{n+1}\cong \C[\ul{t_1},\dots,\ul{t_m},\ul{\xi_1},\dots,\ul{\xi_n}]$, where $\ul{(\cdot)}:\m\to\m/\m^2$ is the natural projection;
		\item $\Omega_{X,x}=\Omega_{A/\C}$ is free over $A$;
		\item $\Spec A\to \C$ is a formally smooth morphism;
		\item $\ol{A}=A/(A_{\ol{1}})$ is a regular local ring, and $A\cong \ol{A}[\xi_1,\dots,\xi_n]$;
		\item there exists an affine neighborhood $U=\Spec B$ of $x$ such that $\ol{B}=B/(B_{\ol{1}})$ is regular and $B\cong \Lambda^{\bullet}\ol{B}^{\oplus n}$;
		\item $T_xX=TC_xX$;
		\item the natural map $\TT_{X,x}\to T_xX$ is surjective;
		\item $A$ is a regular local superalgebra;
		\item $A$ is a graded integral superdomain such that $\ol{A}$ is a regular local ring;
	\end{enumerate}
\end{thm}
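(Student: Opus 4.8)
The plan is to prove the ten conditions mutually equivalent by organizing everything around the most concrete one, the ``split'' condition~(5), and to route back to (5) (or to the power-series condition~(1)) whenever convenient. I would group the conditions as: the \emph{formal} ones $(1),(2),(4),(7),(9)$, equivalent to one another by routine (super-)commutative algebra; the \emph{structural} ones $(5),(6),(10)$; and the pair $(3),(8)$, which tie the tangent sheaf and the module of differentials to the rest. Throughout one uses freely that $\ol A:=A/(A_{\ol{1}})$ is Noetherian local with residue field $\C$, and that odd elements square to zero, so that $J:=(A_{\ol{1}})$ is a nilpotent ideal of $A$ with $A/J=\ol A$.

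For the formal group: $(2)\Leftrightarrow(7)$ is immediate, since $TC_xX\hookrightarrow T_xX$ is cut out by the kernel of the canonical surjection $S^\bullet(\m/\m^2)\twoheadrightarrow Gr_{\m}A$, an isomorphism precisely when the two spaces coincide. $(1)\Leftrightarrow(2)$ follows by passing to associated graded ($Gr_{\m}\widehat A=Gr_{\m}A$, then lifting generators and using completeness to produce a surjection $\C\llbracket t_1,\dots,\xi_n\rrbracket\twoheadrightarrow\widehat A$ that is an isomorphism on associated gradeds, hence an isomorphism). $(1)\Leftrightarrow(4)$ is the super analogue of the Cohen-type fact that a complete Noetherian local $\C$-superalgebra with residue field $\C$ is formally smooth over $\C$ iff it is a formal power series superalgebra, together with the fact that formal smoothness over $\C$ is detected after $\m$-adic completion. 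Finally $(9)\Leftrightarrow(2)$ uses the super Koszul computation (an odd element is $A$-regular precisely when multiplication by it, a differential of square zero, is acyclic): generation of $\m$ by an $A$-regular sequence makes $Gr_{\m}A$ the free supercommutative algebra on its image, and conversely $(t_1,\dots,t_m,\xi_1,\dots,\xi_n)$ is a regular sequence in $\ol A[\xi_1,\dots,\xi_n]$ once $\ol A$ is regular (the $t_i$ being regular in $\ol A$, hence in the finite free extension $\ol A[\xi]$, and the $\xi_j$ being regular in the residual $\Lambda^\bullet\C^n$).

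For the structural group and the descent back to~(5): $(5)\Leftrightarrow(6)$ is localization in one direction and, in the other, spreading out the regular local ring $\ol A$ and the freeness of the conormal sheaf to an affine neighbourhood, then upgrading the stalkwise splitting using that $B$ is finite free over $\ol B$ of rank $2^n$. $(5)\Leftrightarrow(10)$ uses that $\ol A[\xi_1,\dots,\xi_n]$ is an integral superdomain when $\ol A$ is a domain (zero divisors are then exactly the ideal $(\xi_1,\dots,\xi_n)=(A_{\ol{1}})$), and conversely that for a graded $A\cong\Lambda^\bullet_{\ol A}\NN$ with $\NN:=\NN_{X,x}$ finitely generated over the regular local $\ol A$, the integral-superdomain condition forces $\NN$ to be free of rank $n$: torsion or excess generators in $\NN$ make a higher exterior power $\Lambda^r\NN$ nonzero torsion, producing an element of $\ol A\setminus\{0\}$ annihilating a nonzero odd element. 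Finally $(1)\Rightarrow(5)$ by descent: $\widehat A\cong\C\llbracket t_1,\dots,\xi_n\rrbracket$ gives $\widehat{\ol A}\cong\C\llbracket t_1,\dots,t_m\rrbracket$, so $\ol A$ is regular local, and faithful flatness of completion together with nilpotence of $J$ shows $J/J^2$ is $\ol A$-free of rank $(0|n)$ and that the natural surjection $\Lambda^\bullet_{\ol A}(J/J^2)\to A$, a map of finite $\ol A$-modules that is an isomorphism on completions, is an isomorphism.

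The remaining link, and the genuine obstacle, is $(3)\Rightarrow(5)$. First, $(3)\Leftrightarrow(8)$ is elementary: from a minimal presentation $A^{m|n}\twoheadrightarrow\Omega_{X,x}$ with kernel $K\subseteq\m A^{m|n}$ one identifies $\TT_{X,x}=\Hom_A(\Omega_{X,x},A)$ with $\{\phi\in A^{m|n}:\phi(K)=0\}$ and the evaluation $\TT_{X,x}\to T_xX$ with the composite $\{\phi:\phi(K)=0\}\hookrightarrow A^{m|n}\to A^{m|n}\otimes_A\C$, which by Nakayama is onto iff $\{\phi:\phi(K)=0\}=A^{m|n}$, i.e.\ iff every $A$-linear map $A^{m|n}\to A$ kills $K$, i.e.\ iff $K=0$, i.e.\ iff $\Omega_{X,x}$ is free. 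From $(3)$ one then gets that $\ol A$ is regular: \cref{generic freeness} pins the generic rank of $\Omega_X$ as $(\dim X_0\,|\,n)$, so $\Omega_{X,x}$ free forces $m=\dim X_0$, forces the image of $J/J^2$ in the conormal sequence $J/J^2\to\Omega_{X,x}\otimes_A\ol A\to\Omega_{\ol A/\C}\to 0$ to be exactly the odd free summand, and hence forces $\Omega_{\ol A/\C}$ to be $\ol A$-free of rank $\dim X_0$, so $\ol A$ is a regular local $\C$-algebra. It remains to split $A\to\ol A$ and identify $A$ with $\ol A[\xi_1,\dots,\xi_n]$: here I would use that $\ol A$, being regular local over a field of characteristic zero, is formally smooth over $\C$, so the identity of $\ol A$ lifts successively along the square-zero surjections $A/J^{k+1}\to A/J^k$, and nilpotence of $J$ turns this into an honest section $\ol A\hookrightarrow A$, after which freeness of $\Omega_{X,x}$ and a rank count make the resulting surjection $\ol A[\xi_1,\dots,\xi_n]\to A$ an isomorphism; alternatively, once $\ol A$ is known regular one may invoke the cohomological ``smooth affine supervarieties are graded'' argument of \cite{voronov1990elements}. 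The main obstacle is precisely this step: the classical part (regularity of $\ol A$) is easy, but ruling out non-split odd thickenings --- carrying out the induction on the exterior filtration against the vanishing of lifting obstructions afforded by smoothness of $\ol A$ --- is the technical heart; once it is in place, (5) functions as a hub and every remaining implication is routine.
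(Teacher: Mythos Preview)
Your approach is correct and covers all the implications, but it differs substantially from the paper's. The paper outsources several equivalences to the literature --- $(1)\Leftrightarrow(2)$ to \cite{fioresi2008smoothness}, $(3)\Leftrightarrow(4)$ to \cite{kapranov2011supersymmetry}, $(5)\Leftrightarrow(9)$ to \cite{schmitt1989regular} --- whereas you sketch self-contained arguments (super-Koszul for $(9)\Leftrightarrow(2)$, a direct Cohen-type statement for $(1)\Leftrightarrow(4)$). More interestingly, the paper's route out of $(8)$ is $(8)\Rightarrow(7)$: if the tangent cone were a proper subvariety, its ideal in $Gr_\m A$ would be preserved by the degree $-1$ derivations induced from $\TT_{X,x}$, but surjectivity onto $T_xX$ makes those include all coordinate derivations, which preserve no nontrivial graded ideal. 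Your $(3)\Leftrightarrow(8)$ via the minimal presentation and Nakayama is cleaner and avoids this detour. The most significant divergence is in producing the splitting $\ol A\hookrightarrow A$: the paper obtains it from $(1)$ (together with $(3)$) by using the global odd derivations $\partial_{\xi_i}$ to strip the odd tail off a lift $\widetilde f$ of $f\in\ol A$ inside the completion, thereby showing each graded piece lies in $A$; you instead invoke formal smoothness of the regular local $\C$-algebra $\ol A$ to lift the identity through the nilpotent square-zero filtration $A/J^{k+1}\to A/J^k$. Both work; yours is arguably more conceptual, the paper's more explicit.

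One point to tighten: in your $(1)\Rightarrow(5)$ you speak of ``the natural surjection $\Lambda^\bullet_{\ol A}(J/J^2)\to A$'', but there is no such natural map --- $A$ is not an $\ol A$-module until you have chosen a section $\ol A\to A$. What is natural is the map to $Gr_J A$. To get the map to $A$ you must first produce the section, and for that you need exactly the formal-smoothness lifting argument you spell out later under $(3)\Rightarrow(5)$. Once you insert that step (or simply route $(1)\Rightarrow(5)$ through $(1)\Rightarrow(3)\Rightarrow(5)$), the argument is complete.
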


\begin{proof}	
	The equivalence $(1)\iff(2)$ is proven in \cite{fioresi2008smoothness}, $(2)\iff(7)$ is clear, $(3)\iff(4)$ is proven in \cite{kapranov2011supersymmetry}, and $(5)\iff(9)$ is proven in \cite{schmitt1989regular}.  The equivalence (5)$\iff$(10) follows from the following commutative algebra statement (which is proven in the author's PhD thesis): if $M$ is a finitely generated module over a local Noetherian integral domain, then $M$ is free if and only if every exterior power of $M$ is torsion-free.
	
	For $(1)\implies (3)$, we have that $\m/\m^2\cong\widehat{\m}/\widehat{\m}^2$ is $(m|n)$-dimensional, so by Nakayama's lemma $\Omega_{A/\C}$ is generated by $(m|n)$ elements.  Localizing $A$ at the generic point, we obtain a superalgebra $F$ which by our assumptions and the Cohen structure theorem is isomorphic to $\ol{F}[\xi_1,\dots,\xi_n]$, where $\ol{F}$ is the fraction field of $\ol{A}$.  Hence by \cref{generic freeness} $\Omega_{F/\C}$, which is the localization of $\Omega_{A/\C}$ at the generic point, is free of rank $(m|n)$.  It follows that $\Omega_{A/\C}$ must itself be free of rank $(m|n)$.
	
	For $(3)\implies(8)$, $dt_1,\dots,dt_m,d\xi_1,\dots,d\xi_n$ form a basis of $\Omega_{A/\C}$.  Then 
	\[
	\TT_{X,x}=\ul{\Hom}_A(\Omega_{A/\C},A)
	\]
	will be free with basis $\d_{t_1},\dots,\d_{t_m},\d_{\xi_1},\dots,\d_{\xi_n}$ and these derivations map to a basis of $T_xX$, namely the dual basis of $\ul{t_1},\dots,\ul{t_m},$ $\ul{\xi_1},\dots,\ul{\xi_n}\in\m/\m^2$.
	
	$(8)\implies (7)$: If $TC_xX\neq T_xX$, then the vanishing ideal of $TC_xX$ must be preserved by all derivations from $\TT_{X,x}$.  By our assumption, we get all coordinate derivations from the derivations of $\TT_{X,x}$, so no such non-trivial ideals exist. 
	
	For $(5)\iff(6)$, the backward direction follows from localizing. Conversely, the isomorphism $\OO_{X,x}\to\ol{A}[\xi_1,\dots,\xi_n]$ may be extended to a morphism of sheaves $\OO_X\to\operatorname{gr}X$ on a small enough affine open of $x$.  This morphism will be an isomorphism of stalks at $x$, and so using Noetherian and coherent properties, we get an isomorphism in an open neighborhood of $x$.
	
	The implication $(5)\implies(1)$ is clear.  
	
	Now we assume $(1)$, and use $(3)$ (which we have so far shown is equivalent to (1)) to prove $(5)$.  First, (1) implies that $\ol{A}$ is regular.  As noted previously, by (3) we know that $A$ has derivations $\d_{t_1},\dots,\d_{t_m},\d_{\xi_1},\dots,\d_{\xi_n}$.  These derivations extend canonically to $\widehat{A}$ as the usual coordinate derivations, and these derivations preserve $A$ as a subalgebra.  We have the following diagram:
	\[
	\xymatrix{A\ar[r]\ar[d]^{\pi} & \C\llbracket t_1,\dots,t_m,\xi_1,\dots,\xi_n\rrbracket\ar[d]^{\widehat{\pi}}\\
		\ol{A} \ar[r] & \C\llbracket t_1,\dots,t_m\rrbracket}
	\]
	where $\pi$ is the natural quotient map.  To construct a splitting $\ol{A}\to A$, we observe that $\widehat{\pi}$ has a natural splitting $\widehat{s}$ sending $t_i$ to $t_i$.   We would like to show that $\widehat{s}(\ol{A})$ lies in the image of $A$ in the completion.
	
	Let $f\in \ol{A}$, thought of as a power series.  Then we may lift $f$ to $\widetilde{f}\in A_{\ol{0}}$.  The power series expansion of $\widetilde{f}$ will then be 
	\[
	\widetilde{f}=f+\sum\limits_{I\neq\emptyset} f_I\xi_I\in A
	\]
	where $\xi_I=\xi_{i_1}\cdots\xi_{i_k}$ if $I=\{i_1,\dots,i_k\}$, and $f_I\in \C\llbracket t_1,\dots,t_m\rrbracket$.  
	Using the derivations $\d_{\xi_i}$ for varying $I$, we may show that each function $f_I$ lies in $A$, and so $f$ itself lies in $A$.  Therefore we have our splitting, and now it follows that $A\cong\ol{A}[\xi_1,\dots,\xi_n]$.  
\end{proof}

\bibliographystyle{amsalpha}
\bibliography{bibliography}

\textsc{\footnotesize Dept. of Mathematics, Ben Gurion University, Beer-Sheva,	Israel} 

\textit{\footnotesize Email address:} \texttt{\footnotesize xandersherm@gmail.com}

\end{document}